\documentclass[a4paper,12pt]{amsart}
\usepackage[utf8]{inputenc}
\usepackage{amsfonts,amsthm,amsmath}
\usepackage{babel}
\usepackage{tikz-cd}

\usepackage[cmtip,all]{xy}
\xyoption{arc}

\usepackage{pgf,tikz}
\usepackage{pgfplots}
\usetikzlibrary{arrows}

\usepackage{graphicx}
\usepackage{subfigure}
\usepackage{xcolor}
\usepackage{caption}
\usepackage{manfnt}

\usepackage{url}
\usepackage{hyperref}
\usepackage{microtype}

\newcommand{\R}{\mathbb{R}}

\renewcommand{\H}{\mathbb{H}}
\newcommand{\D}{\mathbb{D}}

\renewcommand{\L}{\mathcal{L}}
\newcommand{\cH}{\mathcal{H}}

\newcommand{\B}{\mathcal{B}}
\newcommand{\A}{\mathcal{A}}

\newcommand{\M}{\mathcal{M}}

\newcommand{\X}{\mathcal{X}}

\newcommand{\E}{\mathcal{E}}

\renewcommand{\circ}{\raisebox{-1ex}{$\mathaccent23{\phantom{x}}$}}
\newcommand{\bs}{\backslash}

\newcommand{\isomH}{{\rm SO}^+(n,1)}
\newcommand{\SO}[1]{{\rm SO}(#1)}

\newcommand{\Homeo}{{\rm Homeo}}
\newcommand{\Diffeo}{{\rm Diffeo}}
\newcommand{\Isom}{{\rm Isom}}

\newcommand{\norm}[1]{\parallel \!#1 \! \parallel}

\newcommand{\group}[2]{\raisebox{-2ex} 
{\begin{picture}(28,24)(0,0)
\put(4,15){\vector(1,0){20}}
\put(4,10){\vector(1,0){20}}
\put(10,4){${\scriptstyle #1}$}
\put(10,18){${\scriptstyle #2}$}
\end{picture}}}

\usepackage[title]{appendix}

\newtheorem{theorem}{Theorem}
\newtheorem{lemma}{Lemma}
\newtheorem{corollary}{Corollary}

\newtheorem{proposition}{Proposition}
\newtheorem*{conjecture*}{Conjecture}
\newtheorem{question}{Question}
\newtheorem*{question*}{Question}

\newtheorem{itheorem}{Theorem}

\theoremstyle{definition}
\newtheorem{definition}{Definition}

\newtheorem{remark}{Remark}

\title{Mostow rigidity for skew solenoidal manifolds}
\author[F. Alcalde]{Fernando Alcalde Cuesta} 
\address{Santiago de Compostela, Spain.}
\email{fernando.alcaldecuesta@gmail.com}
\author[M. Mart\'{\i}nez]{Matilde Mart\'{\i}nez}
\address{Instituto de Matem\'atica y Estad\'{\i}stica Rafael Laguardia, 
         Facultad de Ingenier\'{\i}a, Universidad de la Rep\'ublica, 
         J.Herrera y Reissig 565, C.P.11300 Montevideo, Uruguay.}
\email{matildem@fing.edu.uy}
\author[A. Verjovsky]{Alberto Verjovsky}
\address{Instituto de Matem\'aticas, Universidad Nacional Aut\'onoma de M\'exico, Apartado
Postal 273, Admon. de correos \#3, C.P. 62251 Cuernavaca, Morelos, M\'exico.}
\email{alberto@matcuer.unam.mx}

\subjclass[2020]{37D40, 53C24}
\thanks{M.M. and A.V. would like to acknowledge Grupo CSIC 883174 (UdelaR, Uruguay) and Proyecto PAPIIT IN103324 (DGAPA, UNAM, México) for their financial support. M.M. also thanks the University of Santiago de Compostela and the Mathematics Research Group GI-2136 for their hospitality.}

\begin{document}

\begin{abstract}
We prove a Mostow rigidity theorem for foliated bundles over closed hyperbolic manifolds of dimension $n \geq 3$ endowed with a completely invariant measure of full support. These include solenoidal manifolds obtained as inverse limits of directed systems of finite coverings of closed hyperbolic manifolds. This theorem then extends to skew solenoidal manifolds for which the action of the holonomy group is twisted by means of a cocycle.
\end{abstract}

\maketitle

\section{Introduction}
\label{Sintro} 

Mostow rigidity theorem is one of the most beautiful results of the late 20th century at the crossroads between geometry and topology. Assume that two closed Riemannian manifolds with constant negative curvature and dimension $n \geq 3$ have isomorphic fundamental groups. Then both manifolds are isometric. 

Other than the original proof by G. D. Mostow \cite{Mostow}, there are several proofs including that of G. Prasad in finite volume \cite{Prasad}, the homological approach by M. Gromov \cite{Gromov}, and the geometric approach by G. Besson, G. Courtois and S. Gallot \cite{Besson-Courtois-Gallot}. Here we will adhere to the clear and transparent vision of W. P. Thurston in his notes about the geometry and topology of 3-manifolds \cite{ThurstonNewBook}. The surveys of M. Bourdon \cite{Bourdon,BourdonMostow} and A. L\"ucker \cite{Lucker} will be also very useful for adapting Mostow's rigidity theorem to foliated category.  

All these proofs share the same central point, namely the construction of a boundary map. Combining topological, geometric, analytic and ergodic arguments, it can be proven that this boundary map is conformal and, therefore, extends to an isometry between the manifolds. 
\medskip 

Our aim in this paper is to extend Mostow rigidity from $n$-manifolds to \emph{solenoidal $n$-manifolds}, i.e second countable locally compact Hausdorff spaces which are locally modeled on the product of a $n$-dimensional open disk and a Cantor set. For a comprehensive study of solenoidal manifolds, see \cite{VerjovskySullivan}. Inverse limits of directed systems of finite coverings of closed $n$-manifolds are the more natural examples of solenoids, named \emph{McCord solenoids}, when all coverings are regular. Our first theorem establishes Mostow rigidity in the slightly broader context of suspensions: 

\begin{itheorem}[Mostow rigidity for hyperbolic suspensions] \label{thm:intro1}
Let $M_1$ and $M_2$ be two connected closed hyperbolic manifolds of dimension $n\geq 3$ with fundamental groups $\Gamma_1$ and $\Gamma_2$.  Let $M_{\rho_1}$ and $M_{\rho_2}$ be the suspension of two representations $\rho_1 : \Gamma_1 \to G$ and $\rho_2 : \Gamma_2 \to G$ into a homeomorphism group $G$ acting on a compact Hausdorff topological space $F$. Let $g_1$ and $g_2$ be two leafwise Riemannian metrics on $M_{\rho_1}$ and $M_{\rho_2}$.
Assume $\rho_1(\Gamma_1)$ preserves an ergodic probability measure on $F$ with full support. 
If $h : M_{\rho_1} \to M_{\rho_2}$ is a leafwise homotopy equivalence, then there is a leafwise isometry $I : M_{\rho_1} \to M_{\rho_2}$ integrable homotopic to $h$.
\end{itheorem}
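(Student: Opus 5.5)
The plan is to run Thurston's proof of Mostow rigidity leaf by leaf, with the metrics $g_1$ and $g_2$ themselves and not the metrics pulled back from $M_1$ and $M_2$. Everything is done continuously in the transverse parameter, and the invariant measure on $F$ replaces the finite volume of $M_i$ in the ergodic step. I read ``leafwise Riemannian metric'' in the sense the statement needs: on each leaf, $g_i$ is a complete metric of constant curvature $-1$, varying continuously in the leafwise $C^\infty$ topology as the leaf varies. Without curvature $-1$ no isometry could be expected, so this is the implicit hypothesis.

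\textbf{Step 1 (leafwise geometry).} Each leaf $L$ of $M_{\rho_i}$ is a covering of $M_i$. Since $M_{\rho_i}$ is compact, $g_i$ and the pulled-back hyperbolic metric $g_i^0$ are uniformly bi-Lipschitz on leaves. Consequently every leaf of $(M_{\rho_i},g_i)$ has universal cover isometric to $\H^n$, and its fundamental group is a subgroup of $\Gamma_i$ acting cocompactly on the corresponding cover of $M_i$. I will choose developing maps $D_x:\widetilde{L_x}\to \H^n$ for $(L_x,g_i)$ depending continuously on $x$ in a transversal; this is possible by local uniqueness of developing maps up to $\Isom(\H^n)$. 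Each $D_x$ is a quasi-isometry for the $g_i^0$-structure, with uniform constants. Hence the boundaries at infinity for $g_i$ and for $g_i^0$ are canonically identified with $S^{n-1}$ by a quasiconformal map that varies continuously in $x$.

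\textbf{Step 2 (boundary maps).} Lift $h$ to the universal covers of the leaves. A leafwise homotopy equivalence of compact foliated spaces is leafwise a quasi-isometry with uniform constants: it is uniformly Lipschitz with a uniformly Lipschitz homotopy inverse. By the Morse lemma, $h$ therefore induces boundary maps $\partial h_x:S^{n-1}\to S^{n-1}$. These are quasiconformal with uniform dilatation, depend continuously on $x$, and satisfy the equivariance $\partial h_{\gamma x}=\partial h_x$ twisted by the holonomy, i.e.\ by $\rho_1$ and the induced map $\Gamma_1\to\Gamma_2$.

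\textbf{Step 3 (conformality, main obstacle).} By Rademacher--Stepanov each $\partial h_x$ is differentiable almost everywhere with a measurable field of ellipses $E_x(\xi)$. The eccentricity $k(x,\xi)$ is invariant under the diagonal action of $\Gamma_1$ on $F\times S^{n-1}$ and under the $g_1$-geodesic flow on the foliated unit tangent bundle. The measure $\mu\times\mathrm{Leb}$ is quasi-invariant there, and its flow version, $\mu$ times Liouville, is invariant. I will use ergodicity of $\mu$ together with the Hopf argument for the leafwise geodesic flow of $g_1$ to get ergodicity of the flow for $\mu\times$Liouville. This step is the main obstacle: Hopf's argument must be run in a skew product over an ergodic action, and the stable and unstable leafwise horospheres of $g_1$ must be controlled uniformly in $x$. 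I will first try to reduce this to Mautner's phenomenon for $\Isom(\H^n)$ acting on $(\Isom(\H^n)\times F)/\Gamma_1$, which $\mu$-ergodicity makes ergodic via Moore's theorem. Once ergodicity holds, $k$ is a.e.\ constant. Invariance of the ellipse field under the $n\ge3$ stabilizer $\SO{n-1}$ of a point, acting transitively on directions, then forces $k=1$. Full support of $\mu$ and continuity in $x$ upgrade ``$\mu$-a.e.\ $x$'' to every $x$. Hence every $\partial h_x$ is a Möbius transformation.

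\textbf{Step 4 (the isometry).} For each $x$, the Möbius map $\partial h_x$, conjugated by the boundary extensions of the $g_1$- and $g_2$-developing maps of Step 1, extends uniquely to an isometry $\widetilde I_x:(\widetilde{L_x},g_1)\to(\widetilde{L_{h(x)}},g_2)$. Uniqueness gives equivariance, so $\widetilde I_x$ descends to a leafwise isometry $I$ for exactly $g_1$ and $g_2$. Continuity of $\partial h_x$ and of the developing maps in $x$ makes $I$ a homeomorphism of the foliated spaces. Finally, $h$ and $I$ are at uniformly bounded leafwise distance, since they share boundary maps. The leafwise geodesic homotopy $t\mapsto$ point at fraction $t$ on the $g_2$-geodesic from $h(y)$ to $I(y)$ is then well defined, continuous and leaf-preserving. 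This gives the integrable homotopy between $h$ and $I$.
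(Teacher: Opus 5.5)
Your overall architecture is sound. It is essentially the paper's alternative route via global uniformisation: developing maps varying continuously in the transverse parameter, as in the paper's remark on normal tubes. You obtain ergodicity of the foliated geodesic flow, as the paper does, from Furstenberg's correspondence and Moore's theorem. Skipping the reduction to a bundle morphism and closing with an explicit leafwise geodesic homotopy are both fine.

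The genuine gap is the last sentence of Step 3: the passage from ``the eccentricity $k$ is a.e.\ constant'' to $k=1$, which is the heart of the argument. You assert that the ellipse field is invariant under a stabilizer $\SO{n-1}$, but nothing you have proved produces such an invariance. The field $E_x(\xi)$ is only $\Gamma_1$-invariant. The ergodicity you establish is that of the geodesic flow on the foliated unit tangent bundle $\Gamma_1\bs(\isomH/\SO{n-1}\times F)$, a space in which $\SO{n-1}$ has already been quotiented out. That statement only controls $\Gamma_1$-invariant functions of boundary points, such as the eccentricity. It says nothing about how the axes of $E_x(\xi)$ are oriented, and a.e.\ constancy of $k$ alone does not force $k=1$.

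What is missing is ergodicity on a space that records directions at boundary points. The paper applies Moore's theorem to the noncompact subgroup $\SO{n-2}AU$ acting on $\Gamma_1\bs(\isomH\times F)$. This gives ergodicity of $\Gamma_1$ on the foliated unit tangent bundle $\isomH/\SO{n-2}AU\times F$ of the boundary. The $\Gamma_1$-invariant stretch function $v\mapsto\|(\partial\tilde H)_{*z}(v)\|/\|(\partial\tilde H)_{*z}\|$ is then a.e.\ constant. Since it is continuous in $v$ at each point of differentiability and has maximum $1$, it is identically $1$, so the differential is conformal a.e. A quasiconformal map with a.e.\ conformal differential is then M\"obius (Bourdon's Theorem 1.5), a step you also leave implicit.

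Your $\SO{n-1}$ idea can be repaired in an equivalent way. Lift the conformal class of the ellipse field to the foliated frame bundle by $\Phi(u,y)=(Du_{\xi_0})^{-1}E_y(u\xi_0)$, where $\xi_0$ is the forward endpoint of the reference geodesic. Then:
\begin{itemize}
\item $\Phi$ is $\Gamma_1$-invariant;
\item $\Phi$ is invariant under $A$, which acts on $T_{\xi_0}S^{n-1}$ by homotheties;
\item $\Phi(um,y)=m^{-1}\cdot\Phi(u,y)$ for $m\in\SO{n-1}$.
\end{itemize}
Ergodicity of $A$ on $\Gamma_1\bs(\isomH\times F)$, again by Moore, makes $\Phi$ a.e.\ equal to a fixed ellipse $E_0$. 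This $E_0$ is then $\SO{n-1}$-invariant, hence round because $n\ge 3$. Either way, the ergodic input must live on the frame bundle (or on the unit tangent bundle of the boundary) and be applied to directional data, not to $k$ on $T^1$.

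A minor point in Step 2: a continuous leafwise homotopy equivalence is not uniformly Lipschitz as such. You must first homotope it to a leafwise smooth map, as the paper does, or use coarse Lipschitz bounds from compactness.
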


If $F$ is a Cantor set and $G$ is the isometry group of $F$, this framework is equivalent to that of the inverse limits described above, and if $F=G$ is a Cantor group acting on itself by left translation, it is equivalent to that of McCord solenoids. The main difference betwwen both situations is related to the fact that any McCord solenoid $\X$ is \emph{topologically homogeneous}, i.e. for any two points $x,y \in \mathcal{X}$, there is a homeomorphism $f : \mathcal{X} \to \mathcal{X}$ such that $f(x) = y$ (see \cite{McCord}). Therefore, the above theorem has a special case the following rigidity theorem:  

\begin{itheorem}[Mostow rigidity for inverse limit of hyperbolic manifolds] \label{thm:intro2}
 Let $\X_1$ and $\X_2$ be two $n$-dimensional hyperbolic solenoidal manifolds obtained as the inverse limit of towers of finite coverings of a two connected closed hyperbolic manifolds $M_1$ and $M_2$ of dimension $n \geq 3$.  
If $h : \X_1 \to \X_2$ is a homotopy equivalence, then there exists a leafwise isometry $I : \X_1 \to \X_2$ integrable homotopic to $h$. 
\end{itheorem}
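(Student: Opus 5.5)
We reduce Theorem~\ref{thm:intro2} to Theorem~\ref{thm:intro1}. First we realize each inverse limit as a suspension. Let $\X_i = \varprojlim M_i^{(k)}$, where $\cdots \to M_i^{(k+1)} \to M_i^{(k)} \to \cdots \to M_i$ is a tower of finite coverings. Set $\Gamma_i^{(k)} = \pi_1(M_i^{(k)}) \subset \Gamma_i$, a decreasing chain of finite index subgroups. Let $F_i = \varprojlim \Gamma_i/\Gamma_i^{(k)}$ be the inverse limit of the finite coset spaces; it is a Cantor set when the chain does not stabilize, and a finite set otherwise. The group $\Gamma_i$ acts on $F_i$ by left multiplication on cosets, which gives $\rho_i : \Gamma_i \to \Homeo(F_i)$. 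This action is isometric for the natural ultrametric, so $G$ may be taken to be the isometry group of $F_i$, as noted after Theorem~\ref{thm:intro1}.

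Next we check the standard identification $\X_i \cong M_{\rho_i} = (\widetilde{M}_i \times F_i)/\Gamma_i$. At each finite level, $(\widetilde{M}_i \times \Gamma_i/\Gamma_i^{(k)})/\Gamma_i \cong \widetilde{M}_i/\Gamma_i^{(k)} = M_i^{(k)}$. These identifications are compatible with the projections, and quotients by the proper cocompact $\Gamma_i$-action commute with the inverse limit. The identification maps leaves onto path components, which are the leaves of $\X_i$.

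We then verify the hypotheses of Theorem~\ref{thm:intro1}. The measure on $F_i$ is the inverse limit of the uniform (counting) probability measures on $\Gamma_i/\Gamma_i^{(k)}$. These are compatible because each fibre of $\Gamma_i/\Gamma_i^{(k+1)} \to \Gamma_i/\Gamma_i^{(k)}$ has constant cardinality $[\Gamma_i^{(k)}:\Gamma_i^{(k+1)}]$. The resulting measure $\mu_i$ is $\Gamma_i$-invariant and gives positive mass to every cylinder set, so it has full support. It is ergodic, and in fact uniquely ergodic, because $\Gamma_i$ acts transitively on each level: an invariant function that is measurable with respect to level $k$ is constant, and the levels generate the $\sigma$-algebra. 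The leafwise metrics $g_i$ are taken to be the hyperbolic metrics pulled back from $M_i$ along the projections $\X_i \to M_i$.

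The step I expect to be the main obstacle is matching the notions of homotopy equivalence. Theorem~\ref{thm:intro1} needs a \emph{leafwise} homotopy equivalence, while Theorem~\ref{thm:intro2} only assumes a homotopy equivalence $h : \X_1 \to \X_2$. A continuous map sends path components to path components, so $h$ and its homotopy inverse, together with the homotopies, preserve leaves. The remaining point is leafwise regularity: continuity along leaves in the leaf topology, and transverse continuity of the leafwise restriction. The leaf topology is finer than the subspace topology. However, a continuous map on a solenoid is uniformly continuous, and leaves are locally plaque-wise products of a disk with a Cantor set. Hence a small image of a connected plaque lies in a single plaque of the target, and this gives continuity in the leaf topology. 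If needed, I would first approximate $h$ by a map that is smooth along leaves, using a leafwise convolution over plaques, which is homotopic through leaf-preserving maps. I would then apply Theorem~\ref{thm:intro1} to obtain the leafwise isometry $I$ integrably homotopic to $h$.

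The topological homogeneity of McCord solenoids is not needed for this reduction. It only makes the choice of base point irrelevant, which is presumably why the introduction mentions it.
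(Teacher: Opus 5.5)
Your proposal follows the paper's route. The paper proves Theorem~\ref{thm:intro2} by reducing it to Theorem~\ref{thm:intro1} (Remarks~\ref{rem:riemannian} and~\ref{rem:reducingthm2tothm1}). It realizes the inverse limit as a suspension whose holonomy acts by isometries of the ultrametric $2^{-N}$ on the fibre, notes that any homotopy equivalence is leafwise because leaves are path components, and checks the measure hypothesis.

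The one step you do differently is the measure. The paper separates the McCord case, where it uses Haar measure on the profinite completion, from the general case, where it quotes Sacksteder for an invariant probability measure of full support and Matsumoto for unique ergodicity. You instead build $\mu_i$ by hand as the inverse limit of normalized counting measures on $\Gamma_i/\Gamma_i^{(k)}$. This is self-contained, treats regular and non-regular towers uniformly, and gives exactly Haar measure in the regular case.

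Your ergodicity sketch leaves implicit why constancy of level-$k$-measurable invariant functions propagates to all invariant functions. The quickest way to close this is as follows. Any $\Gamma_i$-invariant probability measure gives equal mass to the level-$k$ cylinders, since $\Gamma_i$ permutes them transitively, so it coincides with $\mu_i$. A uniquely ergodic measure is ergodic. Your treatment of the leafwise issue is more careful than the paper's one-line remark: the homotopies are integral because their tracks are paths and hence stay in leaves, and continuity in the leaf topology is automatic because the transversals are totally disconnected.

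One thing to correct: do not replace the metrics. A hyperbolic solenoidal manifold carries its own leafwise hyperbolic metric, which need not be pulled back from $M_i$. The isometry $I$ in the statement must be an isometry for those given metrics, and this generality is precisely what yields Corollary~\ref{cor:Teichmullerinvlimit}. Theorem~\ref{thm:intro1} places no restriction on $g_1$ and $g_2$, so simply apply it to the metrics $\X_1$ and $\X_2$ come with. With pulled-back metrics, the conclusion would essentially reduce to classical Mostow rigidity for $M_1, M_2$ combined with the conjugacy of the two actions.

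Similarly, Theorem~\ref{thm:intro1} is stated for a single pair $(F,G)$. You should therefore identify $F_1$ and $F_2$ with one Cantor set and take $G=\Homeo(F)$, rather than working with two different isometry groups. The paper passes over this bookkeeping as well.
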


The proof of both theorems is the core of this work (see Section~\ref{Ssuspension} and Section~\ref{Sproofth1}). As in the classical proof, the construction of the boundary map is an essential part of the proof, but the suspension hypothesis facilitates the construction of the boundaries at infinity, on which this map is defined. 
\medskip 

However, we can consider Lie and Riemannian solenoidal manifold obtained as solenoidal versions of Lie and Riemannian foliated manifolds from \cite{Molinobook}. Now the construction of a boundary at infinity (extending Thurston's universal circle \cite{Thurston2} to any dimension) is no longer evident. In fact this question will be studied in a forthcoming second part of this paper \cite{AMVMostowfoliations}. Here, to circumvent this difficulty, we first assume a homogeneity condition that actually implies the conditions of Theorem~\ref{thm:intro1} and Theorem~\ref{thm:intro2} (see Section~\ref{Shomogeneous}). We must observe that Lie solenoidal manifolds are always topologically homogeneous, thus homeomorphic to McCord solenoid according to a theorem by A. Clark and S. Hurder \cite{ClarkHurder}, but they are not always Lie-homogeneous (see Remark~\ref{rem:homogeneous}). In fact, we will show how to construct non-homogeneous Lie and Riemannian solenoidal manifolds from cocycles which are not untwisted. Due to their similarity to skew-products in Ergodic Theory, we call them 
\emph{skew solenoidal manifolds}. Fortunately, the existence of a global uniformisation will imply that the construction of the boundary at infinity and the boundary map is similar to the case of suspensions. This allows us to adapt the arguments of the proof of Theorem~\ref{thm:intro1} to this more general case: 

\begin{itheorem}[Mostow rigidity for hyperbolic skew solenoidal manifolds]  \label{thm:intro3}
Let $\Gamma_1$ and $\Gamma_2$ be two discrete groups, and $G$ be a Cantor group or the isometry group of a Cantor set.
Let $\X_1$ and $\X_2$ be two minimal skew hyperbolic solenoidal manifolds of dimension $n \geq 3$ obtained from two representations 
$$
\rho_1 : \Gamma_1 \to G \text{ and } \rho_2 : \Gamma_2 \to G
$$ 
and two cocycles
$$
\varphi_1 : \Gamma_1 \times G \to \isomH \text{ and } \varphi_2 : \Gamma_2 \times G \to \isomH.
$$
If $h : \X_1 \to \X_2$ is a homotopy equivalence, then there exists a leafwise isometry  $I  : \X_1 \to \X_2$  integrable homotopic to $h$.
\end{itheorem}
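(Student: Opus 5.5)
The plan is to reduce Theorem~\ref{thm:intro3} to the scheme used for Theorem~\ref{thm:intro1}, replacing the suspension structure by the global uniformisation that skew solenoidal manifolds carry.

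\textbf{Step 1: uniformisation.} Write $\X_i$ as the quotient of $\H^n \times G$ by the skew action
$$\gamma\cdot(x,g) = \bigl(\varphi_i(\gamma,g)\,x,\ \rho_i(\gamma)g\bigr).$$
Leaves are then quotients of $\H^n$. This gives a boundary at infinity
$$\partial\X_i = (\partial\H^n \times G)/\Gamma_i,$$
on which $\Gamma_i$ acts fibrewise by the Möbius maps $\varphi_i(\gamma,g)$. Minimality, together with $G$ being a Cantor group or the isometry group of a Cantor set, yields a $\rho_i(\Gamma_i)$-invariant ergodic probability of full support on $G$. I take it to be Haar measure, or its push-forward to the Cantor set, restricted to a minimal component. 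Choosing the product of this measure with leafwise volume gives a completely invariant measure, as in Theorem~\ref{thm:intro1}.

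\textbf{Step 2: lift and boundary map.} Lift $h$ and a leafwise homotopy inverse to maps
$$\tilde h : \H^n\times G \to \H^n \times G',$$
equivariant with respect to the skew actions and with a transverse part $G\to G'$ that intertwines $\rho_1$ and $\rho_2$. By compactness of $\X_i$ and the uniform leafwise geometry, $\tilde h$ restricted to each plaque $\H^n\times\{g\}$ is a quasi-isometry with constants independent of $g$. It therefore extends to a quasiconformal homeomorphism
$$f_g : \partial\H^n\to\partial\H^n,$$
depending continuously on $g$ and satisfying the twisted equivariance
$$f_{\rho_1(\gamma)g}\circ\varphi_1(\gamma,g) = \varphi_2(\gamma,\tilde h(g))\circ f_g.$$

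\textbf{Step 3: conformality (main obstacle).} Classically one differentiates $f$ almost everywhere, considers the measurable field of ellipsoids, and uses ergodicity of $\Gamma$ on $\partial\H^n$ to force the distortion to be constant, hence trivial. Here I would work on $\partial\H^n\times G$ with the product of Lebesgue measure and the invariant measure. The dilatation field $(\xi,g)\mapsto$ ellipsoid of $df_g(\xi)$ is invariant under the skew action. The needed ingredient is ergodicity of this action, equivalently ergodicity of the leafwise geodesic flow (or frame flow) of $\X_1$ for the measure from Step 1. I would prove this by the Hopf argument. Along leaves, stable and unstable manifolds come from the hyperbolic structure. Transversally, minimality together with ergodicity of $\rho_1$ glues the leafwise ergodic components. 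Invariant measurable functions are then constant on the $\Gamma_1$-saturation of a.e.\ fibre, and hence constant by ergodicity on $G$.

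Invariance of the ellipsoid field then forces it to be a.e.\ constant. For $n\ge3$, the usual Mostow argument applies: an invariant measurable conformal structure that is $\varphi_1$-equivariant must be the round one, using the $\SO{n-1}$ stabiliser action on directions. So each $f_g$ is $1$-quasiconformal, hence Möbius for a.e.\ $g$, and by continuity in $g$ for all $g$.

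\textbf{Step 4: isometry and homotopy.} Each Möbius map $f_g$ extends to an isometry $I_g$ of $\H^n$. The assignment $(x,g)\mapsto (I_g x,\tilde h(g))$ is equivariant and descends to a leafwise isometry $I:\X_1\to\X_2$. The straight-line homotopy between $\tilde h$ and $I$ on each plaque is equivariant, because $\tilde h$ and $I$ have bounded distance uniformly in $g$ (both induce the same boundary map). It also varies continuously in $g$, so it gives the required integrable homotopy from $h$ to $I$.
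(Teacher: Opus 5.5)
Your proposal follows the paper's route. The paper proves Theorem~\ref{thm:Mostowskew} by running the global-uniformisation variant of the proof of Theorem~\ref{thm:Mostowsuspension} sketched in Remarks~\ref{rem:tubes} and~\ref{rem:othertubes}: lift $h$ to $\H^n\times G$, get plaquewise quasi-isometries with uniform constants, a continuous uniformly quasiconformal boundary map $(\xi,g)\mapsto(f_g(\xi),k(g))$, conformality by ergodicity, and the isometric extension $(x,g)\mapsto(I_g x,k(g))$. These are your Steps~1--4.

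The genuine difference is how you obtain ergodicity. The paper uses Furstenberg's correspondence and Moore's theorem. The right $\isomH$-action on $\Gamma_1\bs(\isomH\times G)$ commutes with the skew action, and it is ergodic because $\rho_1(\Gamma_1)$ is ergodic on $G$ for Haar measure. Hence every non-compact closed subgroup acts ergodically. Taking $A$ gives ergodicity on pairs of boundary points, hence constant eccentricity. Taking $\SO{n-2}AU$ gives ergodicity of $\Gamma_1$ on the bundle of directions $\isomH/\SO{n-2}AU\times G$ over $\partial\H^n\times G$. That is what makes the normalised stretch function constant, i.e.\ it is exactly your directions step.

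Your Hopf argument on $T^1\X_1$ with Liouville times Haar measure is sound. As described, however, it only gives ergodicity of the geodesic flow, hence ergodicity on pairs. That yields constant eccentricity but not the directions step. You can close this in either of two ways:
\begin{itemize}
\item Upgrade to the frame flow. A forward Birkhoff average is invariant under the stable horospherical group $U$ and a.e.\ under the unstable one; these generate $\isomH$, and an $\isomH$-invariant function is just a $\rho_1(\Gamma_1)$-invariant function on $G$. This is the Mautner argument underlying Moore's theorem.
\item Run Thurston's pairs-of-points argument on invariant line or frame fields, fibre by fibre.
\end{itemize}
Moore's theorem delivers both ergodicity statements from the single hypothesis on $\rho_1$. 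The Hopf route is more geometric and, for the geodesic-flow part, does not use the homogeneous structure.

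One small correction to Step~2. The lift is not equivariant for an identification of $\Gamma_1$ with $\Gamma_2$; indeed $\varphi_2(\gamma,\cdot)$ is undefined for $\gamma\in\Gamma_1$. Write $k$ for the transverse part of $\tilde h$. On the plaque through $g$, the maps $\tilde h\circ\gamma$ and $\tilde h$ are two lifts of the same map into $\X_2$. So $\tilde h\circ\gamma=\theta(\gamma,g)\circ\tilde h$ there, for a unique $\theta(\gamma,g)\in\Gamma_2$ that is locally constant in $g$. Your relation should therefore read $f_{\rho_1(\gamma)g}\circ\varphi_1(\gamma,g)=\varphi_2(\theta(\gamma,g),k(g))\circ f_g$. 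Steps~3 and~4 only use that the outer maps are M\"obius: for invariance of the eccentricity, and for $I$ and the geodesic homotopy to descend to $\X_1\to\X_2$. So nothing else changes.
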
 

An extension of Mostow's rigidity theorem for Lie and Riemannian foliations by dense hyperbolic leaves of dimension $\geq 3$ will be proved in the aforementioned second part of this paper \cite{AMVMostowfoliations}. 
As we will explain in a final remark, the question of how general skew solenoidal manifolds are among Lie and Riemannian solenoidal manifolds will be treated in a third part of this paper. 
\medskip

%%NO ES CIERTO PORQUE UNA MEDIDA ARMÓNICA ERGODICA QUE NOS ES COMPLETAMENTE INVARIANTE SE LEVANTA EN UNA MEDIDA P-INVARIANTE SINGULAR CON RESPECTO A LIOUVILLE
%{\blue 
%Finally, although this paper focuses on solenoidal manifolds, the main theorem applies to any foliated manifold obtained by the suspension of a representation whose image respects an ergodic probability measure. As we shall see, the ergodic argument used in the proof does not always require the measure to be invariant. In some cases, it is sufficient for the holonomy to respect the class of a measure with full support. Thus, in Appendix~\ref{Approjectivebundles}, we shall prove that Mostow’s rigidity is also valid for a large class of flat projective bundles over closed hyperbolic manifolds of dimension $n \geq 3$.}

\section{Solenoidal manifolds}
\label{Ssolenoid}

\begin{definition} \label{def:solenoidalmanifold}
A second countable locally compact Hausdorff space $\X$ is a \emph{$n$-dimensional solenoidal manifold} if $\X$ is covered by open sets $U_i$ endowed with homeomorphisms $\varphi_i : U_i \to \D^n \times K$ where $\D^n$ is the unit open disk in $\R^n$ and $K$ is the Cantor set. Each distinguished open set $U_i$ splits into \emph{plaques} $\varphi_i^{-1}(\D^n  \times  \{y\})$ associated to the points of a \emph{local transversal} $T_i = \varphi_i^{-1}(\{0\} \times  K)$. If $U_i \cap U_j \neq \emptyset$, the change of charts
$$\varphi_{i} \circ \varphi_j^{-1} : \varphi_j(U_i \cap U_j) \to \varphi_i(U_i \cap U_j)$$
is given by
\begin{equation}
\label{ec:cambio_coor}
\varphi_{i} \circ \varphi_j^{-1}(x,y)
= (\varphi_{ij}(x,y),\gamma_{ij}(y)).
\end{equation}
Each map $\gamma_{ij}$ is a homeomorphism between open subsets of $T_j$ and $T_i$. The solenoidal manifold $\X$ is \emph{smooth} if the maps $\varphi_{ij}(-,y)$ are smooth diffeomorphisms depending continuously on $y$ in the smooth topology. All solenoidal manifolds considered here will be smooth and oriented. Additionally, we will assume that the cover $\mathcal{U} = \{U_i\}_{i\in I}$ satisfies the following conditions:

\begin{enumerate}
\renewcommand{\theenumi}{(A\arabic{enumi}}

\item $\mathcal{U}$ is locally finite, hence finite if $\X$ is compact,

\item each open set $U_i$ is a relatively compact,

\item each plaque of $U_i$ intersects at most one plaque of $U_j$.
\end{enumerate}

\noindent
The atlas $\A= \{(U_i,\varphi_i)\}_{i\in I}$ defines a \emph{lamination} or \emph{foliation} $\L$ of $\X$:  the plaques of $\A$ glue together to form maximal path connected $n$-manifolds called \emph{leaves}. In our context, they are the path connected components of $\X$. The disjoint union $T = \bigsqcup \, T_i$ is a \emph{complete transversal} that meets all the leaves. Each local transversal $T_i$ is a clopen set of $T$. 
%In fact, refining the foliated atlas of $\X$ if necessary, we can assume the equivalence relation defined by the plaques on $T$ is trivial, so $T$ is the Cantor set decomposed into clopen sets $T_i$. 
\end{definition}

If $\X$ is a $n$-dimensional solenoidal manifold, the \emph{tangent space} $T\X$ of $\X$ is a $2n$-dimensional smooth solenoidal manifold whose leaves are the tangent bundles of the leaves of $\X$. It is defined by extending the foliated charts of $\X$, in the same way as we define the tangent bundle of a manifold. Other objects from differential geometry can be defined, using the differentiable structure of the leaves. In the transverse direction, these objects are only continuous. 

\begin{definition}\label{def:metric}
If $\X$ is a solenoidal manifold, a \emph{Riemannian metric} $g$ on $\X$ is a positive definite metric tensor on $T\X$ that varies smoothly along the leaves of $\X$ and continuously in the smooth topology of $\X$.
The metric $g$ is \emph{hyperbolic} if all sectional curvatures are equal to -1. We will abusively denote $\X^1 = T^1\X$ the unit tangent bundle derived from the choice of $g$. 
\end{definition}

All solenoidal manifolds admit Riemannian metrics, since they can be defined in local charts and glued using partitions of unity. A solenoidal manifold endowed with a hyperbolic metric will be called a \emph{hyperbolic solenoidal manifold}. 
\medskip 

If $\X$ is endowed with a leafwise Riemannian metric, then $\X$ always admits an atlas $\A$ with the following additional properties (see \cite{ALM2011}): 

\begin{enumerate}
\renewcommand{\theenumi}{(A\arabic{enumi}}
\setcounter{enumi}{2}

\item each plaque is geodesically convex (and even better, for any point $x \in \X$, the star of $x$ with respect to the plaques of $\A$ is contained in a geodesically convex set,

\item boundaries of plaques intersect transversely or not at all.
\end{enumerate}

\noindent
Both properties are needed to replace atlases by box decompositions in the sense of the following definition: 

\begin{definition}\label{def:boxdecomposition}
A family $\B = \{(B_i,\varphi_i)\}_{i \in I}$ is a \emph{box decomposition} of $\X$ if $B_i$ is compact subset of $\X$, named a \emph{flow box}, endowed with a homeomorphism
$$
\varphi_i : B_i \longrightarrow \bar \D^n \times T_i,
$$
where $\bar \D^n$ is the unit closed disk in $\R^n$, $T_i$ is a clopen set in $T$, and 
$$
\partial^v \! B_i = \varphi_i^{-1} \big( \partial \bar \D^n \times T_i \big)
$$ 
is the \emph{vertical boundary} of $B_i$, verifying:  

\begin{enumerate}
\renewcommand{\theenumi}{(B\arabic{enumi}}

\item the family $\B$ covers $\X$,

\item the family $\B$ is locally finite, hence finite $\X$ is compact,  

\item if $B_i \cap B_j \neq \emptyset$, then 
$$
B_i \cap B_j = \partial^v \! B_i \cap \partial^v \! B_j
$$
is homeomorphic to the product of some connected compact subset of $\partial \bar \D^n$ and the intersection of the clopen sets $T_i$ and $T_j$, equal to $T_i$ or $T_j$, 

\item the change of chart is given by the equation
$$
\varphi_{i} \circ \varphi_j^{-1}(x,y)
= (\varphi_{ij}(x,y),\gamma_{ij}(y))
$$
similar to \eqref{ec:cambio_coor}, but $\gamma_{ij}$ is now a homeomorphism from $T_i \cap T_j$ to itself.
\end{enumerate}

\noindent
 of plaques meets in a common face. In this case, we may suppose that the maps $\varphi_{ij}$ are linear and we will say that $\B$ is a \emph{simplicial box decomposition} of $X$. The complete transversal $T = \bigsqcup T_i$ is called the \emph{axis} of $\B$.
\end{definition}

%The existence of box decompositions with geodesically convex plaques and simplicial box decompositions for solenoidal manifolds (of class $C^1$) has been proved in Theorem 2.11 and Theorem 1.2 of \cite{ALM2011}.
The existence of simplicial box decompositions for solenoidal manifolds of class $C^1$ was proven in \cite[Theorem 1.2]{ALM2011} based on the prior construction of box decompositions with geodesically convex plaques.

\section{Suspensions and inverse limits}
\label{Ssuspension}

Many important examples of solenoidal manifolds arise as {\em suspensions} or, equivalently, as laminations of fibre bundles that are transverse to the fibres. See Definition~\ref{def:suspension} for the  meaning of transversality in our context.
We will start here by recalling the classical construction of suspensions, see also \cite{Godbillon} and \cite{Hector-HirschA}. Next, in Section~\ref{Shomogeneous}, we will see some conditions that appear to be more general but nevertheless fall within this category. Our rigidity theorem will be first proved in this context (see Section~\ref{Sproof}), and later will be extended to more general solenoidal manifolds called \emph{skew} to distinguish them from suspensions (see Section~\ref{Snonhomogeneous}). 

\subsection*{Suspensions.} Let $M$ be a $n$-dimensional connected closed manifold and $\Gamma=\pi_1(M,x_0)$ its fundamental group with base point $x_0\in M$. If $p:\tilde M\to M$ is the universal covering of $M$, then $\Gamma$ acts freely and properly discontinuously by deck transformations on $\tilde M$, and $p$ is the projection to the quotient.
\medskip 

Let $F$ be a \emph{space}, $G$ be a \emph{group} acting on $F$, and $\rho:\Gamma\to G$ be a \emph{representation}, that is, a group homomorphism from $\Gamma$ to $G$. If $F$ is a topological space, then $G =\Homeo (F)$ is the group of homeomorphisms; if $F$ is a smooth manifold, then $G = \Diffeo(F)$ is the group of smooth diffeomorphisms; if $F$ is a metric space, then $G=\Isom (F)$ is the group of isometries, and finally if $F$ is a topological group, then $G=F$ acting on itself by left translation.  
\medskip 

The group $\Gamma$ acts diagonally on $\tilde M\times F$ as follows: for each $\gamma\in\Gamma$, 
\begin{equation}  \label{eq:diagonal}
 \gamma.(x,y)=(\gamma x,\rho(\gamma)(y)),
\end{equation}
where the action on the first factor is simply the action by deck transformations. This action preserves the horizontal lamination $\cH$ whose leaves are the horizontal manifolds $\tilde M \times \{y\}$ with $y\in F$.

\begin{definition} \label{def:suspension}
 The \emph{suspension of $\rho$} is the quotient
 $$M_\rho=\Gamma\backslash(\tilde M\times F),$$
 and $q:\tilde M\times F\to M_\rho$ is the projection to the quotient.
\end{definition}

Notice that %the horizontal lamination 
$\cH$ induces a $n$-dimensional lamination $\L_\rho$ on $M_\rho$,
%, so  $M_\rho$ is a laminated or foliated space.
which becomes a laminated or foliated space.
Let \mbox{$p_1:\tilde M\times F\to \tilde M$} be the projection onto the first factor $p_1:(x,y)\mapsto x$. This map induces a locally trivial fibration $\pi:M_\rho\to M$ with fibre $F$ such that the following diagram commutes:

\begin{center}
\begin{tikzcd}
&  & \tilde M\times F \arrow[ld,"p_1" '] \arrow[rd,"q"] \arrow[rr,"p_2"] & & F\\
\phantom{M} & \tilde M   \arrow[rd,"p"] & & M_\rho \arrow[ld,"\pi"] & \\
&  & M. & & 
\end{tikzcd}
\end{center}
We will use the local structure of $M_\rho$ to say that the lamination $\L_\rho$ is \emph{transverse} to the fibration $\pi$.
The projection onto the second factor $p_2: (x,y) \mapsto y$ defining the horizontal lamination $\cH$ is the \emph{developing map} of $\L_\rho$.  Furthermore, for each $y \in F$, the restriction of $q$ to $\tilde M \times \{y\}$ is a regular covering of the leaf $L_y = q(\tilde M \times \{y\})$ with deck transformation group $\Gamma_y = \{\gamma \in \Gamma : \rho(\gamma)(y)= y\}$. For convenience, we will refer indistinctly to the leaves of $\L_\rho$ or $M_\rho$. If both $M$ and $F$ are compact, then $M_\rho$ is also compact. If the action of $\Gamma$ on $F$ has a dense orbit, then $M_\rho$ is connected.
\medskip

If $M$ has a Riemannian metric $g$, the pull-back $\pi^*g$ endows
each leaf of $M_\rho$ with a Riemannian metric. We say that $\pi^*g$ is \emph{leafwise Riemannian metric} on $M_\rho$. If $(M,g)$ is a hyperbolic, then all leaves in $M_\rho$ are hyperbolic when considered with the metric $\pi^*g$ and $M_\rho$ becomes a \emph{hyperbolic foliated space}. The question that we will address is:

%\bigbreak
%\begin{center}
% {\em Does $M_\rho$ admit other hyperbolic structures?}
%\end{center}
%\bigbreak

\begin{question*} 
Does $M_\rho$ admit other hyperbolic structures? Namely, are there leafwise Riemannian metrics on $M_\rho$ for which all leaves have curvature -1 and that are not isometric to $\pi^*g$?
%\end{center}
\end{question*}

%Namely, 
%\begin{center}
% {\em Are there metrics on $M_\rho$ for which all leaves have\\ curvature -1 and that are not isometric to $\pi^*g$?}
%\end{center}
%\bigbreak

In dimension $n=2$ the answer is straightforward. Since the base $M$ admits different hyperbolic structures, their pull-backs are different hyperbolic structures on $M_\rho$. Furthermore, there are hyperbolic metrics on $M_\rho$ which are not isometric to a pull-back metric. In other words, the \emph{Teichm\"uller space} of the foliated space $M_\rho$ is strictly larger than that of $M$.  A detailed analysis can be seen in \cite{BurgosVerjovsky2020} and \cite{VerjovskySullivan} for solenoidal surfaces, and also in \cite{AlvarezSmith2022} and \cite{Deroin} for surface laminations. 
\medskip 

The aim of this section is to show that, just as in the case of manifolds, the situation for $n\geq 3$ is very different, proving the following theorem (named Theorem A in the introduction): 

\begin{theorem}%[Mostow rigidity for hyperbolic suspensions]
 \label{thm:Mostowsuspension}
Let $M_1$ and $M_2$ be two connected closed hyperbolic mani\-folds of dimension $n\geq 3$ with fundamental groups $\Gamma_1$ and $\Gamma_2$.  Let $M_{\rho_1}$ and $M_{\rho_2}$ be the suspension of two representations $\rho_1 : \Gamma_1 \to G$ and $\rho_2 : \Gamma_2 \to G$ into a homeomorphism group $G$ acting on a compact topological space $F$. Let $g_1$ and $g_2$ be two leafwise Riemannian metrics on $M_{\rho_1}$ and $M_{\rho_2}$.
Assume $\rho_1(\Gamma_1)$ preserves an ergodic probability measure on $F$ with full support. 
If $h : M_{\rho_1} \to M_{\rho_2}$ is a leafwise homotopy equivalence, then there is a leafwise isometry $I : M_{\rho_1} \to M_{\rho_2}$ integrable homotopic to $h$.
\end{theorem}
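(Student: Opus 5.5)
We follow Thurston's proof of Mostow rigidity, carried out fibrewise and made to vary continuously in the transverse parameter. Throughout we reduce to the universal covering picture $\tilde M_i \times F$, $i=1,2$. First we use the leafwise metrics. By compactness and uniformisation, each leaf of $(M_{\rho_i},g_i)$ is covered by $\H^n$, and the developing maps identify the lifted lamination $\tilde M_i\times F$ with $\H^n\times F$ carrying a family of hyperbolic metrics depending continuously on $y\in F$. Up to a leafwise isometry homotopic to the identity, we may therefore assume $g_i=\pi_i^*(\text{hyperbolic metric on } M_i)$. This gives the \emph{boundary at infinity} $\partial\H^n\times F$, on which $\Gamma_i$ acts diagonally.

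Next we lift $h$. A leafwise homotopy equivalence induces a homeomorphism of leaf spaces and $\pi_1$-isomorphisms of leaves. Restricting $\pi_2\circ h$ to a leaf and comparing fundamental groups, we obtain an isomorphism $\theta:\Gamma_1\to\Gamma_2$. Compactness then gives a lift $\tilde h:\H^n\times F\to \H^n\times F$ of the form $(x,y)\mapsto(\tilde h_y(x),f(y))$. Here $f$ is a homeomorphism with $f\circ\rho_1(\gamma)=\rho_2(\theta\gamma)\circ f$, and $\tilde h_y$ is $\theta$-equivariant in the sense that $\tilde h_{\rho_1(\gamma)y}\circ\gamma=\theta(\gamma)\circ\tilde h_y$. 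Since $M_{\rho_1}$ is compact, the maps $\tilde h_y$ are uniformly $(K,C)$-quasi-isometries: the bound holds on a compact fundamental domain and propagates by equivariance. Using a homotopy inverse, they are quasi-isometries with constants independent of $y$.

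By the Morse lemma, each $\tilde h_y$ extends to a quasiconformal homeomorphism $\partial\tilde h_y:S^{n-1}\to S^{n-1}$ with uniform dilatation bound. Because geodesics are tracked uniformly, the extension depends continuously on $y$ and satisfies the same twisted equivariance. The key step, and the main obstacle, is to show that almost every $\partial\tilde h_y$ is conformal. We follow Thurston/Mostow. Quasiconformal maps are differentiable a.e. Consider the measurable field $(\xi,y)\mapsto$ ellipsoid of $D\partial\tilde h_y(\xi)$ on $S^{n-1}\times F$. It is invariant under the diagonal $\Gamma_1$-action, measured by the product of Lebesgue measure with the invariant ergodic measure $\mu$. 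Equivalently, after passing to $T^1M_{\rho_1}$, it is invariant under the geodesic flow on $T^1\H^n\times F$ modulo $\Gamma_1$ with measure Liouville$\times\mu$.

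I would prove ergodicity of this flow in two stages. First, the flow is ergodic on $T^1M_{\rho_1}$ with the product measure: $\mu$ is $\Gamma_1$-invariant, so Liouville$\times\mu$ is flow-invariant. We then run the Hopf argument leafwise. Stable and unstable horospherical foliations live inside leaves, and the transverse direction is handled because $\mu$-ergodicity of $\rho_1(\Gamma_1)$ makes functions constant along leaves that are invariant a.e. constant. Alternatively, we can use Moore ergodicity of the $A$-action on $\Gamma_1\backslash(\mathrm{SO}^+(n,1)\times F)$, via Mautner for the induced unitary representation, since the action of $\mathrm{SO}^+(n,1)$ on $L^2$ of the induced space is ergodic iff $\Gamma_1$ on $(F,\mu)$ is.

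Given ergodicity, the eccentricity of the ellipsoid field is a.e. constant. If it is not $1$, the direction field gives a measurable $\Gamma_1$-invariant field on $S^{n-1}$ for a.e. fibre. Thurston's zooming argument at density points then yields a non-conformal map that is linear along a dense set. Since $n\ge 3$, this produces a contradiction with equivariance. The dimension restriction is exactly where quasiconformality forces a.e. differentiability with the relevant invariant structure. Hence $\partial\tilde h_y$ is Möbius for $\mu$-a.e. $y$. Full support, together with continuity of $y\mapsto\partial\tilde h_y$ and closedness of the Möbius group, then gives Möbius for all $y$.

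Finally, we set $\tilde I(x,y)=(\mathrm{ext}(\partial\tilde h_y)(x),f(y))$, using the isometric extension of a Möbius map. It is continuous, leafwise isometric, and $\Gamma$-equivariant, so it descends to a leafwise isometry $I$. The straight-line geodesic homotopy between $\tilde h_y$ and this extension, which stays at bounded distance, is equivariant and leaf-preserving. It descends to an integrable homotopy between $h$ and $I$.
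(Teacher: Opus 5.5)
Up to the eccentricity step your outline follows the paper: lift, uniform quasi-isometries, a boundary map continuous in $y$, and Furstenberg/Moore ergodicity. The step from ``the eccentricity is a.e.\ constant'' to ``$\partial\tilde h_y$ is M\"obius for a.e.\ $y$'' has a genuine gap.

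\textbf{The conformality step.} The ellipsoid field is invariant only under the diagonal action of $\Gamma_1$ on $S^{n-1}\times F$, and $\gamma$ carries the fibre over $y$ to the fibre over $\rho_1(\gamma)y$. The elements preserving a fibre form the stabiliser $\Gamma_{1,y}\cong\pi_1(L_y)$. This is trivial for every $y$ as soon as the leaves are simply connected, e.g.\ for McCord solenoids of faithful representations into $\hat\Gamma_1$. So there is no ``$\Gamma_1$-invariant field on $S^{n-1}$ for a.e.\ fibre'', and Thurston's argument, which uses a lattice acting on a single sphere, cannot be run fibre by fibre. Zooming by elements $\gamma_i$ moves you to the fibres over $\rho_1(\gamma_i)y$. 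With compactness of $F$ and continuity in $y$, what it naturally yields is that some other $\partial\tilde h_{y'}$ is M\"obius-equivalent to a non-conformal linear map. You give no argument turning this, together with an equivariance that only relates \emph{different} fibres, into a contradiction. Your account of where $n\ge 3$ enters is also misplaced: quasiconformal maps are differentiable a.e.\ in every dimension, and what needs $n-1\ge 2$ is that $1$-quasiconformal maps of $S^{n-1}$ are M\"obius. The missing idea, which is the paper's, is ergodicity on \emph{directions} at the boundary rather than on pairs of points. For a unit vector $v$ at a point $\xi$ of differentiability, the normalised stretch $s(v)=\|D(\partial\tilde h_y)_\xi v\|/\|D(\partial\tilde h_y)_\xi\|$ is $\Gamma_1$-invariant, because $\Gamma_1$ acts conformally on source and target. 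Moreover $\Gamma_1$ acts ergodically on the leafwise direction bundle $\isomH/\SO{n-2}AU\times F$ of the boundary: this is dual to the right action of the non-compact group $\SO{n-2}AU$ on $\Gamma_1\bs(\isomH\times F)$, which is ergodic by Moore's theorem. Hence $s$ is a.e.\ constant, so the differential is conformal a.e. Liouville--Gehring--Reshetnyak (Bourdon's Theorem~1.5 in the paper) then makes $\partial\tilde h_y$ M\"obius for a.e.\ $y$, and your continuity and full-support argument finishes.

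\textbf{Two earlier steps.} First, uniformising the leaves does not let you assume $g_i=\pi_i^*(\text{hyperbolic metric of }M_i)$. It only gives hyperbolic metrics $\tilde g_i^y$ on $\tilde M_i$ with $\gamma^*\tilde g_i^{\rho_i(\gamma)y}=\tilde g_i^y$, i.e.\ an $\isomH$-valued cocycle as in Section~\ref{Snonhomogeneous}. Untwisting it by a leafwise isometry homotopic to the identity is exactly the case $h=\mathrm{id}$ of the theorem (Corollary~\ref{cor:Teichmullersuspension}), so as written the reduction is circular. It is also unnecessary: keep the metrics $\tilde g_i^y$, as the paper does. By compactness, the identity of $\tilde M_1$ from $\tilde g_1^y$ to the lifted hyperbolic metric of $M_1$ is bi-Lipschitz with constants independent of $y$. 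It therefore identifies $\partial(\tilde M_1\times F)$ $\Gamma_1$-equivariantly with $S^{n-1}_\infty\times F$, with the diagonal action, through quasiconformal and hence measure-class-preserving maps, so the ergodicity statements transfer. Second, $\theta$ cannot be obtained by restricting $\pi_2\circ h$ to a leaf, since $\pi_1(L_y)\cong\Gamma_{1,y}$ may be trivial. The isomorphism and the conjugating homeomorphism of $F$ have to be read off from the lift $\tilde M_1\times F\to\tilde M_2\times F$, as in Proposition~\ref{prop:lifting}.
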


Before stating a natural corollary of this theorem, we will clarify the meaning of the condition imposed on the map $h$:

\begin{definition} A \emph{leafwise homotopy equivalence} $h : M_{\rho_1} \to M_{\rho_2}$ is a homotopy equivalence, with homotopy inverse $h'$, such that 
\medskip 

\noindent
(i) $h$ and $h'$ are \emph{foliated maps} preserving the laminations of $M_{\rho_1}$ and $M_{\rho_2}$, 
\medskip 

\noindent
(ii) the maps $h' \circ h$ and $h \circ h'$ are homotopic to the identity by means of \emph{integral homotopies} along the leaves. 
\medskip 

\noindent
Notice that the first condition only states that the leaves of $M_{\rho_1}$ are sent to leaves of $M_{\rho_2}$ by $h$ (and conversely by $h'$), but the existence of an integral homotopy implies that $h' \circ h$ (resp. $h \circ h'$) leaves each leaf of $M_{\rho_1}$ (resp. $M_{\rho_2}$) invariant.  
\end{definition}

\begin{corollary} \label{cor:Teichmullersuspension}
 Let $M$ be a connected closed hyperbolic manifold of dimension $n\geq 3$ with fundamental group $\Gamma$. Let $M_\rho$ be the suspension of a representation $\rho:\Gamma\to G$ into a homeomorphism group $G$ acting on a compact space $F$. If $\rho(\Gamma)$ preserves an ergodic probability measure on $F$ with full support, then $M_\rho$ admits a unique hyperbolic metric, namely, the pull-back to $M_\rho$ of the unique hyperbolic metric on $M$. In other words, the Theichmüller space of $M_\rho$ contains only one point. 
\end{corollary}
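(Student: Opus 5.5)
The corollary follows from Theorem~\ref{thm:Mostowsuspension} applied with $M_1 = M_2 = M$, $\rho_1 = \rho_2 = \rho$, $h = \mathrm{id}$ and two leafwise hyperbolic metrics. Let $g$ be the hyperbolic metric on $M$, $g_0 = \pi^*g$ its pull-back, and $g'$ any leafwise Riemannian metric on $M_\rho$ whose leaves all have curvature $-1$.

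\textbf{Step 1: apply the theorem.} The identity $\mathrm{id} : (M_\rho, g') \to (M_\rho, g_0)$ is trivially a leafwise homotopy equivalence, with homotopy inverse the identity and constant integral homotopies. Since $\rho(\Gamma)$ preserves an ergodic probability measure of full support, Theorem~\ref{thm:Mostowsuspension} gives a leafwise isometry
$$
I : (M_\rho, g') \to (M_\rho, g_0)
$$
that is integrably homotopic to the identity. Equivalently, $g' = I^* g_0$.

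\textbf{Step 2: interpret this as a statement about Teichm\"uller space.} The Teichm\"uller space should be understood as leafwise hyperbolic metrics modulo pull-back by leafwise diffeomorphisms integrably homotopic to the identity. This mirrors the classical definition, where metrics are taken modulo diffeomorphisms isotopic to the identity. Under that definition, Step~1 says exactly that $[g'] = [g_0]$, so the Teichm\"uller space is a single point.

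\textbf{Step 3: the representative $g_0$ is canonical.} One might worry that the pull-back representative depends on the choice of hyperbolic metric on $M$. By classical Mostow rigidity, any two hyperbolic metrics on $M$ differ by an isometry homotopic to a diffeomorphism of $M$. Their pull-backs are therefore related by the lift of that diffeomorphism to $M_\rho$, which is well defined because the lift acts on $\tilde M \times F$ compatibly with $\rho$, up to the induced automorphism of $\Gamma$. In any case, Step~1 applied to both pull-backs already shows they define the same point.

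\textbf{Main subtlety.} The one point needing care is regularity: $I$ must be a leafwise diffeomorphism that is continuous transversally in the smooth topology, so that $I^*g_0$ is again an admissible leafwise metric in the sense of Definition~\ref{def:metric}. A leafwise isometry between smooth leafwise Riemannian metrics is automatically leafwise smooth, by the Myers--Steenrod theorem applied leaf by leaf. Transverse continuity is part of the conclusion of the theorem, since $I$ is a map of foliated spaces. Beyond this, everything is immediate from Theorem~\ref{thm:Mostowsuspension}.
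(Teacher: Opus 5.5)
Your proposal is correct and is exactly the argument the paper has in mind when it calls this a natural corollary of Theorem~\ref{thm:Mostowsuspension}. You apply the theorem to the identity of $M_\rho$, with an arbitrary leafwise hyperbolic metric on one side and $\pi^*g$ on the other, and read the Teichm\"uller space as leafwise hyperbolic metrics modulo leafwise isometries integrably homotopic to the identity. Your Step~3 is superseded by Step~1, and your regularity concern is moot, since $I^*g_0 = g'$ is admissible by hypothesis.
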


\begin{remark}[About the hypothesis of Theorem~\ref{thm:Mostowsuspension}] \label{rem:ergodic}
%Let us first observe that the representation $\rho_1$ is faithful if the common holonomy covering of the leaves is simply connected. 
Here we specify the assumptions in the two fundamental cases:
\medskip 

\noindent
 (1) If $F = G$ acting on itself by left translation,  then 
\medskip 

\noindent
 (i) the representation $\rho_1$ is faithful if and only if the leaves of $M_{\rho_1}$ are simply connected, 
\medskip 

\noindent
(ii) the action of $\rho_1(\Gamma_1)$ on $G$ is ergodic (with respect to the Haar measure) if and only if $\rho_1(\Gamma_1)$ is dense in $G$ (see \cite[Lemma 2.2.13]{Zimmer}).
\medskip 

\noindent
Thus, replacing $G$ with the compact subgroup $\overline{\rho_1(\Gamma_1)}$, we can always assume that 
%the leaves of $M_{\rho_1}$ are dense, their common universal covering is contractible, and the invariant measure has full support. 
the leaves of $M_{\rho_1}$ are dense and simply connected, and the Haar measure is ergodic with full support.
\medskip 

\noindent
(2) If $G = \Isom (F)$, assuming that the leaves of $M_{\rho_1}$ are dense, S. Matsumoto proves in \cite{Matsumotoequi} that there is a unique ergodic probability measure on $F$ which is invariant by the action of $\rho_1(\Gamma_1)$.  In fact, in our context, we can deduce from a theorem by R. Sacksteder \cite[Theorem 3]{Sacksteder} that $\rho_1(\Gamma_1)$ preserves an (ergodic) probability measure with full support if the leaves of $M_{\rho_1}$ are dense. 
\end{remark}

\begin{remark}[About the laminations] \label{rem:homogeneous}
(1) If $F = G$ acting on itself by left translation, the suspension of a representation $\rho_1 : \Gamma_1 \to G$ is an example of $G$-Lie lamination in the sense of Definition~\ref{def:Lie} which adapts the notion of \emph{Lie foliation} (see \cite{Fedida}, \cite{Godbillon} and \cite{Hector-HirschA}) to the solenoidal context. The leaves are dense and simply connected if and only if the representation $\rho_1$ is faithful and its image is dense in $G$. According to the definition given by \'E. Ghys in \cite[Appendix E]{Molinobook}, such a lamination is said to be \emph{homogeneous} if it is the double coset obtained from a group $H$ that projects onto $G$ with kernel $K$ by the action of a cocompact discrete subgroup of $H$ on the left and the action of a compact subgroup of $K$ on the right. If $G$ is compact (in particular, if $G$ is a Cantor group), it is easy to prove that any homogeneous $G$-Lie lamination is a suspension. However, in general, there are homogeneous $G$-Lie foliations which are not suspensions. All details will be explained in a second part \cite{AMVMostowfoliations} of this work.
\medskip 

\noindent
(2) If $G = \Isom (F)$, the suspension of a representation $\rho_1 : \Gamma_1 \to G$ is an example of Riemannian lamination, similar to the Riemannian foliations studied in \cite{Molinobook} (see also Definition~\ref{def:Lie}). As before, the 
leaves are dense if and only the group $\rho_1(\Gamma_1)$ is dense in $G$.  Furthermore, all the leaves have the same universal covering (see \cite{Reinhart}), which is contractible if the leaves have non-positive sectional curvature. Observe that Molino's theory is particularly straightforward in this context and $M_{\rho_1} = \Gamma_{\rho_1} \bs (\tilde M_1 \times F)$ is a foliated fibre bundle associated to the foliated principal $G$-bundle $E_{\rho_1} = \Gamma_{\rho_1} \bs (\tilde M_1 \times G)$. The representation $\rho_1$ is faithful if and only if the leaves of $E_{\rho_1}$ are simply connected. See Section~\ref{Shomogeneous}. 
\medskip 

\noindent
(3) Skew solenoidal manifolds constructed from cocyles will provide examples of  Lie and Riemannian laminations wich are not homogeneous. Our aim will be to extend Theorem~\ref{thm:Mostowsuspension} to this kind of solenoidal manifolds. See Section~\ref{Snonhomogeneous}. 
\end{remark} 

\begin{remark}[About the finiteness condition] The finiteness of the volume of the hyperbolic manifolds $M_i$ and $M$ is essential in Theorem~\ref{thm:Mostowsuspension} and Corollary~\ref{cor:Teichmullersuspension}. If we take an orientable compact surface $\Sigma$ of genus $g \geq 2$ and $M$ is homeomorphic to the product $\Sigma \times \R$ endowed with a hyperbolic metric, then the group $\Gamma$ is quasi-Fuchsian and the suspension $M_\rho$ of the faithful representation $\rho$ of $\Gamma$ into the profinite completion $G = \hat \Gamma$ admits uncountable many leafwise hyperbolic metrics. 
\end{remark}

%{\blue  Aquí hay un problema bellísimo a atacar. La clasificación de 3-variedades hiperbólicas de tipo finito (i.e, con $\pi_1$ finitamente generado como $\Sigma \times \R$ ) es un resultado excelente de varios autores (Brook, Canary, Minsky, Thurston, . . . ), es el famoso
%Ending lamination theorem ($https://en.wikipedia.org/wiki/Ending-lamination-theorem$), pero esto (y las medidas de Patterson-Sullivan) lo tedriamos que discutir por zoom. Creo que tendriamos una versión de este teorema para variedades solenoidales de tipo McCord sobre 3-variedades hiperbólicas de tipo finito. Suena fascinante.}

Theorem~\ref{thm:Mostowsuspension} will be proved in the next section. The rest of this section consists in formulating the analog (named Theorem B in the introduction) for inverse limits of hyperbolic manifolds and describing the foliated structure of their unit tangent bundles. 

\subsection*{Inverse limits.} Given a $n$-dimensional connected closed manifold $M$, let us consider a directed family of finite coverings of $M$, that is, a family of $n$-dimensional manifolds $M_\alpha$ and non-isomorphic coverings $\pi_\alpha:M_\alpha\to M$ such that the $\alpha$ belong to an infinite set $A$ equipped with the order $\alpha\leq \beta$ if and only if there exists a finite covering 
$$p_{\alpha\beta}:M_\beta\to M_\alpha$$
such that $\pi_\beta = \pi_\alpha \circ p_{\alpha\beta}$.
The inverse limit 
\begin{align*}
\X & = \varprojlim \; \{(M_\alpha, \pi_\alpha)\}_{\alpha\in A} \\
& = 
\{ (x_\alpha) \in \prod_{\alpha \in A} M_\alpha : p_{\alpha\beta}(x_\beta) = x_\alpha \text{ whenever } \alpha\leq \beta \}
\end{align*}
is an $n$-dimensional solenoidal manifold. There is a natural projection $\pi : \X \to M$ where $\pi(x) = x_0$ is the component of $x$ in $M$, and the leaves are transverse to the fibres. In other words, the $n$-dimension solenoidal manifold $\X$ is the suspension of a representation $\rho : \Gamma \to G$ where $\Gamma = \pi_1(M,x_0)$ and $G = \Homeo (F)$ is the homeomorphism group of the Cantor set $F$. See \cite{VerjovskySullivan} for a detailed description. 
\medskip 

If all coverings $p_{\alpha\beta}$ are regular, M. C. McCord proved in \cite{McCord} that the inverse limit $\mathcal{X}$ is \emph{topologically homogeneous}, i.e. for any two points $x,y \in \mathcal{X}$, there is a homeomorphism $f : \mathcal{X} \to \mathcal{X}$ such that $f(x) = y$. Equivalently $\mathcal{X}$ is the suspension of a representation $\rho : \Gamma \to G$ where $G$ is a Cantor group acting on itself by left translation, and we say $\X$ is a \emph{McCord solenoid} over $M$. See also \cite{VerjovskySullivan}.
A theorem by Clark and Hurder \cite{ClarkHurder} proves that any topologically homogeneous solenoidal manifold is homeomorphic to a McCord solenoid obtained as inverse limit of finite regular coverings.
\medskip 

\begin{remark} \label{rem:riemannian}
By replacing the family $\{(M_\alpha, \pi_\alpha)\}_{\alpha\in A}$ by a cofinal set, we can define $\X$ as the inverse limit of a 
tower of non-trivial finite coverings 
%\begin{center}
%\begin{tikzcd}
%\cdots  \phantom{x} M_{m+1} \arrow[rr, "\pi_{m,m+1}"] & &  M_m \arrow[rr,"\pi_{m-1,m}"] & & \phantom{x} \cdots \phantom{x} \arrow[r] & M_1 \arrow[r, "\pi_{0,1}"] & M_0 = M.
%\end{tikzcd}
%\end{center}
$$
\cdots M_{m+1} \overset{p_m}{\longrightarrow} M_m \longrightarrow \cdots \longrightarrow M_1 \overset{p_0}{\longrightarrow} M_0 = M.
$$ 
Let $d$ be the distance on the Cantor set $F= \pi^{-1}(x_0)$ given by 
$$
d((x_m),(y_m)) = 2^{- N((x_m),(y_m))}
$$
where $N((x_m),(y_m))$ is the smallest integer such that $x_m \neq y_m$, or the distance is zero if such a integer does not exist and then $(x_m) = (y_m)$.  As the covering maps $\pi_{m+1} = p_m \circ \cdots \circ p_0$ have the path lifting property, this distance is preserved by the action of the holonomy group $\rho(\Gamma)$. Then the representation $\rho$ factorizes through $G = \Isom (F)$. Now, by applying Arzelà-Ascoli theorem, we know that $G$ is compact endowed with the compact-open topology (which coincides with the topology of uniform convergence). 
\end{remark}

The above observations show that the inverse limit of hyperbolic manifolds are a particular case of suspensions where the fibres are homeomorphic to the Cantor set. Now we can reformulate Theorems~\ref{thm:Mostowsuspension} and Corollary~\ref{cor:Teichmullersuspension} in this context, where we apparently suppress some conditions. However we will immediately see how the new statements can be derived from Theorems~\ref{thm:Mostowsuspension} and Corollary~\ref{cor:Teichmullersuspension}.Next, we specify the statement of this result that was already presented as Theorem B in the introduction.

\begin{theorem}%[Mostow rigidity for inverse limit of hyperbolic manifolds] 
\label{thm:Mostowinvlimit}
 Let $\X_1$ and $\X_2$ be two n-dimensional hyperbolic solenoidal manifolds obtained as the inverse limit of towers of finite coverings of a two connected closed hyperbolic manifolds $M_1$ and $M_2$ of dimension $n \geq 3$. 
If $h : \X_1 \to \X_2$ is a homotopy equivalence, then there exists a leafwise isometry $I : \X_1 \to \X_2$  integrable homotopic to $h$. 
\end{theorem}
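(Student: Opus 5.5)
The plan is to reduce Theorem~\ref{thm:Mostowinvlimit} to Theorem~\ref{thm:Mostowsuspension}. By Remark~\ref{rem:riemannian}, after passing to a cofinal tower, each $\X_i$ is the suspension $M_{\rho_i}$ of a representation $\rho_i : \Gamma_i \to G_i = \Isom(F_i)$. Here $F_i = \pi_i^{-1}(x_i)$ is a Cantor set with the ultrametric $d$, and $G_i$ is compact by Arzel\`a--Ascoli. Both fibres are Cantor sets, so they are homeomorphic, and we may view $\rho_1,\rho_2$ as taking values in the single group $G=\Homeo(F)$, as Theorem~\ref{thm:Mostowsuspension} requires.

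Next I would verify the measure hypothesis. Each leaf of $\X_1$ is dense. Indeed, the orbit of $\rho_1(\Gamma_1)$ on $F_1$ acts transitively on each finite level $\pi_{m}^{-1}(x_0)$ because $M_m$ is connected, so every cylinder set is hit. By Remark~\ref{rem:ergodic}(2), using Sacksteder and Matsumoto, $\rho_1(\Gamma_1)$ then preserves a unique, hence ergodic, probability measure of full support. Concretely, this is the measure giving mass $1/\deg(\pi_m)$ to each cylinder of level $m$. It is invariant because $\rho_1(\Gamma_1)$ permutes the cylinders of each level, and it has full support. Uniqueness follows from transitivity on each level, which gives ergodicity.

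Then I would upgrade $h$ to a leafwise homotopy equivalence. The leaves of $\X_i$ are exactly the path components. Continuous maps send path components into path components, and homotopies move points along paths, which must stay inside a leaf. Hence $h$ and a homotopy inverse $h'$ are foliated, and the homotopies $h'\circ h\simeq \mathrm{id}$ and $h\circ h'\simeq\mathrm{id}$ are integral along leaves. Leafwise smoothness, if it is needed, is obtained by leafwise smoothing within the homotopy class, which preserves these properties.

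The hyperbolic metrics $g_i$ are leafwise Riemannian metrics on $M_{\rho_i}$. Applying Theorem~\ref{thm:Mostowsuspension} then gives the leafwise isometry $I$ integrably homotopic to $h$. I expect the main obstacle to be the homotopy step: one must check that the trajectories of a homotopy are genuinely leafwise paths, continuous in the leaf topology and not only in the ambient topology. I would handle this by arguing in local flow boxes $\D^n\times K$. A continuous path in $\D^n\times K$ has constant $K$-coordinate, because $K$ is totally disconnected. Hence the path is continuous in the leaf topology, and the homotopy is integral.
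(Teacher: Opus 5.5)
Your proposal is correct and follows the paper's own route in Remark~\ref{rem:reducingthm2tothm1}: realize each inverse limit as a suspension, observe that $h$, a homotopy inverse and the homotopies respect leaves because the leaves are the path components, and supply an ergodic invariant probability measure of full support, so that Theorem~\ref{thm:Mostowsuspension} applies. Your uniform cylinder measure, which is uniquely ergodic because the action is transitive on each level, and your flow-box argument for integrality make explicit what the paper obtains from the Haar measure, Sacksteder and Matsumoto, and from the path-component remark.
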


\begin{remark}[Reducing Theorem~\ref{thm:Mostowinvlimit} to Theorem~\ref{thm:Mostowsuspension}.] \label{rem:reducingthm2tothm1}
Let us first observe that, under the hypotheses of Theorem~\ref{thm:Mostowinvlimit}, a homotopy  equivalence $h : \X_1 \to \X_2$ always preserves the laminations of $\X_1$ and $\X_2$ since the leaves are just the path connected components of $\X_1$ and $\X_2$. 
%The hypothesis about the holonomy covering of the leaves of $\X_1$ imposed in Theorem~\ref{thm:Mostowinvlimit} is the same as the one imposed in Theorem~\ref{thm:Mostowsuspension}. The meaning of this hypothesis will be explained in Remark~\ref{rem:holonomy}. 
To verify the existence of an invariant measure of full support imposed in Theorem~\ref{thm:Mostowsuspension}, let us distinguish the two type of inverse limits: 
\medskip 

\noindent
1) Any McCord solenoid $\X_1$ over $M_1$ with simply connected leaves is the suspension of a faithful representation $\rho_1$ of $\Gamma_1$ into its profinite completion $G_1 = \hat \Gamma_1$. Then the Haar measure on $G_1$ is preserved by the left action of $\rho_1(\Gamma_1)$ and this action is indeed uniquely ergodic. See also Remark~\ref{rem:ergodic}. In fact, as proved in \cite{VerjovskySullivan}, any other McCord solenoid over $M_1$ is the quotient by a closed normal subgroup of $G_1$. Thus the above condition about the homotopy of the leaves is unnecessary. 
\medskip 

\noindent
2) For any inverse limit of a tower of finite coverings, according to a theorem by Sacksteder \cite{Sacksteder}, there exists a probability measure of full support on the Cantor set $F$ which is invariant by the action of $\rho_1(\Gamma_1)$. As observed in \cite{Matsumotoequi}, this action is also uniquely ergodic.
\medskip 

\noindent
Therefore the proof of Theorem~\ref{thm:Mostowinvlimit} reduces to the proof of Theorem~\ref{thm:Mostowsuspension}.
\end{remark}

\begin{corollary} \label{cor:Teichmullerinvlimit}
The Teichmüller space of the inverse limit of any tower of coverings of a connected closed hyperbolic manifold $M$ of dimension $n\geq 3$ contains only one point. 
\end{corollary}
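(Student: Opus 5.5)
The corollary should follow from Theorem~\ref{thm:Mostowinvlimit}, applied with both solenoids equal to the same inverse limit $\X$ and $h$ the identity, in the same way that Corollary~\ref{cor:Teichmullersuspension} follows from Theorem~\ref{thm:Mostowsuspension}.

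Let $\X = \varprojlim M_m$ be the inverse limit of a tower of finite coverings of $M$. The Teichmüller space of $\X$ is the set of leafwise hyperbolic metrics on $\X$ modulo leafwise isometries that are integrably homotopic to the identity. We use the same equivalence as the paper does for $M_\rho$.

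Let $g$ be an arbitrary leafwise hyperbolic metric on $\X$, and let $g_0 = \pi^* g_M$ be the pull-back of the hyperbolic metric of $M$ under the projection $\pi : \X \to M$. We regard $\X_1 = (\X, g_0)$ and $\X_2 = (\X, g)$ as two hyperbolic solenoidal manifolds over the same closed hyperbolic manifold $M_1 = M_2 = M$. The identity $h = \mathrm{id} : \X_1 \to \X_2$ is a homotopy equivalence. It preserves leaves, since leaves are the path components, and it is its own homotopy inverse, with constant homotopies that are trivially integrable.

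Theorem~\ref{thm:Mostowinvlimit} then gives a leafwise isometry $I : (\X, g_0) \to (\X, g)$ that is integrably homotopic to the identity. Hence $g$ and $g_0$ represent the same point of Teichmüller space.

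The one point needing care is that Theorem~\ref{thm:Mostowinvlimit} is formulated for metrics that are hyperbolic on $\X$, while its proof goes through Theorem~\ref{thm:Mostowsuspension}, which allows arbitrary leafwise Riemannian metrics $g_1, g_2$. So I must check that a leafwise hyperbolic metric $g$ not coming from $M$ is still allowed. By Definition~\ref{def:metric}, $g$ varies continuously transversally and has curvature $-1$ on each leaf, and Theorem~\ref{thm:Mostowsuspension} imposes no other restriction on $g_2$. The measure hypothesis concerns only $\rho_1$, and Remark~\ref{rem:reducingthm2tothm1} (Sacksteder, Matsumoto) supplies it: an invariant ergodic probability measure of full support on the Cantor fibre. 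So the reduction applies.

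Finally, "only one point" means the class of $g_0$, the pull-back of the unique hyperbolic metric on $M$ given by classical Mostow rigidity.
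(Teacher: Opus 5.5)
Your proposal is correct and follows the route the paper intends: the corollary comes from Theorem~\ref{thm:Mostowinvlimit}, which is itself reduced to Theorem~\ref{thm:Mostowsuspension} through Remark~\ref{rem:reducingthm2tothm1}, applied to the identity map from $(\X,\pi^*g_M)$ to $(\X,g)$, in the same way Corollary~\ref{cor:Teichmullersuspension} comes from Theorem~\ref{thm:Mostowsuspension}. Putting the pulled-back metric on the source side is also the right choice, because the Hopf-type ergodicity argument in the paper's proof is carried out for that homogeneous metric on $M_{\rho_1}$.
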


\subsection*{Foliated structure of the unit tangent bundle.} Similarly to any solenoidal manifold, the unit tangent bundle of a lamination (endowed with a leafwise Riemannian metric) admits a foliated structure whose leaves are the unit tangent bundles of the leaves. We will deal with the two previous situations here.
\medskip 

Let $\H^n$ be the hyperbolic space of dimension $n$ and $\isomH$ the group of orientation-preserving isometries of $\H^n$. Let $M$ be a $n$-dimensional hyperbolic manifold obtained as the quotient of $\H^n$ by the action of a torsion-free cocompact discrete subgroup $\Gamma$ of $\isomH$.  Thus $M$ identifies to the double coset space $\Gamma \bs \isomH / \SO{n}$. 

If $M_\rho$ is the suspension of a representation $\rho: \Gamma \to G$, then the diagonal action of $\Gamma$ on the product $\isomH \times F$ is free and properly discontinuous. The quotient 
$$
\M_\rho = \Gamma \bs \big( \isomH \times F \big)
$$
is a principal $\SO{n}$-bundle over 
$$
M_\rho = \Gamma \bs \big(\isomH/\SO{n} \times F \big).
$$ 
The unit tangent bundle 
$$
M^1_\rho = T^1\L_\rho = \Gamma \bs \big( \isomH/\SO{n-1} \times F \big).
$$

Notice that $\Gamma_\rho = \{ (\gamma,\rho(\gamma)) : \gamma \in \Gamma \}$ is a subgroup of $\isomH \times G$ and then the quotient
$$
\E_\rho = \Gamma_\rho \bs \big( \isomH \times G \big)
$$
has a double structure of principal $\SO{n}$-bundle over 
$$
E_\rho = \Gamma_\rho \bs \big( \isomH/\SO{n} \times G \big) 
$$
and principal $G$-bundle over $\Gamma \bs \isomH$. 
Then $\M_\rho$ is the fibre bundle with fibre $F$ associated to $\E_\rho$ (and therefore $M_\rho$ is the fibre bundle with fibre $F$ associated $E_\rho$). Similarly the unit tangent bundle $M^1_\rho$ is the fibre bundle with fibre $F$ associated to $E^1_\rho = \E_\rho/\SO{n-1}$. 
\medskip 

The \emph{geodesic flow} $g_t$ on $M^1_\rho$ is the flow for which all leaves are invariant, and that restricts to the geodesic flow on each leaf. Thus this flow identifies to the right action of a Cartan subgroup $A=\{a_t\}_{t \in \R}$ of $\isomH$ on $M^1_\rho$ (resp. $E^1_\rho$) induced by the natural right action on $\isomH \times F$ acting trivially on the second factor. Namely 
$$g_t (\Gamma(u,y)) = \Gamma(ua_t,y)$$ 
for all $u \in \isomH$ and all $y \in F$. Notice that the subgroup $\SO{n-1}$ is the centralizer of $A$ in $\SO{n}$. 
\medskip 

The same construction applies to any hyperbolic n-dimensional solenoidal manifold $\X$ given as the inverse limit of a tower of finite coverings 
$$\cdots M_{m+1} \overset{p_m}{\longrightarrow} M_m \longrightarrow \cdots \longrightarrow M_1 \overset{p_0}{\longrightarrow} M_0 = M.$$ 
The unit tangent bundle $\X^1 = T^1\X$ is the inverse limit of the tower of coverings
$$\cdots T^1M_{m+1} \overset{p_m^1}{\longrightarrow} T^1M_m \longrightarrow \cdots \longrightarrow T^1M_1 \overset{p_0^1}{\longrightarrow} T^1M_0 = T^1M,$$ 
where $p_m^1$ denotes the unit tangent map induced by $p_m$.
With the notations of the previous paragraphs, if $\X$ is McCord defined by a representation $\rho : \Gamma \to G$, then the unit tangent bundle
$$
\X^1= \Gamma_\rho \bs \big(\isomH/\SO{n-1} \times G \big),
$$ 
where $\Gamma_\rho$ acts diagonally on $\isomH/\SO{n-1} \times G$.  In general $\X$ is the fibre bundle with fibre $F$ associated to a principal $G$-bundle 
$$
E_\rho = \Gamma_\rho \bs \big( \H^n \times G \big),
$$
where $\Gamma_\rho$ acts diagonally from a representation $\rho: \Gamma \to G = \Isom (F)$. As before, the unit tangent bundle
$\X^1$ is the fibre bundle with fibre $F$ associated to the 
a principal G-bundle $\E_\rho /\SO{n-1}$ where 
$$
\E_\rho  = \Gamma_\rho \bs \big(\isomH \times G\big)
$$
is the quotient of the group $\isomH \times G$ by the (diagonal) action of its subgroup $\Gamma_\rho$.  
\medskip 

The \emph{geodesic flow} $g_t$ on $\X^1$ can be derived from the geodesic flows on the hyperbolic manifolds $M_m$, namely 
$$g_t((u_m)) = (g_t(u_m))$$
for any point $(u_m) \in \X^1$ satisfying $p_{m+1}^1(u_{m+1}) = u_m$ in $T^1 M_m$. This flow identifies again to the right action of a Cartan subgroup $A$ of $\isomH$ induced by the action on $\isomH \times F$ which is trivial on the second factor.

\section{Proof of Theorem~\ref{thm:Mostowsuspension}} \label{Sproof}
\label{Sproofth1}

In this section we will prove Theorems~\ref{thm:Mostowsuspension}~and~\ref{thm:Mostowinvlimit} and their respective corollaries. We will follow the same proof schema that W. P. Thurston used in his proof of Mostow's rigidity theorem \cite{ThurstonNewBook}. We will distinguish four different steps: 
\medskip 

\noindent
(1) The first step will consist in lifting any homotopy equivalence to a suitable covering space where the lamination can be developed as a product.
\medskip 

\noindent
(2) The second step will consist in observing that the lifted map is a leafwise quasi-isometry with uniform constants. 
\medskip 

\noindent
(3) In the third step, we will first construct a boundary map between the boundaries at infinity of both covering spaces, and then deduce from step (2) that the boundary map is quasi-conformal. 
\medskip 

\noindent 
(4) The last step closely follows the last step in Thurston's approach. The goal is to prove that the boundary map is conformal by completing the following observations: 
\medskip 

\noindent
(i) The boundary map is leafwise differentiable a.e. in an appropriate way.
\smallskip 

\noindent
(ii) The excentricity function (which measures how far the boundary map is from being conformal) is uniformly bounded a.e. 
\smallskip 

\noindent
(iii) The excentricity function is constant a.e. 
\smallskip 

\noindent
(iv) The leafwise differential of the boundary map is conformal a.e. and then the boundary map itself is conformal. 
\smallskip 

\noindent
Finally, in that case, the boundary map extends to a leafwise isometry between the covering spaces that induces a leafwise isometry between the original spaces being integrable homotopic to the original homotopy equivalence. 

\subsection*{Homotopy equivalences and bundles morphisms.} 

Let $M_1$ and $M_2$ be two connected closed hyperbolic manifolds of dimension \mbox{$n\geq 3$} with fundamental groups $\Gamma_1$ and $\Gamma_2$.  Let $M_{\rho_1}$ and $M_{\rho_2}$ be the suspension of two representations $\rho_1 : \Gamma_1 \to G$ and $\rho_2 : \Gamma_2 \to G$ into a homeomor\-phism group $G$ acting on a compact Hausdorff topological space $F$. In particular, $M_{\rho_1}$ and $M_{\rho_2}$ are fibre bundles over $M_1$ and $M_2$ with fibre $F$. 
%Observe also that if $\rho_1(\Gamma_1)$ preserves an ergodic measure on $F$ with full support, then the action of $\Gamma_1$ on $F$ has a dense orbit. This implies that $\M_{\rho_1}$, like any other space that is homotopy equivalent to $\M_{\rho_1}$, is connected. 
Observe also that, under the hypotheses of Theorem~\ref{thm:Mostowsuspension}, both spaces $M_{\rho_1}$ and $M_{\rho_2}$ are connected. Our aim is to prove the following result:

\begin{proposition} \label{prop:lifting} 
Any leafwise homotopy equivalence $h : M_{\rho_1} \to M_{\rho_2}$ is  integrable homotopic to a leafwise homotopy equivalence $H : M_{\rho_1} \to M_{\rho_2}$ such that 
\medskip 

\noindent
(i) $H$ is a morphism of fibre bundles, 
\medskip 

\noindent
(ii) $H$ is leafwise smooth. 
\end{proposition}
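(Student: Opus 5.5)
Given a leafwise homotopy equivalence $h$, I will produce $H$ in three steps: first induce a homotopy equivalence $f : M_1 \to M_2$ of the bases, then use it to build a fibre-bundle morphism covering $f$ that is leafwise homotopic to $h$, and finally smooth along the leaves.

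\textbf{Step 1: the map on bases.} Fix a leaf $L_y = q(\tilde M_1\times\{y\})$ of $M_{\rho_1}$. Its image $h(L_y)$ lies in a leaf $L'_{y'}$ of $M_{\rho_2}$. Lift the restriction $h|_{L_y}\circ q$ to the universal coverings. This gives a map $\tilde h_y : \tilde M_1 \to \tilde M_2$, unique up to the deck group $\Gamma_2$. The map $\pi_2\circ h : M_{\rho_1}\to M_2$ induces a homomorphism on fundamental groups. Since $\pi_1 : M_{\rho_1}\to M_1$ has totally disconnected (or at least suitably nice) fibre $F$, I will use the leafwise homotopies and the homotopy inverse $h'$ to show that $\pi_2\circ h$ restricted to a leaf factors, up to homotopy, through a map $f : M_1\to M_2$. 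Equivalently, $\tilde h_y$ is $\Gamma_1$-$\Gamma_2$ equivariant with respect to a homomorphism $\theta:\Gamma_1\to\Gamma_2$ that does not depend on $y$. For the independence of $y$, I will argue that $\theta$ is locally constant in $y$, because $h$ is continuous transversally, and that it is constant on orbits of $\Gamma_1$ up to inner automorphisms. Doing the same with $h'$ gives $\theta'$, and the integral homotopies show that $\theta'\circ\theta$ and $\theta\circ\theta'$ are inner. Hence $\theta$ is an isomorphism. By asphericity of the bases, $f$ is then a homotopy equivalence.

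\textbf{Step 2: the bundle morphism.} Let $\tilde f:\tilde M_1\to\tilde M_2$ be a $\theta$-equivariant lift, which I take to be smooth, for instance the barycentric or harmonic straightening. Define $\Phi : \tilde M_1\times F\to \tilde M_2\times F$ by $\Phi(x,y) = (\tilde f(x), \psi(y))$. Here $\psi(y)$ is the fibre coordinate of the leaf containing $h(q(x_0,y))$, normalised by a choice of lift at a base point $\tilde x_0$. Equivariance $\psi(\rho_1(\gamma)y) = \rho_2(\theta(\gamma))\psi(y)$ comes from the equivariance of the lifts $\tilde h_y$. So $\Phi$ descends to a map $H : M_{\rho_1}\to M_{\rho_2}$ that covers $f$, maps fibres to fibres and leaves to leaves, and is leafwise smooth.

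\textbf{Step 3: the integrable homotopy.} To see that $h$ and $H$ are integrably homotopic, I compare $\tilde h_y$ and $\tilde f$ on each plaque $\tilde M_1\times\{y\}$. Both are $\theta$-equivariant maps into $\tilde M_2\times\{\psi(y)\}$, and $\tilde M_2$ is uniquely geodesic. So the straight-line homotopy along geodesics is equivariant, stays in the leaves, and is transversally continuous, because $\tilde h_y$ depends continuously on $y$. Finally, $H$ is still a leafwise homotopy equivalence, since it is integrably homotopic to $h$.

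\textbf{Main obstacle.} The delicate point is choosing the lifts $\tilde h_y$ continuously in $y$ with a single homomorphism $\theta$. Leaves are not simply connected in general, the stabilisers $\Gamma_y$ vary, and $y\mapsto\psi(y)$ must come out continuous. I would first try a local-to-global argument: work on the product $\tilde M_1\times F$ and lift $h\circ q$ to $\tilde M_2\times F$, using the fact that $q_2$ is a covering on each slice and a product chart transversally. This gives a continuous lift on a neighbourhood of $\{\tilde x_0\}\times F$. It then extends by path lifting, using compactness of $F$ to glue finitely many local lifts. If the lifts glue inconsistently, I would fall back on using the connectedness provided by the dense orbit to propagate a single choice.
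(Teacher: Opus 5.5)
You follow the paper's route: lift $h$ to $\tilde H:\tilde M_1\times F\to\tilde M_2\times F$, extract a fibre map (your $\psi$, the paper's $k$) and a homomorphism (your $\theta$, the paper's $\bar h_\ast$), then rebuild $H$ from $(\tilde f(x),\psi(y))$ as in Lemma~\ref{lem:bundle}. Your Step~3 makes explicit the integrable homotopy that the paper only asserts. It is correct once a $\theta$-equivariant lift exists, using the $\Gamma_2$-invariant hyperbolic metric on $\tilde M_2$ and equivariance of the whole family, not of each $\tilde h_y$.

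The gap is the step you flag as the main obstacle, and your proposed fix does not reach it. A slice $\tilde M_1\times\{y\}$ is preserved only by the stabiliser $\Gamma_{1,y}$, which is trivial when leaves are simply connected. So no homomorphism $\theta_y$ on all of $\Gamma_1$ is attached to a leaf, and the argument ``locally constant, constant on orbits up to inner automorphisms, hence constant by density'' has nothing to act on. What a continuous lift actually satisfies (such a lift does exist for totally disconnected $F$) is
\[
\tilde H\big(\gamma\cdot(x,y)\big)=\delta(\gamma,y)\cdot\tilde H(x,y),
\]
where $\delta:\Gamma_1\times F\to\Gamma_2$ is a locally constant cocycle over $\rho_1$. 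Correspondingly, your $\psi$ only satisfies $\psi(\rho_1(\gamma)y)=\rho_2(\delta(\gamma,y))\psi(y)$.

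Three facts pin this down:
\begin{itemize}
\item Changing the lift replaces $\delta$ by $\sigma(\rho_1(\gamma)y)\,\delta(\gamma,y)\,\sigma(y)^{-1}$ for some locally constant $\sigma:F\to\Gamma_2$.
\item An integrable homotopy does not change $\delta$, since $\delta$ is locally constant in $t$.
\item A bundle morphism over $\bar h$ has, for a suitable lift, $\delta\equiv\bar h_\ast$.
\end{itemize}
So the statement is equivalent to $\delta$ being cohomologous to a homomorphism, and that is not automatic.

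In fact it fails under the hypotheses of Theorem~\ref{thm:Mostowsuspension}. Take:
\begin{itemize}
\item a torsion-free cocompact lattice $\Lambda<\isomH$ and a non-normal finite-index subgroup $\Gamma<\Lambda$;
\item the normal core $\Gamma'$ of $\Gamma$ in $\Lambda$, with closure $\overline{\Gamma'}$ in $\hat\Gamma$;
\item an element $\lambda\in\Lambda$ not normalising $\Gamma$.
\end{itemize}
The McCord solenoid $\Gamma\bs(\H^n\times\hat\Gamma)$, whose Haar measure is ergodic with full support, equals $\Gamma'\bs(\H^n\times\overline{\Gamma'})$. The map $(x,z)\mapsto(\lambda x,\hat\alpha(z))$, with $\alpha=\mathrm{Ad}_\lambda|_{\Gamma'}$, induces a leafwise isometric homeomorphism $h$. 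Its cocycle equals $\alpha(\gamma)$ on $\Gamma'\times\overline{\Gamma'}$.

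Suppose some $\sigma$ untwisted it to a homomorphism $\theta$. Choose an open normal $U\subset\overline{\Gamma'}$ such that $\sigma$ is constant on cosets of $U$. Then $\theta=\mathrm{Ad}_\mu$ on $U\cap\Gamma$, with $\mu=\sigma(1)\lambda$. Since a lattice has trivial centraliser in $\isomH$, $\theta=\mathrm{Ad}_\mu$ on all of $\Gamma$. Hence $\mu\Gamma\mu^{-1}\subseteq\Gamma$, which is an equality by covolume, so $\lambda$ normalises $\Gamma$, a contradiction.

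So no argument yields a $y$-independent $\theta$ in general. The paper's one-line claim that $h_\ast$ induces $\bar h_\ast$ glosses over the same point, as does the groupoid version in Remark~\ref{rem:groupoids}. What is true is the $\delta$-equivariant lift with uniform quasi-isometry constants, which is the setting of Remark~\ref{rem:tubes}, where $\tilde H$ need not respect the first projection.
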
 

To prove the proposition, we start by stating and proving a preliminary lemma: 

\begin{lemma} \label{lem:bundle}
Assume that $\rho_1 : \Gamma_1 \to G$ and $\rho_2 : \Gamma_2 \to G$ are conjugated, that is, there are an isomorphism $\varphi : \Gamma_1 \to \Gamma_2$ and a homeomorphim $k \in G$ such that
$$
\rho_2(\varphi(\gamma_1)) \circ k = k \circ \rho_1(\gamma_1)
$$
for all $\gamma_1 \in \Gamma_1$. Then there is a homotopy equivalence $H : M_{\rho_1} \to M_{\rho_2}$ such that 
\medskip 

\noindent
(i) $H$ is a morphism of fibre bundles with base map $\bar h : M_1 \to M_2$,
\medskip 

\noindent
(ii) $H$ is leafwise smooth, 
\medskip 

\noindent
(ii) $\varphi = \bar h_\ast$.
\end{lemma}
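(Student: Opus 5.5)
We construct $H$ directly from the data $(\varphi,k)$ by first producing a base map $\bar h : M_1 \to M_2$ realising $\varphi$. Since $M_1$ and $M_2$ are closed hyperbolic manifolds, hence aspherical, they are $K(\Gamma_i,1)$ spaces. Therefore the isomorphism $\varphi : \Gamma_1 \to \Gamma_2$ is induced by a homotopy equivalence $\bar h : M_1 \to M_2$ with $\bar h_\ast = \varphi$, after fixing base points and using the standard classification of maps into aspherical spaces. Smoothing by a small homotopy, we may assume $\bar h$ is smooth.

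Next we lift $\bar h$ to the universal coverings. This gives a smooth map $\tilde h : \tilde M_1 \to \tilde M_2$ which is $\varphi$-equivariant: $\tilde h(\gamma_1 x) = \varphi(\gamma_1)\tilde h(x)$. On the product we define
$$
\tilde H : \tilde M_1 \times F \to \tilde M_2 \times F, \qquad \tilde H(x,y) = (\tilde h(x), k(y)).
$$
Using the conjugacy relation $\rho_2(\varphi(\gamma_1))\circ k = k\circ\rho_1(\gamma_1)$, we get
$$
\tilde H(\gamma_1.(x,y)) = (\varphi(\gamma_1)\tilde h(x), \rho_2(\varphi(\gamma_1))(k(y))) = \varphi(\gamma_1).\tilde H(x,y).
$$
Hence $\tilde H$ descends to a continuous map $H : M_{\rho_1} \to M_{\rho_2}$. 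This $H$ sends horizontal leaves to horizontal leaves and covers $\bar h$, so $\pi_2\circ H = \bar h\circ\pi_1$, which gives (i). It is leafwise smooth because it is $\tilde h$ on each plaque, which gives (ii). The condition $\varphi=\bar h_\ast$ holds by construction.

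It remains to show that $H$ is a homotopy equivalence, which is the main point requiring care. Let $\bar h'$ be a homotopy inverse of $\bar h$, realising $\varphi^{-1}$, with equivariant lift $\tilde h'$. Define $H'$ from $(\tilde h', k^{-1})$; this is legitimate because the conjugacy relation is symmetric, giving $\rho_1(\varphi^{-1}(\gamma_2))\circ k^{-1} = k^{-1}\circ \rho_2(\gamma_2)$. The composite $H'\circ H$ is induced by $(\tilde h'\circ\tilde h, \mathrm{id}_F)$, where $\tilde h'\circ \tilde h$ is an equivariant lift of $\bar h'\circ\bar h$ inducing the identity on $\Gamma_1$.

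The obstacle is the base point: the lift commutes with deck transformations only up to the choice of lift. We therefore choose the lifts so that $\tilde h'\circ\tilde h$ commutes with the $\Gamma_1$-action on the nose. Then the straight-line (geodesic) homotopy in $\tilde M_1\cong\H^n$,
$$
(x,y,t)\mapsto (\sigma_x(t), y),
$$
where $\sigma_x$ is the geodesic from $\tilde h'\tilde h(x)$ to $x$, is $\Gamma_1$-equivariant because deck transformations are isometries. It therefore descends to a homotopy from $H'\circ H$ to the identity which preserves each leaf. The same argument applies to $H\circ H'$. So $H$ is a (leafwise) homotopy equivalence, with integrable homotopies.
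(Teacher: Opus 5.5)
Your proposal is correct and follows the paper's proof. It realises $\varphi$ by a map $\bar h$ between the $K(\pi,1)$ spaces $M_1,M_2$, sets $\tilde H(x,y)=(\tilde h(x),k(y))$, checks $\varphi$-equivariance from the conjugacy relation, and uses the analogous map built from $(\tilde h',k^{-1})$ as homotopy inverse; your geodesic straight-line homotopy in $\H^n$ just makes explicit what the paper only asserts (that $\tilde h'\circ\tilde h$ and $\tilde h\circ\tilde h'$ are equivariantly homotopic to the identity), and it also shows the homotopies are integrable.
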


\begin{proof} Since $M_1$ and $M_2$ are $K(\pi,1)$ spaces, there is a homotopy equivalence $\bar h : M_1 \to M_2$ (with homotopy inverse $\bar h' : M_2 \to M_1$) inducing the isomorphism $\varphi : \Gamma_1 \to \Gamma_2$. Then $\bar h$ and $\bar h'$ lift to maps $\tilde h : \tilde M_1 \to \tilde M_2$ and  $\tilde h': \tilde M_2 \to \tilde M_1$ such that $\tilde h \circ \tilde h'$ and $\tilde h' \circ \tilde h$ are equivariantly homotopic to the identity. Now consider the map
$$
\tilde H : \tilde M_1 \times F \to \tilde M_2 \times F
$$
given by 
\begin{equation} \label{eq:homotopyequivalence}
\tilde H  (x,y) = \Big(\tilde h (x) , k(y) \Big),
\end{equation}
where $x \in \tilde M_1 $ and $y \in F$.  This map satisfies the following conditions: 
\medskip 

\noindent
(1) $\tilde H $ is a homotopy equivalence (with homotopy inverse $\tilde H'$ defined in the same way), 
\medskip

\noindent
(2) $\tilde H$ is equivariant for the diagonal actions of $\Gamma_1$ and $\Gamma_2$:
\begin{align*}
\bar h_\ast(\gamma_1) . \tilde H  (x,y) & = \bar h_\ast(\gamma_1) . \Big(\tilde h (x) , k(y) \Big) \\
 & = \Big( \bar h_\ast(\gamma_1) \tilde h (x) , \rho_2\big(\bar h_\ast(\gamma_1)\big) \big(k(y)\big) \Big) \\
 & = \Big( \tilde h (\gamma_1. x)  , k\big(\rho_1(\gamma_1) (y) \big) \Big) \\
 & = \tilde H  \Big( \gamma_1. x  ,  \rho_1(\gamma_1) (y) \Big)  \\
 & = \tilde H \Big( \gamma_1 . \big(x, y\big) \Big).
\end{align*}

\noindent
for all $\gamma_1 \in \Gamma_1$, all $x \in \tilde M_1$ and all $y \in F$. 
\medskip

\noindent
(3) $\tilde H $ preserves the horizontal laminations of 
$\tilde M_1 \times F$ and $\tilde M_2 \times F$. 
\medskip

\noindent
Therefore $\tilde H $ induces a leafwise homotopy equivalence 
$H : M_{\rho_1} \to M_{\rho_2}$ that makes the following diagram commutative: 
\begin{equation*}
\begin{tikzcd}[row sep=7ex,column sep ={10ex}]
\tilde M_1 \times F \arrow[swap]{d}{q_1} \arrow{r}{\tilde H} & \tilde M_2 \times F \arrow{d}{q_2} \\
M_{\rho_1} \arrow{r}{H} & M_{\rho_2}.
\end{tikzcd}
\end{equation*} 
This implies that $H$ is a morphism of fibre bundles
\begin{equation*}
\begin{tikzcd}[row sep=7ex,column sep ={10ex}]
M_{\rho_1} \arrow[swap]{d}{\pi_1} \arrow{r}{H} &  M_{\rho_2} \arrow{d}{\pi_2} \\
M_1 \arrow{r}{\bar h} & M_2.
\end{tikzcd}
\end{equation*}
inducing the homotopy equivalence $\bar h : M_1 \to M_2$ on the base spaces. The morphims $H' \circ H$ and $H  \circ H'$ are homotopic to the identity and then the fibre bundles induced by $\bar h' \circ \bar h$ and $\bar h \circ \bar h'$ are isomorphic to $M_{\rho_1}$ and $M_{\rho_2}$. %PUEDE RESULTAR CONFUSO
%Notice that, since the base spaces $M_1$ and $M_2$ are homeomorphic, the total spaces $M_{\rho_1}$ and $M_{\rho_2}$ are also homeomorphic.
Finally, observe that the maps $\bar h$ and $\bar h'$ can be appropiately approximated to ensure that $H$ and $H'$ are leafwise smooth. 
\end{proof}

\begin{proof}[Proof of Proposition~\ref{prop:lifting}]
In general, any leafwise homotopy equivalence 
$$h : M_{\rho_1} \to M_{\rho_2}$$ 
can be lifted to an equivariant leafwise homotopy equivalence 
$$
\tilde H  : \tilde M_1 \times F \to \tilde M_2 \times F.
$$
As $\tilde H$ preserves the horizontal laminations of $\tilde M_1 \times F$ and $\tilde M_2 \times F$, it induces a continuous map on the second factor $F$. But the hyperbolic manifolds $\tilde M_1$ and $\tilde M_2$ are contractible, diffeomorphic to $\R^n$. This implies that the map $k$ induced by $\tilde H$ on the second factor is a homotopy equivalence. In fact, 
%by the homotopy lifting property of $M_{\rho_1}$ and  $M_{\rho_2}$,
by the unique homotopy lifting property for the covering maps $q_1$ and $q_2$, the map $k$ is a homeomorphism. Then the isomorphism $h_\ast$ between the fundamental groups of $M_{\rho_1}$ and $M_{\rho_2}$ induces an isomorphism $\bar h_\ast$ between $\Gamma_1$ and $\Gamma_2$.
%SUCESION EXACTA CORTA DE HOMOTOPIA
We now have all the ingredients to construct, using Lemma~\ref{lem:bundle}, a leafwise bundle morphism $H : M_{\rho_1} \to M_{\rho_2}$ that is integrable homotopic to $h$. 
\end{proof}

\begin{remark}[Classifying groupoids and bundles] \label{rem:groupoids}
In fact, the construction of the homotopy equivalence $H$ in Proposition~\ref{prop:lifting} can be reformulated from a more conceptual argument involving Morita equivalence for groupoids. To do so, we shall first assume that holonomy coverings of the leaves of $M_{\rho_1}$ are simply connected. Then the holonomy groupoid of $M_{\rho_1}$ becomes isomorphic to homotopy groupoid, that is,  
$$
Hol_{\rho_1} = \Gamma_{\rho_1} \bs \big( \tilde M_1 \times \tilde M_1 \times F \big)
\group{\alpha}{\beta} M_{\rho_1}.
$$ 
Now the foliated space $\tilde M_1 \times F$ defines a groupoid equivalence (in the sense of J. Renault \cite{Renault1982}; see also  \cite{MuhlyRenaultWilliams}) between the holonomy groupoid 
$$
Hol_{\rho_1} = \Gamma_{\rho_1} \bs \big( \tilde M_1 \times \tilde M_1 \times F \big)
\group{\alpha}{\beta} M_{\rho_1}
$$ 
and the transformational groupoid 
$$
\Gamma_1 \times F \group{\alpha}{\beta} F.
$$
Indeed, the orbit space of the left action of $Hol(\L_{\rho_1})$ on $\tilde M_1 \times F$ is homeomorphic to $F$, whereas the orbit space of the right action of the transformational groupoid on $\tilde M_1 \times F$ is leafwise diffeomorphic to $M_{\rho_1}$, and both actions commute. As $\tilde M_1$ is contractible, the holonomy groupoid $Hol_{\rho_1}$ is \emph{classifying} (see \cite{Haefligerclassifiant} and \cite{HectorBC}). 
Given a leafwise homotopy equivalence $h : M_{\rho_1} \to M_{\rho_2}$, the holonomy groupoids 
 $Hol_{\rho_1}$ and  $Hol_{\rho_2}$ are isomorphic and hence the transformational groupoids 
 $$
\Gamma_1 \times F \group{\alpha}{\beta} F \quad \text{and} \quad 
\Gamma_2 \times F \group{\alpha}{\beta} F
$$
are also isomorphic. This means that both groups are isomorphic and their actions are conjugated. Now we can apply Lemma~\ref{lem:bundle} to obtain a homotopy equivalence $H : M_{\rho_1} \to M_{\rho_2}$ which is integrable homotopic to $h$. 

In fact, the fibre bundles $M_{\rho_1}$ and $M_{\rho_2}$ are themselves classifying spaces for the transverse structures represented by $Hol_{\rho_1}$ and $Hol_{\rho_2}$ (in the sense of \cite{Haefligerclassifiant}) under two aditional conditions: (i) the representations $\rho_1$ and $\rho_2$ are faithful, (ii) any element of $\rho_1(\Gamma_1)$ and $\rho_2(\Gamma_2)$ is the identity if it is the identity on some non-empty open subset of $F$.

The condition about the holonomy covering of the leaves has been only used to identify the holonomy groupoid with the homotopy groupoid. The argument above literally applies to the homotopy groupoids of $M_{\rho_1}$ and $M_{\rho_2}$ as the universal covering of the leaves of $M_{\rho_1}$ and $M_{\rho_2}$ is contractible.
\end{remark}

\begin{remark}[Classifying solenoids] \label{rem:holonomy}
Although the above arguments apply to inverse limits such as those appearing in the statement of Theorem~\ref{thm:Mostowinvlimit}, their construction involves constraints on the foliated structure that avoid the use of groupoids and simplify homotopical arguments. 
If $\X_1$ and $\X_2$ are inverse limits of finite regular coverings of $M_1$ and $M_2$, they are McCord solenoids obtained as the suspension of two representations $\rho_1 : \Gamma_1 \to G_1$ and $\rho_1 : \Gamma_1 \to G_1$, where $G_1 = \hat \Gamma_1$ and $G_2 = \hat \Gamma_2$ are the profinite completions of the fundamental groups $\Gamma_1$ of $M_1$ and $\Gamma_2$ of $M_2$. In this setting, if $\X_1$ and $\X_2$ have simply connected leaves, the groups $\Gamma_1$ and $\Gamma_2$ are isomorphic and the representations $\rho_1$ and $\rho_2$ are conjugated. Indeed, if the leaves of the $\X_1$ and $\X_2$ are simply connected, they are contractible, diffeomorphic to $\R^n$. Then the foliated structures of $\X_1$ and $\X_2$ are classifying, determined by the natural left actions of $\Gamma_1$ and $\Gamma_2$ on $G_1$ and $G_2$. Thus $\rho_1$ and $\rho_2$ are faithful and \emph{a fortiori} conjugated. However, as observed earlier in Remark~\ref{rem:reducingthm2tothm1}, we do not need the condition on the leaves to see that $\rho_1$ and $\rho_2$ are conjugated if $\X_1$ and $\X_2$ are homotopically equivalent. 

More generally, if $\X_1$ and $\X_2$ are inverse limit of finite coverings, not necessarily regular, we can consider the foliated spaces $E_{\rho_1}$ and $E_{\rho_2}$ obtained from representations of $\Gamma_1$ and $\Gamma_2$ into the isometry groups $G_1$ and $G_2$ of the fibres of $\X_1$ and $\X_2$. Recall that $\X_1$ and $\X_2$ are fibre bundles associated to the principal bundles $E_{\rho_1}$ and $E_{\rho_2}$. Arguing directly on $\X_1$ and $\X_2$ (or passing to $E_{\rho_1}$ and $E_{\rho_2}$ and applying the previous remark), we similarly deduce that the representations $\rho_1$ and $\rho_2$ are conjugated if $\X_1$ and $\X_2$ are homotopically equivalent. 
\end{remark}

\subsection*{Quasi-isometry.} Let $g_1$ and $g_2$ be two leafwise Riemannian metrics on $M_{\rho_1}$ and $M_{\rho_2}$. We shall assume that both are hyperbolic so that the leaves of $M_{\rho_1}$ and $M_{\rho_2}$ have negative constant curvature equal to $-1$, although this property will not be used in this paragraph. Consider the pull-backs $\tilde g_1 = \pi_1^\ast g_1$ and $\tilde g_2 = \pi_2^\ast g_2$ to the foliated products $\tilde M_1 \times F$ and $\tilde M_2 \times F$. Now we shall prove the foliated version of Lemma 5.9.1 from Thurston's notes \cite{ThurstonNewBook}: 

\begin{proposition} \label{prop:quasi-isometry}
Any leafwise homotopy equivalence $h : M_{\rho_1} \to M_{\rho_2}$ satisfying the condition of Proposition~\ref{prop:lifting} is a leafwise quasi-isometry which lifts to an equivariant quasi-isometry 
$$
\tilde H : \tilde M_1 \times F \to \tilde M_2 \times F
$$ 
with uniform constants. 
\end{proposition}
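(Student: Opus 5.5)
We follow the proof of Lemma 5.9.1 in \cite{ThurstonNewBook}, using compactness of $M_{\rho_1}$ and $M_{\rho_2}$ to make every constant uniform in the transverse parameter. By Proposition~\ref{prop:lifting}, $h$ is a leafwise smooth morphism of fibre bundles. Its lift has the form $\tilde H(x,y)=(\tilde h_y(x),k(y))$, where $k$ is a homeomorphism of $F$. Each $\tilde h_y:\tilde M_1\times\{y\}\to\tilde M_2\times\{k(y)\}$ is smooth. The lift is equivariant for the diagonal actions, through $\varphi=\bar h_\ast$ and the conjugacy relation of Lemma~\ref{lem:bundle}. We do the same for the homotopy inverse $H'$, with lift $\tilde H'$. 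The leaves of $\tilde M_i\times F$ carry the metrics $\tilde g_i$, which are complete and vary continuously with $y$. Since $\tilde g_i$ is $\Gamma_i$-invariant, it descends to the compact space $M_{\rho_i}$.

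\textbf{Upper bound.} The leafwise differential $d\tilde H$ has norm bounded by a constant $C$. This holds because $\|dH\|$ is a continuous function on the compact unit tangent bundle $M^1_{\rho_1}$: the map $H$ is leafwise smooth and its derivatives vary continuously transversally. Joining two points of a leaf by a minimizing geodesic then gives $d_2(\tilde H(p),\tilde H(q))\le C\,d_1(p,q)$. The same argument applies to $\tilde H'$ with a constant $C'$.

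\textbf{Lower bound.} The integrable homotopy between $H'\circ H$ and the identity is a leafwise path $t\mapsto \Phi_t(p)$. It lifts to a leafwise homotopy on $\tilde M_1\times F$ between $\tilde H'\circ\tilde H$ and a deck-like map. After composing with an element of $\Gamma_1$, we may normalize so that the homotopy ends at the identity; this uses the unique lifting property of $q_1$ as in Proposition~\ref{prop:lifting}. The tracks $t\mapsto\tilde\Phi_t(p)$ are equivariant. Their lengths therefore define a function on the compact space $M_{\rho_1}$, and we need this function to be bounded by a constant $D$.

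If the homotopy is only continuous, we first replace it by a leafwise smooth one. This can be done by straightening along geodesics, which is possible because the plaques are geodesically convex by (A3). Alternatively, we can use the continuity of the map $p\mapsto d_1(\tilde H'\tilde H(p),p)$, which is $\Gamma_1$-invariant and hence bounded by compactness. This second route avoids smoothing altogether and gives
\[
d_1(\tilde H'\tilde H(p),p)\le D .
\]
Combining with the upper bound for $\tilde H'$,
\[
d_1(p,q)\le 2D + d_1(\tilde H'\tilde H p,\tilde H'\tilde H q)\le 2D+C'\,d_2(\tilde H p,\tilde H q),
\]
which is the lower quasi-isometric inequality. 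The symmetric argument for $\tilde H\circ\tilde H'$ shows that every point of each target leaf lies within distance $D'$ of the image. Hence each $\tilde h_y$ is a $(K,\epsilon)$-quasi-isometry with $K,\epsilon$ independent of $y$, and descending gives that $h$ is a leafwise quasi-isometry.

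\textbf{Main obstacle.} The delicate point is the uniformity of $D$ and $D'$ in the transverse variable. This needs continuity of $p\mapsto d_1(\tilde H'\tilde H(p),p)$ on $\tilde M_1\times F$, including in the $F$ direction, where leafwise distance is not a priori continuous. We would handle it by working in the product $\tilde M_1\times F$, where the leafwise distance is just the $\tilde g_1$-distance on $\tilde M_1\times\{y\}$ with $\tilde g_1$ pulled back from $M_1$. Alternatively, it suffices to bound the homotopy tracks locally in finitely many flow boxes of a box decomposition (Definition~\ref{def:boxdecomposition}) and use compactness.
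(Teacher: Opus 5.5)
Your proposal is correct and follows the same route as the paper. Compactness gives a uniform leafwise Lipschitz constant for $\tilde H$ and $\tilde H'$, and the lifted integral homotopies put $\tilde H'\circ\tilde H$ and $\tilde H\circ\tilde H'$ within a uniform distance of the identity; together these give quasi-isometry constants independent of $y\in F$. You also make explicit the combining inequality, the coarse surjectivity and the normalization of lifts, which the paper leaves implicit. One small slip: $g_1$ is an arbitrary leafwise metric, not one pulled back from $M_1$. So your first route to uniformity should instead use that, by compactness of $M_{\rho_1}$, every $\tilde g_1^y$ is uniformly bi-Lipschitz to the pull-back of a fixed metric on $M_1$ (or simply use your flow-box argument); then $p\mapsto d_1(\tilde H'\tilde H(p),p)$ is bounded as you claim.
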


\begin{proof} By compactness, the restrictions of $h$ and $h'$ to the leaves of $M_{\rho_1}$ and $M_{\rho_2}$ are uniformly Lipschitz with the same Lipschitz constant $C_1$. Therefore the liftings 
$$
\tilde H : \tilde M_1 \times F \to \tilde M_2 \times F \quad \text{and} \quad
\tilde H' : \tilde M_2 \times F \to \tilde M_1 \times F  
$$
also restrict to uniformly Lipschitz maps on the horizontal leaves with the same Lipschitz constant $C_1 > 0$. By definition, the composition $h' \circ h$ and $h \circ h'$ are homotopic to the identity by an integral homotopy. Therefore $\tilde H' \circ \tilde H$ and $\tilde H \circ \tilde H'$ are homotopic to the identity by an integral homotopy (keeping invariant the second coordinate up to homeomor\-phism). Then the restriction of the compositions $h' \circ h$ and $h \circ h'$ are uniformly at bounded distance from the identity with the same bound $C_2 \geq 0$. The compositions $\tilde H' \circ \tilde H$ and $\tilde H \circ \tilde H'$ restrict again to maps at bounded distance of the identity in a uniform way and with the same bound $C_2 \geq 0$. In summary, restricted to the leaves, the homotopy equivalences $h$ and $h'$, as well their lifted maps $\tilde H$ and $\tilde H'$, are quasi-isometries in Gromov's sense (according to the definition given in \cite{Ghys-delaHarpe}) with the same constants $C_1 >0$ and $C_2 \geq 0$. 
\end{proof}

\subsection*{Boundary at infinity.}

For each point $y \in F$, let $\partial^y \tilde M_ 1$ be the visual boundary of $\tilde M_1 \times \{y\}$ with respect to the restriction $\tilde g_1^y$ of $\tilde g_1$ to $\tilde M\times \{y\}$. Let us recall the definition. If $\alpha(t)$ and $\beta(t)$ are geodesic rays in $\tilde M_ 1 \times \{y\}$ with respect to the metric $\tilde g_1^y$, parametrized by arc length, we say that they are \emph{asymptotic} (defining the same point $\alpha (+\infty) = \beta (+\infty)$ in $\partial^y \tilde M_ 1$) if there exists $s>0$ such that the distance between $\alpha(t)$ and $\beta(t+s)$ tends to 0 as $t \to +\infty$. 
%DEFINITION DE strongly asymptotic, EQUIVALENTE A asymptotic PARA ESPACIOS HIPERBOLICOS, PERO NO PARA VARIEDADES DE HADARMARD.
Here the distance is taken with respect to Riemannian metric $\tilde g_1^y$. This defines an equivalence relation between geodesic rays in $\tilde M_1 \times \{y\}$, and 
$\partial^y \tilde M_1$ is the set of equivalence classes. 

If $x\in \tilde M_1$, then any geodesic ray parametrized by arc length in $\tilde M_1$ is asymptotic to one and only one ray starting at $x$, defining a bijection between $\partial^y \tilde M_1$ and the unit tangent space $T^1_{(x,y)} (\tilde M_1 \times \{y\})$ at the point $(x,y)$. This gives $\partial^y \tilde M_1$ a differentiable structure, which is independent of the choice of $x$, such that $\partial^y \tilde M_1$ is diffeomorphic to the sphere $S^{n-1}$.

\begin{definition} \label{def:boundary}
We call \emph{boundary at infinity} of the foliated space $\tilde M_1 \times F$ (with respect to the metric $\tilde g_1$) to the union of visual boundaries
\begin{equation*}
 \partial (\tilde M_1 \times F) = \bigcup_{y\in F}\partial^ y \tilde M_1.
 \end{equation*}
This is a topological space which is leafwise diffeomorphic to $S^{n-1}\times F$. In other words, 
the boundary $\partial (\tilde M_1 \times F)$ is the total space of a trivial fibration over $F$. As the action of $\Gamma_1$ preserves $\tilde g_1$, its fibre is the \emph{universal boundary} $S^{n-1}_\infty$ in the sense of Thurston \cite{Thurston2}, and it is endowed with a canonical conformal structure. Later, in Remark~\ref{rem:tubes}, we shall see how to construct a global trivialisation of $\partial (\tilde M_1 \times F)$ from a global uniformisation of $M_{\rho_1}$.
\end{definition}

%As a straightforward consequence of Proposition~\ref{prop:quasi-isometry}, we obtain the following lemma, which is the foliated version of Proposition 5.9.2 from Thurston's notes  \cite{ThurstonNewBook}: 
As a consequence of Proposition~\ref{prop:quasi-isometry}, we obtain a foliated version of Morse lemma (see \cite{Bourdon} and \cite{Lucker} for a detailed formulation) from which the continuity of the boundary map is derived. Here we follow Thurston's more conceptual approach by adapting Proposition 5.9.2 from his notes \cite{ThurstonNewBook} to our foliated context:

\begin{lemma} \label{lemma:geodesics_go_to_quasi_geodesics}
The homotopy equivalence 
$$
\tilde H : \tilde M_1 \times F \to \tilde M_2 \times F
$$
sends each geodesic $\alpha(t)$ in $\tilde M_1 \times \{y\}$ with respect to the metric $\tilde g_1^y$ to a curve that remains at a bounded distance from a geodesic $\beta(t)$ in $\tilde M_2 \times \{k(y)\}$ with respect to the metric $\tilde g_2^{k(y)}$. \hfill \qed
\end{lemma}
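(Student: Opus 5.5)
The plan is to reduce the statement to the classical Morse lemma (stability of quasi-geodesics in $\H^n$, as in Proposition 5.9.2 of \cite{ThurstonNewBook}), applied leaf by leaf, and then to check that all the constants can be chosen independently of the leaf. Fix $y \in F$ and a geodesic $\alpha$ in $\tilde M_1 \times \{y\}$ for $\tilde g_1^y$. Since $\tilde H$ preserves the horizontal laminations and acts on the second factor by the homeomorphism $k$, it restricts to a map $\tilde H_y : (\tilde M_1 \times \{y\}, \tilde g_1^y) \to (\tilde M_2 \times \{k(y)\}, \tilde g_2^{k(y)})$. By Proposition~\ref{prop:quasi-isometry}, $\tilde H_y$ is a $(C_1,C_2)$-quasi-isometry with constants that do not depend on $y$. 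Consequently $\tilde H_y\circ\alpha$ is a $(C_1,C_2)$-quasi-geodesic. It is continuous, and after the smoothing of Proposition~\ref{prop:lifting} it is even $C_1$-Lipschitz.

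Each leaf $(\tilde M_2 \times \{k(y)\}, \tilde g_2^{k(y)})$ is a simply connected complete Riemannian manifold of constant curvature $-1$, so it is isometric to $\H^n$. Completeness holds because it is the lift of a leaf of the compact space $M_{\rho_2}$ carrying a continuous leafwise metric. In particular each such leaf is $\delta$-hyperbolic with the universal constant $\delta$ of $\H^n$. Thurston's argument then applies verbatim inside this single leaf. For a segment $\alpha|_{[a,b]}$, let $\beta_{ab}$ be the geodesic segment joining $\tilde H(\alpha(a))$ and $\tilde H(\alpha(b))$. The projection to $\beta_{ab}$ contracts exponentially at distance $R$ from $\beta_{ab}$. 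Any excursion of $\tilde H\circ\alpha$ to distance larger than $R$ would therefore need a length exponential in $R$ to cover a projected distance that is only linear in the parameter. Comparing this with the quasi-isometry inequality yields a bound $R_0 = R_0(C_1,C_2,\delta)$ on the Hausdorff distance between $\tilde H\circ\alpha|_{[a,b]}$ and $\beta_{ab}$.

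To pass to the full geodesic, let $a \to -\infty$ and $b \to +\infty$. The segments $\beta_{ab}$ all pass within $R_0$ of the fixed point $\tilde H(\alpha(0))$, so Arzelà–Ascoli in $\H^n$ gives a subsequence converging uniformly on compact sets to a complete geodesic $\beta$ in $\tilde M_2 \times \{k(y)\}$. Passing to the limit, the whole curve $\tilde H\circ\alpha$ stays within $R_0$ of $\beta$.

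The main point to watch is uniformity. The bound $R_0$ must depend only on $C_1$, $C_2$ and the curvature, not on $y$. This is exactly what the uniform constants of Proposition~\ref{prop:quasi-isometry} and the fact that all leaves are isometric to $\H^n$ provide. No transverse continuity is needed for this lemma; that issue arises only afterwards, when the continuity of the boundary map is derived.
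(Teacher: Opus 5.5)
Your proposal is correct and takes the paper's route. The paper gives no separate argument: it obtains the lemma by applying Thurston's Proposition 5.9.2 (the Morse lemma in $\H^n$) leaf by leaf, with the uniform quasi-isometry constants of Proposition~\ref{prop:quasi-isometry}. That is exactly your reduction, and your added details are sound: completeness of the lifted leaves (so each is isometric to $\H^n$), the exponential-contraction sketch for segments, and the Arzel\`a--Ascoli passage to a bi-infinite geodesic with a bound independent of $y$. They make explicit the uniformity that the paper later relies on in Proposition~\ref{prop:quasi-conformal}.
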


%Morse-Mostow Lemma. Given $K > 0$, there exists a constant $D = D(K) > 0$ such that for any $(K,\varepsilon)$-quasi-geodesic $\alpha$, there exists a unique geodesic $\beta$ within a $D$-neighborhood of $\alpha$.

\begin{proposition} \label{prop:boundarymap}
The homotopy equivalence 
$$
\tilde H : \tilde M_1 \times F \to \tilde M_2 \times F
$$
induces an equivariant boundary map 
$$
\partial \tilde H : \partial (\tilde M_1 \times F) \to 
\partial (\tilde M_2 \times F)
$$
defining a homeomorphism between both boundaries at infinity.
\end{proposition}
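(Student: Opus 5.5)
The plan is to define $\partial \tilde H$ leaf by leaf, using Lemma~\ref{lemma:geodesics_go_to_quasi_geodesics}, and then check equivariance, bijectivity and continuity, including continuity in the transverse direction.

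\emph{Definition on each leaf.} Fix $y \in F$ and a point $\xi \in \partial^y \tilde M_1$, represented by a geodesic ray $\alpha$ in $\tilde M_1 \times \{y\}$. By Proposition~\ref{prop:quasi-isometry}, $\tilde H \circ \alpha$ is a $(C_1,C_2)$-quasi-geodesic in $\tilde M_2 \times \{k(y)\}$. By Lemma~\ref{lemma:geodesics_go_to_quasi_geodesics}, it stays within a distance $D=D(C_1,C_2)$ of a geodesic ray $\beta$. Because $\tilde g_2^{k(y)}$ is hyperbolic, two rays at bounded Hausdorff distance are asymptotic, so the endpoint $\beta(+\infty)$ is well defined. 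Asymptotic rays $\alpha,\alpha'$ stay at bounded distance, hence so do their images, so $\beta(+\infty)$ depends only on $\xi$. We set $\partial \tilde H(\xi)=\beta(+\infty) \in \partial^{k(y)}\tilde M_2$.

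\emph{Equivariance and bijectivity.} Equivariance follows directly from the equivariance of $\tilde H$ proved in Lemma~\ref{lem:bundle}: each $\gamma_1$ acts by isometries between leaves, so it sends geodesic rays to geodesic rays and shadowing geodesics to shadowing geodesics. This gives $\partial\tilde H \circ \gamma_1 = \bar h_\ast(\gamma_1)\circ \partial \tilde H$. We construct $\partial \tilde H'$ in the same way from the homotopy inverse $\tilde H'$. Since $\tilde H'\circ\tilde H$ is at uniformly bounded distance $C_2$ from the identity along leaves, it moves every ray by a bounded amount and therefore fixes every endpoint. Hence $\partial\tilde H'\circ\partial\tilde H = \mathrm{id}$, and symmetrically $\partial\tilde H\circ\partial\tilde H'=\mathrm{id}$. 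So $\partial \tilde H$ is a bijection, and it covers the homeomorphism $k$ of $F$.

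\emph{Continuity on each leaf.} On a single leaf we argue as in Thurston's Proposition 5.9.2. A neighbourhood basis of $\xi$ in $\partial^y\tilde M_1 \cong S^{n-1}$ is given by shadows of half-spaces cut out by hyperplanes orthogonal to the ray at distance $t$. The image under $\tilde H$ of such a half-space lies, up to the uniform constant $D$, in a half-space orthogonal to $\beta$ at distance roughly $t/C_1 - \mathrm{const}$. As $t\to\infty$ these shrink to $\partial\tilde H(\xi)$.

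\emph{Continuity in the transverse direction.} This is the main obstacle. The issue is that we need the joint topology on $\partial(\tilde M_1\times F) \cong S^{n-1}\times F$, where this identification uses unit tangent vectors at points $(x,y)$. Write $\xi=(v,y)$ with $v\in T^1_x$. If $(v_m,y_m)\to(v,y)$, the rays $\alpha_m$ converge uniformly on compact sets to $\alpha$, because the leafwise metric varies continuously with $y$ in the smooth topology and all leaves share the base $\tilde M_1$ via the pull-back structure. Since $\tilde H$ is continuous and $k$ is a homeomorphism, $\tilde H\circ\alpha_m \to \tilde H\circ\alpha$ uniformly on $[0,T]$ for every $T$.

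The constants $C_1,C_2,D$ are uniform over all leaves. Hence the shadowing geodesics $\beta_m$ agree with $\beta$ up to error $2D$ along a segment of length about $T/C_1$. The leafwise hyperbolic metrics are also uniformly comparable on compact pieces of $\tilde M_2$, since $M_{\rho_2}$ is compact. Together these force the initial unit vectors of the $\beta_m$, seen from a fixed base point, to converge to that of $\beta$. Making this estimate uniform requires some care, because the metrics $\tilde g_2^{k(y_m)}$ change with $m$; we handle this by comparing them on the compact ball of radius $T$ through a bi-Lipschitz constant that tends to $1$.

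Continuity of the inverse follows by the symmetric argument applied to $\tilde H'$. Therefore $\partial\tilde H$ is an equivariant homeomorphism.
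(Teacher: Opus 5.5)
Your proposal is correct and follows essentially the paper's route: $\partial\tilde H$ is defined leafwise through Lemma~\ref{lemma:geodesics_go_to_quasi_geodesics}, equivariance comes from the isometric actions, leafwise continuity comes from Thurston's half-space argument (Lemma~\ref{lem:keylemma}), and transverse continuity comes from the continuous dependence of the leafwise metrics on $y$. You are a bit more careful than the paper in two places: you derive bijectivity explicitly from the homotopy inverse $\tilde H'$, and you compare $\tilde g_2^{k(y_m)}$ with $\tilde g_2^{k(y)}$ only on compact balls, using the uniform Morse constant $D$ to control everything beyond them, whereas the paper asserts that the identity between nearby leaves is a global quasi-isometry with constants close to $1$ and $0$, which would need the metrics to be uniformly close on whole leaves.
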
 

\begin{proof} As in the proof of Corollary 5.9.3 from Thurston's notes \cite{ThurstonNewBook}, the definition of $\partial \tilde H$ follows inmediately from the previous Lemma~\ref{lemma:geodesics_go_to_quasi_geodesics}. Indeed, for any geodesic $\alpha(t)$ in $\tilde M_1 \times \{y\}$, the map $\partial \tilde H$ sends the limit point $\big(\alpha(+\infty),y\big)$ to the point $\big(\beta(+\infty),k(y)\big)$ where $\beta$ is the unique geodesic in $\tilde M_2 \times \{k(y)\}$ which is at a bounded distance from the image of $\alpha$ by $\tilde H$. As $\Gamma_1$ and $\Gamma_2$ act isometrically on the spaces $\tilde M_1 \times F$ and $\tilde M_2 \times F$, endowed with the leafwise Riemann metrics $\tilde g_1$ and $\tilde g_2$, the boundary map becomes equivariant relatively to the (leafwise) conformal actions of $\Gamma_1$ and $\Gamma_2$ on theirs boundaries $\partial (\tilde M_1 \times F)$ and $\partial (\tilde M_2 \times F)$. Observe also that, due to Lemma~\ref{lemma:geodesics_go_to_quasi_geodesics}, the boundary map $\partial \tilde H$ defines a one-to-one correspondence between the boundaries at infinity. 
Finally, Thurston's argument for the continuity of $\partial \tilde H$ along the horizontal leaves $\tilde M_1 \times \{y\}$ is again based on Morse lemma (see \cite[Corollary 5.9.5]{ThurstonNewBook} and \cite[Proposition 2.10]{Lucker}). This argument follows from Thurston's Lemma~5.9.4, which is also used to prove the quasi-conformality of the boundary map and recalled below as Lemma \ref{lem:keylemma}.
As the leafwise Riemann metrics $\tilde g_1$ on $\tilde M_1 \times F$ and $\tilde g_2$ on $\tilde M_2 \times F$ depend continuously on $y \in F$, the same applies to the boundary map $\partial \tilde H$. Indeed, the identity from $\tilde M_1 \times \{y\}$ onto $\tilde M_1 \times \{y_0\}$ is a quasi-isometry with constants $C_1$ and $C_2$ close to $1$ and $0$. The same happens for the $\tilde M_2 \times \{k(y)\}$ onto $\tilde M_2 \times \{k(y_0)\}$. Combining these remarks with the continuity of the restriction of $\partial \tilde H$ from $\tilde M_1 \times \{y_0\}$ onto 
$\tilde M_2 \times \{k(y_0)\}$, proven by Thurston, we conclude that $\partial \tilde H$ is continuous, as we wanted. 
\end{proof}

The main result of this step is directly based on Corollary 5.9.6 from Thurston's notes \cite{ThurstonNewBook}. This is also the first main point of the original proof by Mostow \cite{Mostow}. Before stating and proving this result, we first need to recall the precise statement of Lemma~5.9.4 from Thurston's notes \cite{ThurstonNewBook} and define the notion of  leafwise quasi-conformal map.

\begin{lemma} \label{lem:keylemma}
 Let $f : \tilde M_1 \to \tilde M_2$ be a quasi-isometry between the universal coverings of two closed hyperbolic manifold $M_1$ and $M_2$. Let $\alpha$ be a geodesic in $\tilde M_1$ and $P$ some hyperplane orthogonal to $\alpha$. Let $\beta$ be the geodesic in $\tilde M_2$ which remains a bounded distance from $f(\alpha)$. Then there exists a constant $C$ depending only on the quasi-isometry constants of $f$ such that the projection of $f(P)$ onto $\beta$ has diameter $\leq C$.
\end{lemma}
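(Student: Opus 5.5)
The plan is the classical argument for Thurston's Lemma~5.9.4, carried out in $\H^n$ since $\tilde M_1$ and $\tilde M_2$ are isometric to hyperbolic space. Write $f$ for the quasi-isometry with constants $C_1$ (multiplicative) and $C_2$ (additive). By Lemma~\ref{lemma:geodesics_go_to_quasi_geodesics} (the Morse lemma), there is a constant $D$, depending only on $C_1,C_2$, such that $f(\alpha)$ lies in the $D$-neighbourhood of $\beta$. Let $x_0=\alpha\cap P$ and let $\pi_\beta$ denote the nearest-point projection onto $\beta$. It suffices to show that, for every $p\in P$, the distance from $\pi_\beta(f(p))$ to $\pi_\beta(f(x_0))$ is bounded by a constant $C/2$ depending only on $C_1,C_2$.

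Fix $p\in P$ and set $r=d(p,x_0)$. The first ingredient is a hyperbolic geometry estimate in $\H^n$. Because $P\perp\alpha$ at $x_0$, every point $q\in\alpha$ with $d(q,x_0)=s$ satisfies
$$\cosh d(p,q)=\cosh r\cosh s.$$
Hence $d(p,q)\geq r+s-\log 2$. In other words, the geodesic segment from $p$ to $q$ passes within a uniform distance $\delta_0$ (about $\log(1+\sqrt2)$) of $x_0$, so $x_0$ is a coarse projection of $p$ onto $\alpha$.

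Next I transport this to the image. Apply $f$ to the segment $[p,x_0]$ and to points of $\alpha$. Suppose $\pi_\beta(f(p))$ were at distance $L$ from $\pi_\beta(f(x_0))$, say on the side of $f(\alpha(+\infty))$. Pick $q=\alpha(s)$ on that side with $f(q)$ projecting to $\beta$ near $\pi_\beta(f(p))$. Then
$$d(f(p),f(q))\lesssim d(f(p),\beta)+O(D).$$
On the other hand, by the thin-triangle behaviour of projections in $\H^n$,
$$d(f(p),f(x_0))\gtrsim d(f(p),\beta)+L-O(1).$$
Pulling back through the quasi-isometry inequalities gives two bounds:
$$d(p,q)\le C_1\big(d(f(p),\beta)+O(D)\big)+C_1C_2,$$
$$d(p,x_0)\ge C_1^{-1}\big(d(f(p),\beta)+L\big)-O(1).$$
Combine these with $d(p,q)\ge r+s-\log2$ and $s\ge C_1^{-1}L-O(1)$, the latter because $f(q)$ and $f(x_0)$ are at distance about $L$ along $\beta$.

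The main obstacle is that these inequalities carry the factor $C_1$ multiplicatively, so a naive comparison does not directly yield a contradiction. The cleaner route, which I would follow, is Thurston's. First reduce to the case where $r$ is bounded. For large $r$, subdivide $[x_0,p]$ inside $P$ and use the fact that, for a point $p$ far out on $P$, the whole ball of radius comparable to $r$ around $p$ stays far from $\alpha$. Its image therefore stays far from $\beta$, because $f$ is coarsely surjective with a quasi-inverse. Since projection onto $\beta$ contracts exponentially at distance $d$ from $\beta$, a path of length $\ell$ at distance $\ge d$ projects to length at most $\ell e^{-d}$. Concretely, consider the image under $f$ of the geodesic segment from $x_0$ to $p$ in $P$. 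Its points at distance $t$ from $x_0$ lie at distance $\ge t-\delta_0$ from $\alpha$, so their images lie at distance $\ge C_1^{-1}t-C_2'$ from $\beta$. Break the image into pieces of length at most $C_1+C_2$ coming from unit subsegments. The projection of the piece at parameter $t$ then has length at most $(C_1+C_2)e^{-(C_1^{-1}t-C_2')}$ once $t$ is large, and at most $C_1+C_2+2D$ for small $t$. Summing this geometric series gives a bound depending only on $C_1,C_2$. I would therefore base the proof on this exponential-contraction summation, which turns the multiplicative-constant difficulty into a convergent series and yields $C$.
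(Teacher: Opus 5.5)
\textbf{The proposal is correct, by a different route from the standard one.} The paper gives no proof of this lemma; it only quotes Lemma~5.9.4 from Thurston's notes. Your final argument, the one introduced by ``Concretely'', is a correct, self-contained proof.

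The short argument usually given for this lemma applies the Morse lemma a second time, to the perpendicular segment $[x_0,p]\subset P$. Then $f([x_0,p])$ lies within $D$ of the geodesic $[f(x_0),f(p)]$. If $\pi_\beta f(x_0)$ and $\pi_\beta f(p)$ were far apart, this geodesic would pass within a universal distance of $\pi_\beta f(p)$. That gives $u\in[x_0,p]$ with $f(u)$ boundedly close to $\beta$. Hence $d(u,x_0)=d(u,\alpha)$ is bounded, so $f(u)$ is boundedly close to $f(x_0)$, which bounds $d(\pi_\beta f(x_0),\pi_\beta f(p))$ in one step.

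You use the Morse lemma only for $\alpha$, and then integrate. The image of the $k$-th unit subsegment of $[x_0,p]$ stays at distance at least $C_1^{-1}k-C_2'$ from $\beta$. Nearest-point projection onto $\beta$ contracts lengths by $1/\cosh d$ at distance $d$. So the projected increments are bounded by $\min\{C_1+C_2,\,Ke^{-k/C_1}\}$, a summable series whose sum depends only on $C_1,C_2$.

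Your route replaces the second Morse lemma, and the fellow-travelling fact for geodesics with far-apart projections, by an exact computation in Fermi coordinates plus a geometric series. The one-step route is shorter and uses only coarse thin-triangle geometry, so it transfers to Gromov-hyperbolic targets. Yours relies on the exact exponential contraction of the projection in $\mathbb{H}^n$. Both give the uniformity in $y\in F$ that Proposition~\ref{prop:quasi-conformal} needs.

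Some repairs are needed in the write-up; none affects the conclusion.
\begin{itemize}
\item The lower bound $d(f(z),\beta)\geq C_1^{-1}t-C_2'$ uses $\beta\subset N_{D'}(f(\alpha))$, the other half of the two-sided Hausdorff bound in the Morse lemma. Your opening paragraph states only $f(\alpha)\subset N_D(\beta)$, and ``coarse surjectivity'' is not by itself the reason.
\item The contraction factor at distance $d$ is $1/\cosh d\leq 2e^{-d}$, not $e^{-d}$.
\item For a merely coarse quasi-isometry, ``pieces of length at most $C_1+C_2$'' is meaningless. Apply the contraction estimate instead to the geodesic segments $[f(z_k),f(z_{k+1})]$. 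These have length at most $C_1+C_2$, so they stay at distance at least $C_1^{-1}k-C_2'-(C_1+C_2)/2$ from $\beta$. In the paper's setting $\tilde H$ is leafwise $C_1$-Lipschitz, so the issue disappears there.
\item The abandoned first attempt and the sentence about first reducing to bounded $r$ play no role in the final argument and should be removed.
\end{itemize}
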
 

\begin{definition} \label{def:quasi-conformal} A foliated homeomorphism $h$ from a foliated space $\X$ endowed with a leafwise distance $d$ to itself (with a eventually different leafwise distance also denoted $d$) is \emph{leafwise quasi-conformal} if for all point $z \in \X$, the limit 
$$
\lim_{r \to 0} \frac{\sup_{z^\prime,z^{\prime\prime} \in S_r(z)} d\big(h(z^\prime),h(z^{\prime\prime})\big)}{\inf_{z^\prime,z^{\prime\prime} \in S_r(z)} d\big(h(z^\prime),h(z^{\prime\prime})\big)} < +\infty,
$$
where $S_r(z)$ is the sphere of radius $r$ around $z$ contained in the leaf passing through $z$, and $z^\prime$ and $z^{\prime\prime}$ are diametrically opposite. If the deviation from conformality admits the same upper bound for all leaves, we will say $h$ is \emph{uniformly quasi-conformal}.
\end{definition}

\begin{proposition} \label{prop:quasi-conformal}
The boundary map 
$$
\partial \tilde H : \partial (\tilde M_1 \times F) \to 
\partial (\tilde M_2 \times F)
$$
is equivariantly uniformly quasi-conformal. 
\end{proposition}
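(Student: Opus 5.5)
The plan is to follow Thurston's proof of Corollary 5.9.6 in \cite{ThurstonNewBook} leaf by leaf, and to check that every constant depends only on the uniform quasi-isometry constants $C_1, C_2$ of Proposition~\ref{prop:quasi-isometry}. Those constants are the same for all leaves. Equivariance is already known from Proposition~\ref{prop:boundarymap}: $\partial \tilde H$ is induced by the $\Gamma_1$-$\Gamma_2$-equivariant map $\tilde H$. So the real content is the uniform bound on the distortion in Definition~\ref{def:quasi-conformal}.

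Fix $y \in F$ and restrict $\tilde H$ to the leaf $\tilde M_1 \times \{y\} \to \tilde M_2 \times \{k(y)\}$. This restriction is a $(C_1,C_2)$-quasi-isometry between two hyperbolic spaces. To measure distortion at a point $\xi \in \partial^y \tilde M_1$, I will use the upper half-space model adapted to a geodesic $\alpha$. Place $\xi$ at $0$ and the other endpoint of $\alpha$ at $\infty$, so that $\alpha$ is the vertical axis. Small spheres $S_r(\xi)$ in the boundary are then exactly the boundaries at infinity of hyperplanes $P_r$ orthogonal to $\alpha$. On the target side, normalize the geodesic $\beta$ at bounded distance from $\tilde H(\alpha)$ (given by Lemma~\ref{lemma:geodesics_go_to_quasi_geodesics}) so that $\partial\tilde H(\xi)=0$ and its other endpoint is $\infty$.

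Lemma~\ref{lem:keylemma} says that the projection of $\tilde H(P_r)$ onto $\beta$ has diameter at most a constant $C$ depending only on $C_1, C_2$. Hence $\partial\tilde H(S_r(\xi))$, which is the boundary of $\tilde H(P_r)$ in the sense of limit points, lies in the region of $\partial\H^n$ projecting to a segment of $\beta$ of length $\le C$. In the normalized model this region is an annulus $\{ s \le |z| \le e^{C} s\}$. Diametrically opposite points $z', z''$ therefore have images whose distance ratio is bounded by roughly $e^{C}$ (up to a factor of $2$ from the triangle inequality). This gives the limsup bound in Definition~\ref{def:quasi-conformal}, uniformly in $\xi$ and $r$.

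The metrics $d$ on the boundary spheres are spherical visual metrics seen from chosen basepoints. The Möbius normalizations above change distances only by a bounded conformal factor near $\xi$, so the ratio bound transfers to $d$ with a multiplicative constant.

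The main obstacle is uniformity across leaves. Lemma~\ref{lem:keylemma} is stated for universal covers of closed hyperbolic manifolds. However, its proof uses only that the two spaces are isometric to $\H^n$ and that the map is a quasi-isometry. Each leaf with $\tilde g_1^y$ (hyperbolic by assumption) is isometric to $\H^n$, so I will apply the lemma leafwise with $C = C(C_1,C_2)$. Since $C_1, C_2$ are independent of $y$, the bound is uniform. I would also note that the boundary distances are chosen via the canonical conformal structures of Definition~\ref{def:boundary}, so conformality is independent of the leafwise basepoint.
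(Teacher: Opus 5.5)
Your proposal follows the paper's proof essentially step for step. You restrict to a single leaf and normalize so that $\xi$ and its image sit at $0$, with $\alpha$ and $\beta$ the vertical axis. You then apply Lemma~\ref{lem:keylemma}, with a constant depending only on the uniform quasi-isometry constants, to trap the image of $S_r(\xi)$ between two hyperplanes orthogonal to $\beta$ at bounded distance $\log(R_2/R_1)$. Equivariance comes from Proposition~\ref{prop:boundarymap}.

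One point that both you and the paper pass over quickly concerns the diametrically-opposite formulation of Definition~\ref{def:quasi-conformal}. The triangle inequality only bounds the numerator, by $2e^{C}s$. The denominator needs the observation that the geodesic from $z'$ to $z''=-z'$ passes through $\alpha\cap P_r$. By the Morse lemma, its image geodesic then passes within uniformly bounded distance of a point of $\beta$ at height in $[s,e^{C}s]$. This forces $|\partial\tilde H(z')-\partial\tilde H(z'')|\geq c\,s$, with $c$ depending only on $C_1,C_2$.
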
 

\begin{proof} As announced above, Corollary 5.9.6 from Thurston's notes \cite{ThurstonNewBook} show that, for all $y \in F$, the restriction of the leafwise map $\partial \tilde H$ to 
$\partial^y \tilde M_1$ is a quasi-conformal map from 
$\partial^y \tilde M_1$ onto $\partial^{k(y)} \tilde M_2$. However, we need to see that the deviation from conformality admits an upper bound that does not depend on $y \in F$. To do so, we will simply denote $f$ the map induced by $\tilde H$ from $M_1 \times \{y\}$ onto $M_2 \times \{k(y)\}$. As usual, we can assume that $M_1 \times \{y\}$ is the upper half-space $\H^n$ equipped with the Poincaré metric $\tilde g_1^y$. Then $M_2 \times \{k(y)\}$ is also diffeomorphic to $\H^n$, although it is equipped with a possibly different metric $\tilde g_2^{k(y)}$. We can also assume that $z$ and $f(z)$ correspond to the origin $0$. Now we consider any hyperplane $P$ that encircles a domain $D$ around $0$ and is orthogonal to the geodesic $\alpha$ from $0$ to the point at infinity. By Lemma~\ref{lem:keylemma}, the diameter of the projection of $f(P)$ onto $\beta$ (which can be assumed equal to $\alpha$) is bounded by a constant $C$ which does not depend on $y \in F$. There exists two hyperplanes $P_1$ and $P_2$ perpendicular to $\alpha$ such that 
$D_1 \subset f(D) \subset D_2$ and whose distance along $\alpha$ is uniformly bounded for all hyperplanes $P$ and for all transverse coordinates $y \in F$. In fact, this distance is equal to 
$$
log \; r = log \; \frac{R_2}{R_1}
$$
where $R_1$ and $R_2$ are the radii of the boundaries of $P_1$ and $P_2$ into $\partial \H^n$ (see Figure~\ref{fig:conformal}). Therefore the ratio between the maximal and minimal distances is uniformly bounded by a constant $K$ that does not depend on $y \in F$ and the boundary map $\partial f$ is uniformly quasi-conformal. Thus there is a constant $K \geq 1$ (depending on the uniform quasi-isometry constants) such that 
$$
\lim_{r \to 0} \frac{\sup_{(x^\prime,y),(x^{\prime\prime},y) \in S_r(x,y)} d\big(h(x^\prime,y),h(x^{\prime\prime},y)\big)}{\inf_{(x^\prime,y),(x^{\prime\prime},y) \in S_r(x,y)} d\big(h(x^\prime,y),h(x^{\prime\prime},y)\big)} \leq K,
$$
for all $y \in F$ (where we maintain the conventions of  Definition~\ref{def:quasi-conformal}). Finally, from Proposition~\ref{prop:boundarymap}, we know that $\partial \tilde H$ is equivariant with respect to the (leafwise) conformal actions of $\Gamma_1$ and $\Gamma_2$ on $\partial (\tilde M_1 \times F)$ and $\partial (\tilde M_2 \times F)$ respectively. 
\end{proof}

\begin{figure}[b]
  \begin{tikzpicture}[scale=1]
   \clip(-4.4,-3.2) rectangle (4.4,2.2);
    \node at (0,0) {\includegraphics[width=0.8\textwidth]{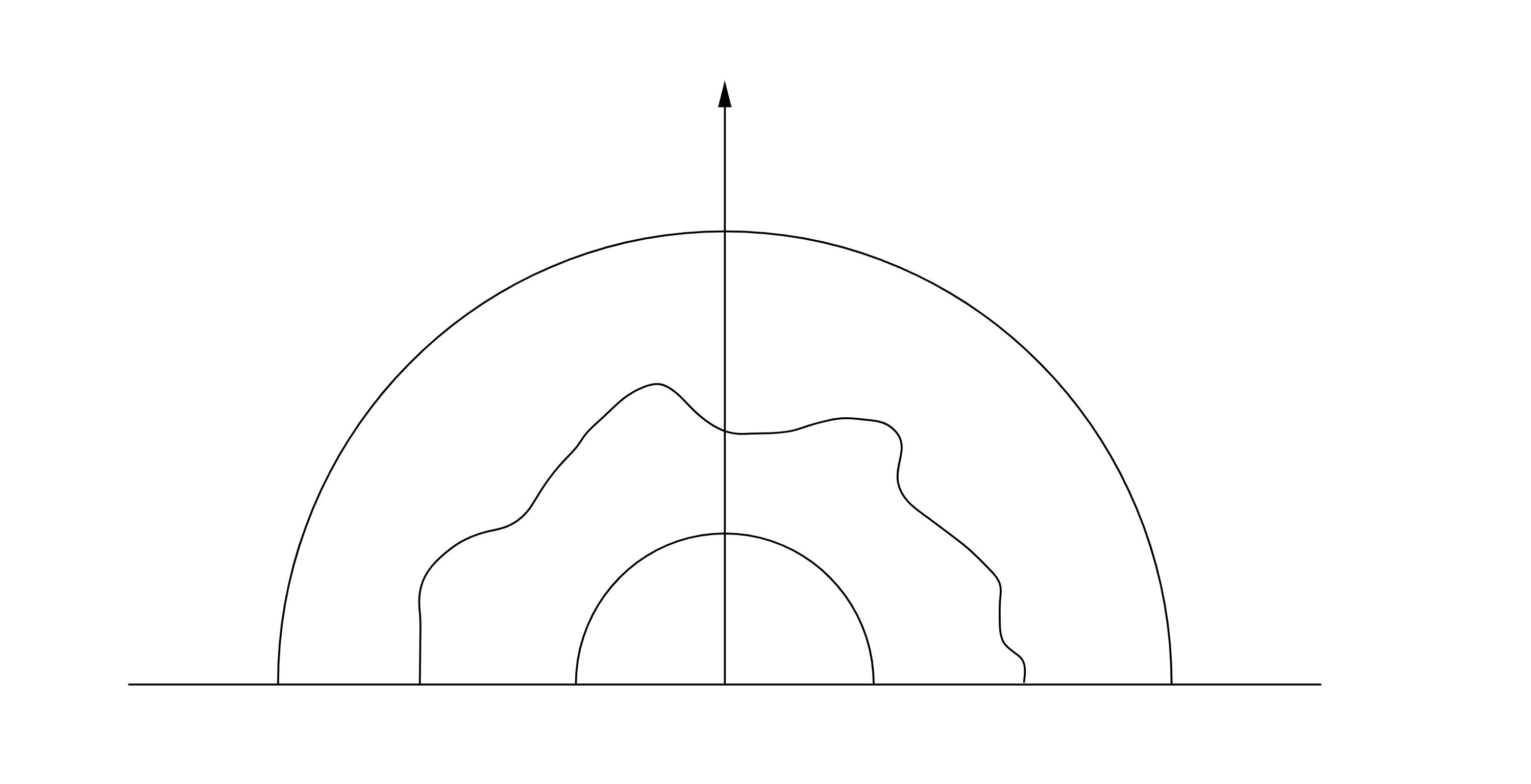}};
    \node at (0.3,1.4) {\scriptsize $\alpha=\beta$}; 
    \node at (1.4,-0.5) {\scriptsize $f(P)$}; 
    \node at (2.2,.2) {\scriptsize $P_2$}; 
    \node at (0.3,-1.5) {\scriptsize $P_1$}; 
    \node at (4.1,-1.9) {\scriptsize $\partial \H^n$}; 
\draw [decorate,decoration={brace,amplitude=3pt,mirror,raise=1ex}]
  (-0.25,-1.9) -- (0.7,-1.9) node[midway,yshift=-3ex]{\tiny  $R_1$};
\draw [decorate,decoration={brace,amplitude=3pt,mirror,raise=1ex}]
  (-0.25,-2.45) -- (2.65,-2.44) node[midway,yshift=-3ex]{\tiny  $R_2$};
  \end{tikzpicture}

  \caption{Bounding the deviation from conformality}
  \label{fig:conformal}
\end{figure}

\begin{remark} \label{rem:tubes}
The three first steps of the proof of Theorem~\ref{thm:Mostowsuspension} can be modified by appealing to uniformisation maps $\varphi_1 : \H^n \times F \to M_{\rho_1}$ and 
$\varphi_2 : \H^n \times F \to M_{\rho_2}$ of the foliated spaces $M_{\rho_1}$
and $M_{\rho_1}$. Now both products $\H^n \times F$ are endowed with the same leafwise Riemannian metric obtained from the Poincaré metric on the first factor $\H^n$. The maps $\varphi_1$ and $\varphi_2$ are leafwise smooth and locally isometric continuous maps onto $M_{\rho_1}$ and $M_{\rho_2}$. In other terms,  $\varphi_1$ and $\varphi_2$ are \emph{normal tubes} (in the sense of \cite{ADMV2}) that cover the whole foliated spaces. 

The first step is specific to suspensions, that is, fibre bundles with transverse lamination. As in the second step, the homotopy equivalence $h$ lifts to an equi\-variant quasi-isometry 
$$\tilde H : \H^n \times F \to \tilde H^n \times F$$ 
with uniform constants. However, this map does not necessarily preserve the first projection $p_1 : \H^n \times F \to \H^n$ (and then it does not necessarily induce a fibre bundle morphism between $M_{\rho_1}$ and $M_{\rho_2}$). 
An advantage of this approach lies in the definition of the boundary at infinity, which can be canonically identified to the product $S^{n-1} \times F$. Once again, as a consequence of Morse lemma, the boundary map 
$$
\partial \tilde H : S^{n-1} \times F \to S^{n-1} \times F
$$
is globally continuous. The main advantage of this approach is that it simplifies Definition~\ref{def:quasi-conformal} and the proof of Proposition~\ref{prop:quasi-conformal} in the same way as can be seen in Thurston's proof, namely, we can use the same distance on $\partial (\tilde M_1 \times F)$ and $\partial (\tilde M_2 \times F)$. 

In summary, our first approach uses compatibility with the fibre bundle structure, but this compatibility is not necessary to obtain a quasi-conformal boundary map. So both approaches are interchangeable. 
Here we will continue to use the first one, although the second one, based on normal tubes, will be convenient for extending Theorem~\ref{thm:Mostowsuspension} to skew solenoidal manifolds in Section~\ref{Snonhomogeneous}. 
\end{remark}

\subsection*{The boundary map is conformal} This step essentially follows the last step in Thurston's proof of rigidity theorem. See also the original proof by Mostow \cite{Mostow}. We recall the main points: 
\smallskip 

\noindent
(i) the boundary map is leafwise differentiable a.e.
\smallskip 

\noindent
(ii) the excentricity function (which measures the deviation from a leafwise conformal map) is uniformly bounded a.e. 
\smallskip 

\noindent
(iii) the excentricity function is constant a.e. 
\smallskip 

\noindent
(iv) finally, the boundary map  is conformal. 
\medskip 

\noindent
(i) First, according to Rademacher-Stepanov theorem (see \cite{Vaisala}), quasi-conformal maps have the following strong regularity property: \emph{any $K$-quasi-conformal map  $\partial \tilde H : \partial^y \tilde M_1 \to \partial^{k(y)} \tilde M_2$ is absolutely continuous with respect to the Lebesgue measure and it is differentiable almost everywhere with $K$-quasi-conformal differential.} In our foliated context, we have assumed that the action of $\rho_1(\Gamma_1)$ preserves an ergodic probability measure $\mu_1$ on $F$ with full support (and therefore the action of $\rho_2(\Gamma_2)$ also preserves its image $\mu_2 = k_\ast \mu_1$ on $F$ also with full support). Thus we can formulate the following leafwise version of this regularity property:

\begin{proposition} \label{prop:Rademacher}
The boundary map 
$$
\partial \tilde H : \partial (\tilde M_1 \times F) \to 
\partial (\tilde M_2 \times F)
$$
is leafwise differentiable a.e. with respect to the measure $\nu_1$ on $\partial (\tilde M_1 \times F)$ obtained by integrating the Lebesgue measure $\lambda$ on the universal sphere against an ergodic invariant measure $\mu_1$ on $F$ with full support \hfill \qed
\end{proposition}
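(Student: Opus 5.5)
We will reduce the statement to the classical Rademacher--Stepanov theorem applied fibrewise, combined with Fubini's theorem for the product measure $\nu_1 = \lambda \otimes \mu_1$. By Definition~\ref{def:boundary}, the boundary $\partial (\tilde M_1 \times F)$ is leafwise diffeomorphic to the product $S^{n-1}\times F$, so the first task is to fix such a trivialisation. We take the one given by unit tangent spheres at a fixed base point $x_0\in\tilde M_1$, or alternatively the canonical identification $S^{n-1}\times F$ of Remark~\ref{rem:tubes}. In this trivialisation each fibre $\partial^y \tilde M_1$ carries the Lebesgue class $\lambda$ of the universal sphere, and $\nu_1$ is the measure $\int_F \lambda \, d\mu_1(y)$.

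The second step is to check measurability of the exceptional set
$$
N = \{ (\xi,y) : \partial \tilde H \text{ is not differentiable along } \partial^y\tilde M_1 \text{ at } \xi \}.
$$
Proposition~\ref{prop:boundarymap} gives joint continuity of $\partial \tilde H$. Leafwise differentiability at $(\xi,y)$ can then be written by the usual difference-quotient criterion over countably many rational directions and radii $1/m$. This expresses $N$ as a Borel set, namely a countable union and intersection of closed and open conditions. We expect this to be the only genuinely delicate point. If it fails, we will replace Borel measurability by $\nu_1$-measurability via the Suslin property of $S^{n-1}\times F$, which is standard for compact metrisable $F$ such as the Cantor set; alternatively, it suffices to exhibit a Borel null set containing $N$.

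The third step is the fibrewise statement. By Proposition~\ref{prop:quasi-conformal}, for each $y\in F$ the restriction $\partial \tilde H : \partial^y\tilde M_1 \to \partial^{k(y)}\tilde M_2$ is $K$-quasi-conformal with $K$ independent of $y$. The Rademacher--Stepanov theorem, as recalled above from \cite{Vaisala}, then shows that this restriction is differentiable $\lambda$-a.e. with $K$-quasi-conformal differential. Hence each slice $N\cap \partial^y\tilde M_1$ is $\lambda$-null, and Fubini--Tonelli gives
$$
\nu_1(N)=\int_F \lambda(N_y)\,d\mu_1(y)=0.
$$
Finally, we will note that the conclusion does not depend on the trivialisation chosen. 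Changing the base point, or using the $\Gamma_1$-action, transforms each fibre by a conformal diffeomorphism, which preserves the Lebesgue measure class. Moreover, $\mu_1$ is $\rho_1(\Gamma_1)$-invariant, so the class of $\nu_1$ is $\Gamma_1$-invariant and the null set $N$ is well defined and equivariant. Ergodicity and full support of $\mu_1$ are not needed here; they become relevant only in the subsequent steps (ii)--(iv).
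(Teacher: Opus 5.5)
Your proof is correct and takes the same route as the paper, which gives the proposition no separate proof: apply the Rademacher--Stepanov theorem on each fibre to the restrictions $\partial^y\tilde M_1 \to \partial^{k(y)}\tilde M_2$, which are uniformly $K$-quasi-conformal by Proposition~\ref{prop:quasi-conformal}, and then integrate against $\mu_1$. Your Borel measurability check of the exceptional set and the Fubini computation $\nu_1(N)=\int_F \lambda(N_y)\,d\mu_1(y)=0$ make explicit what the paper leaves implicit, and you are right that ergodicity and full support of $\mu_1$ play no role at this step.
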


Observe that, according to the proof of Proposition~\ref{prop:quasi-conformal}, the boundary map $\partial \tilde H$ is actually leafwise differentiable in restriction to a subset of full measure that intersects $\partial^y\tilde M_1$ in a set of full Lebesgue measure for all $y\in F$.
\medskip

\noindent
(ii) For each point $y \in F$ and for each point $z = (x,y) \in \partial^y \tilde M_1$ such that the map $ \partial \tilde H$ is differentiable at $z$, the differential map $(\partial \tilde H)_{\ast z}$ sends a sphere around the origin to an ellipsoid. The ratio of the lengths of the major and minor axes of this ellipsoid is the \emph{eccentrity} $e(z)$ of the map $\partial \tilde H$ at $z$. As a consequence of Proposition~\ref{prop:quasi-conformal}, we deduce that 
$$
e : \partial (\tilde M_1 \times F) \to \R
$$
is a measurable function which is invariant by the (leafwise) conformal action of $\Gamma_1$ on $\partial (\tilde M_1 \times F)$. If $\partial \tilde H$ is (leafwise) $K$-quasi-conformal, then $e(z) \leq K$ 
for almost every $z\in \partial^y\tilde M_1$ and every $y \in F$.
\medskip 

\noindent
(iii) As next point, we will prove that $e$ is constant a.e. (and later, in the last point, that $K=1$) from the following leafwise version of a classical theorem by E. Hopf \cite{Hopf}: 

\begin{theorem} \label{thm:Hopftheorem}
Let $M_1$ be a connected closed hyperbolic manifold of dimension $n \geq 2$ with fundamental group $\Gamma_1$. 
Let $M_{\rho_1}$ be the suspension of a representation $\rho_1 : \Gamma_1 \to G$ into a homeomorphism group $G$ acting on a compact space $F$. If $\rho_1(\Gamma_1)$ preserves an ergodic probability measure on $F$ with full support, then the foliated geodesic flow $g_t$ on the unit tangent bundle $M_\rho^1$ is ergodic. 
\end{theorem}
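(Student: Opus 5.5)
We prove ergodicity of $g_t$ on $M^1_{\rho_1} = \Gamma_1 \bs \big( \isomH/\SO{n-1} \times F \big)$ with respect to the invariant measure $m$ locally given by the product of the Liouville measure on the leaves and $\mu_1$ on $F$. The approach is to reduce the statement to a \emph{double ergodicity} property of the $\Gamma_1$-action on pairs of boundary points times $F$. This property is then obtained by combining the classical Hopf argument with the ergodicity of $\mu_1$.

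First we identify the objects on the universal level. Lift a $g_t$-invariant measurable function $f$ on $M^1_{\rho_1}$ to a $\Gamma_1$-invariant function $\tilde f$ on $T^1\H^n \times F$, where $\Gamma_1$ acts diagonally via $\rho_1$. Use the Hopf parametrisation $T^1\H^n \cong (S^{n-1}\times S^{n-1}\setminus \Delta)\times \R$, which is equivariant for $\isomH$ and in which $g_t$ acts by translation on the $\R$ factor. Flow invariance means $\tilde f(\xi,\eta,s,y)=\phi(\xi,\eta,y)$ for a measurable $\Gamma_1$-invariant function $\phi$ on $(S^{n-1}\times S^{n-1}\setminus\Delta)\times F$, defined $\lambda\otimes\lambda\otimes\mu_1$-a.e.

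Next we run the Hopf argument leafwise. The key point is that the transverse coordinate $y$ is \emph{constant} along leaves, and hence along stable and unstable manifolds of $g_t$, which are contained in leaves. Because $M^1_{\rho_1}$ is compact, $f$ can be approximated in $L^1(m)$ by continuous functions. For continuous $f$, the Birkhoff averages $f^\pm$ exist $m$-a.e. by the Birkhoff ergodic theorem, since $g_t$ preserves $m$: it preserves Liouville on each leaf and $\mu_1$ is $\rho_1(\Gamma_1)$-invariant. Uniform continuity, with leafwise distances controlled uniformly by compactness, makes $f^+$ constant along strong stable leaves and $f^-$ constant along strong unstable leaves. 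We also have $f^+=f^-$ a.e. On the universal level this says that $f^+$ depends only on $(\xi,y)$ and $f^-$ only on $(\eta,y)$. Applying Fubini in the coordinates $(\xi,\eta)$ for $\mu_1$-a.e.\ fixed $y$, we conclude that $f^+=\psi(y)$ is a function of $y$ alone, measurable and defined $\mu_1$-a.e. The absolute continuity needed for Fubini is automatic here, because the stable and unstable foliations of $\H^n$ are smooth in the Hopf coordinates.

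Finally we use the invariance. The function $\psi$ is invariant under $\rho_1(\Gamma_1)$, because $\tilde f$ is $\Gamma_1$-invariant and the action on the $(\xi,\eta)$-factor plays no role once the dependence on those coordinates has disappeared. Ergodicity of $\mu_1$ then forces $\psi$ to be constant $\mu_1$-a.e. So every continuous $f$ has a.e.\ constant time averages, and by the standard density argument $g_t$ is ergodic. Full support of $\mu_1$ is not needed for the a.e.\ statement. It matters only to make $m$ of full support, so that the conclusion is meaningful for the lamination, and it is used later in the proof.

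The main obstacle is the Fubini step in the presence of the transverse factor. We must verify that the sets where $f^+$ and $f^-$ are defined and equal have full $\lambda\otimes\lambda$ measure in almost every fibre $\{y\}$. This follows from Fubini applied to $m=\int(\text{Liouville})\,d\mu_1$. We must also check that the measurable function produced is genuinely $\Gamma_1$-invariant rather than merely invariant up to null sets depending on $\gamma$. Countability of $\Gamma_1$ resolves the latter point.
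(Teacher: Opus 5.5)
Your proof is correct, but it takes a different route from the paper's.

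\textbf{The paper's route.} The paper argues from the homogeneous structure. By Furstenberg's (Mackey-type) correspondence, ergodicity of $\mu_1$ under $\rho_1(\Gamma_1)$ is equivalent to ergodicity of the right $\isomH$-action on $\M_{\rho_1}=\Gamma_1\bs(\isomH\times F)$. Moore's ergodicity theorem then passes this ergodicity to the non-compact Cartan subgroup $A$. Since $A$ commutes with $\SO{n-1}$, it descends to $M^1_{\rho_1}=\M_{\rho_1}/\SO{n-1}$. Only afterwards does the paper translate this into ergodicity of $\Gamma_1$ on $\partial^2(\tilde M_1\times F)$, which is exactly the double-ergodicity statement you prove directly.

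\textbf{Your route.} You run Hopf's argument inside the leaves, using that stable and unstable manifolds lie in leaves and that $y$ is constant along them. This leaves a residual function $\psi(y)$, which ergodicity of $\mu_1$ then kills. Fubini is harmless because Liouville measure is equivalent to $\lambda\otimes\lambda\otimes ds$ in Hopf coordinates.

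\textbf{What each buys.} Your argument avoids representation theory and makes explicit where transverse ergodicity enters. It uses only the Hopf mechanism: contraction along strong stable leaves plus absolute continuity of the stable and unstable foliations. It would therefore adapt, with the usual Anosov-type absolute continuity input, to leafwise metrics that are not locally homogeneous. The paper's argument is much shorter and relies on the pull-back homogeneous structure. Its real payoff is that Moore's theorem gives ergodicity of every closed non-compact subgroup at once. The paper reuses exactly this for $\SO{n-2}AU$ in the proof of Proposition~\ref{prop:conformal}, which your Hopf argument would not supply without further work.

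One small slip: in your last step, the $\Gamma_1$-invariant function you need is the lift of $f^+$, not $\tilde f$. Since $f^+$ is defined on the quotient, its lift is indeed $\Gamma_1$-invariant, so nothing breaks.
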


\begin{proof}
By Furstenberg's equivalence \cite{Furstenberg2}, as $\isomH$ is unimodular, the holonomy group $\rho_1(\Gamma_1)$ admits an ergodic invariant measure $\mu_1$ on $F$ if and only if the right action of $\isomH$ on $\M_{\rho_1} = \Gamma_1 \bs \big( \isomH \times F \big)$ admits an ergodic invariant measure. By Moore's Ergodicity Theorem \cite[Theorem 2.2.15]{Zimmer}, the right action of the Cartan subgroup $A < \isomH$ on $\M_{\rho_1}$ is still ergodic. This implies that the right action of $A$ on $M^1_{\rho_1} =  \Gamma_1 \bs \big( \isomH/\SO{n-1} \times F \big)$ is ergodic. In other words, the foliated geodesic flow $g_t$ on $T^1 \L_{\rho_1}$ is ergodic.
\end{proof}

Recall that, in Hopf coordinates, the unit tangent bundle $T^1\H^n$ of hyperbolic space becomes 
$$(S^{n-1}_\infty\times S^{n-1}_\infty-\Delta)\times \R$$ 
where $\Delta$ is the diagonal in the product of spheres. The Liouville measure is the product of the measure $dt$ on the factor $\R$ and a measure on $S^{n-1}_\infty\times S^{n-1}_\infty-\Delta$ that is finite on compact sets and in the Lebesgue measure class.

Using again Hopf coordinates in the leafwise direction, we see that the quotient of the product space 
$$\isomH/\SO{n-1} \times F$$ 
by the right action of $A$ identifies to the space 
$$
\partial^2 (\tilde M_1 \times F) = \bigcup_{y \in F} \partial^y \tilde M_1 \times \partial^y \tilde M_1 - \Delta
$$
that fibres over $F$ with fibre $S^{n-1}_\infty \times S^{n-1}_\infty - \Delta$. By Furstenberg's equivalence, the ergodicity of the right action of $A$ on $M^1_{\rho_1}$ is equivalent to the ergodicity of the left action of $\Gamma_1$ on $\partial^2 (\tilde M_1 \times F)$ endowed with the product of a measure that is absolutely continuous with respect to Lebesgue on $S^{n-1}_\infty \times S^{n-1}_\infty - \Delta$ and an ergodic invariant probability measure $\mu_1$ on $F$ with full support. Up to a constant, this measure is induced by the  Liouville measure on the unit tangent bundle $M^1_\rho$.

Now we deduce that every $\Gamma_1$-invariant Borel subset of $\partial (\tilde M_1 \times F)$ has zero measure or full measure.
We do not need the uniqueness of $\mu_1$, although we know that it is true when $G$ acts isometrically on $F$ (including the case where $F=G$). Now, as $e$ is invariant by the action of $\Gamma_1$, we can state the following result (see \cite[Corollary 5.9.9]{ThurstonNewBook}:

\begin{corollary} \label{cor:conformal} The eccentricity map
$$
e : \partial (\tilde M_1 \times F) \to \R
$$ is constant a.e. \hfill \qed 
\end{corollary}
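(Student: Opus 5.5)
The plan is to deduce Corollary~\ref{cor:conformal} from the ergodicity of the left action of $\Gamma_1$ on $\partial(\tilde M_1\times F)$ with respect to $\nu_1$, which follows from Theorem~\ref{thm:Hopftheorem} and the Hopf-coordinate discussion. It has three steps: show that $e$ is measurable and $\Gamma_1$-invariant, pass from ergodicity on the double boundary $\partial^2(\tilde M_1\times F)$ to ergodicity on $\partial(\tilde M_1\times F)$, and then apply the standard fact that an invariant measurable function for an ergodic action is constant a.e.

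\textbf{Invariance and measurability.} By Proposition~\ref{prop:Rademacher}, $\partial\tilde H$ is leafwise differentiable on a $\nu_1$-conull set $D$. Since $\Gamma_1$ and $\Gamma_2$ act by leafwise Möbius (conformal) maps and $\partial\tilde H$ is equivariant, $D$ can be taken $\Gamma_1$-invariant. For $z\in D$ the chain rule gives
$$(\partial\tilde H)_{\ast \gamma z}\circ \gamma_{\ast z} = \bar h_\ast(\gamma)_{\ast\,\partial\tilde H(z)}\circ(\partial\tilde H)_{\ast z}.$$
Both $\gamma_{\ast z}$ and $\bar h_\ast(\gamma)_\ast$ are conformal linear maps and do not change the ratio of ellipsoid axes, so $e(\gamma z)=e(z)$. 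Measurability holds because $e$ is a limit of difference quotients of the continuous map $\partial\tilde H$, taken over a countable family of radii and directions. Moreover $1\le e\le K$ a.e. by Proposition~\ref{prop:quasi-conformal}, so $e$ is bounded.

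\textbf{Ergodicity on the boundary.} Let $B\subset\partial(\tilde M_1\times F)$ be a $\Gamma_1$-invariant Borel set. Consider
$$\tilde B=\{(\xi,\eta,y)\in\partial^2(\tilde M_1\times F): (\xi,y)\in B\},$$
which is $\Gamma_1$-invariant for the diagonal action. By the Furstenberg correspondence recalled after Theorem~\ref{thm:Hopftheorem}, $\tilde B$ is null or conull for the product measure. That measure is equivalent to $(\lambda\otimes\lambda)\otimes\mu_1$ restricted off the diagonal, and $\Delta$ is null. Fubini then shows that $B$ is $\nu_1$-null or $\nu_1$-conull.

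\textbf{Conclusion.} Apply this to the sets $\{e<c\}$ for rational $c$. Each is null or conull, so $e$ equals $\inf\{c:\{e<c\}\text{ conull}\}$ almost everywhere.

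The main obstacle is the ergodicity step. It requires the measure on $\partial^2$ induced by the Liouville measure on $M^1_{\rho_1}$ to really lie in the class of $(\lambda\otimes\lambda)\otimes\mu_1$. This comes from the product structure of the suspension together with the fact that $\mu_1$ is invariant rather than merely quasi-invariant; I will verify it leafwise in Hopf coordinates, where the Liouville density is $|\xi-\eta|^{-2(n-1)}\,d\xi\,d\eta\,dt$.
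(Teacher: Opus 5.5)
Your proposal is correct and takes essentially the same route as the paper. First, $e$ is measurable and $\Gamma_1$-invariant because $\partial \tilde H$ is equivariant for leafwise conformal actions. Second, the leafwise Hopf theorem together with Furstenberg's correspondence gives ergodicity of $\Gamma_1$ on $\partial^2(\tilde M_1 \times F)$, and hence on $\partial(\tilde M_1 \times F)$, so an invariant measurable function is constant a.e. Your chain-rule computation and the explicit Fubini argument passing from $\partial^2$ to $\partial$ simply make precise steps that the paper states without detail.
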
 

\noindent
(iv) To conclude, we will have to prove that $e$ is equal to $1$ a.e. In his proof of the classical rigidity theorem, Thurston uses the ergodicity of the action of $\Gamma_1$ on $\partial^2 \tilde M_1$ to assert that there is no invariant measurable line field on $S^{n-1}$ when $n=3$. Considering frame fields instead of line fields, L\"ucker gives in \cite{Lucker} a complete proof of Thurston's claim. Other authors appeal to the results of D. Sullivan \cite{Sullivan1981} and P. Tukia \cite{TukiaQuasiconformal} about $\Gamma_1$-invariant measurable fields of ellipsoids. See for example \cite[Theorem 6.2]{BourdonMostow}. Here we will slightly modify Thurston's proof appealing to an idea from \cite{Bourdon} that again uses Moore's Ergodicity Theorem: 

\begin{proposition} \label{prop:conformal} 
The boundary map 
$$
\partial \tilde H : \partial (\tilde M_1 \times F) \to 
\partial (\tilde M_2 \times F)
$$
is a leafwise conformal diffeomorphism.
\end{proposition}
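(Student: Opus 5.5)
We argue by contradiction with Corollary~\ref{cor:conformal}. Suppose the a.e.\ constant value $e_0$ of the eccentricity is $>1$. At each point $z=(\xi,y)$ where $\partial\tilde H$ is leafwise differentiable (Proposition~\ref{prop:Rademacher}), the differential $(\partial\tilde H)_{\ast z}$ is a linear map $T_\xi\partial^y\tilde M_1\to T\partial^{k(y)}\tilde M_2$. Pulling back the round conformal structure of the target gives an ellipsoid at $z$, well defined up to scaling, with eccentricity $e_0$. Its major axes span a proper nonzero subspace $V(z)\subset T_\xi\partial^y\tilde M_1$ of some dimension $1\le m\le n-2$. After discarding a null set we may take $m$ constant, since $m(z)$ is $\Gamma_1$-invariant and the action is ergodic. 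This gives a measurable, $\Gamma_1$-equivariant field $z\mapsto V(z)$ of $m$-planes on $\partial(\tilde M_1\times F)$. Equivariance holds because $\Gamma_1$ and $\Gamma_2$ act leafwise conformally and $\partial\tilde H$ is equivariant (Proposition~\ref{prop:boundarymap}).

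The next step transports this field to the foliated frame bundle, following the idea from \cite{Bourdon}. A point $u\in\isomH$, together with $y\in F$, determines a frame at the endpoint $u(+\infty)$ of the corresponding geodesic. Using it, $V$ defines a measurable function
$$\Phi:\isomH\times F\to \mathrm{Gr}_m(\R^{n-1}),\qquad \Phi(u,y)=u^{-1}_\ast V\big(u(+\infty),y\big).$$
This function is left $\Gamma_1$-invariant for the diagonal action, so it descends to $\M_{\rho_1}=\Gamma_1\bs(\isomH\times F)$. Under the right action it is invariant by the stabiliser $P=MAN$ of the point $+\infty$, where $M=\SO{n-1}$, up to the action of $M$ on the Grassmannian. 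The reason is that $A$ and $N$ act conformally at $+\infty$ with differential a homothety, hence preserve the plane, while $M$ rotates it. Composing with the quotient map $\mathrm{Gr}_m(\R^{n-1})\to \mathrm{Gr}_m(\R^{n-1})/\SO{n-1}$, which is a single point, loses all information. So I would instead restrict to the $AN$-invariance: $\Phi$ is an $AN$-invariant measurable map on $\M_{\rho_1}$.

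By Moore's Ergodicity Theorem, as in the proof of Theorem~\ref{thm:Hopftheorem}, the closed non-compact subgroup $AN$, or already $A$, acts ergodically on $\M_{\rho_1}$ with respect to the invariant measure built from Haar measure and $\mu_1$. Hence $\Phi$ is a.e.\ equal to a constant plane $W_0$. The key remaining step, and the main obstacle, is deriving a contradiction from this. A constant $\Phi$ means $V$ is invariant under the full $M$-stabiliser at a.e.\ frame. More precisely, $\Phi(um,y)=m^{-1}\Phi(u,y)$ for $m\in M$, so constancy forces $m W_0=W_0$ for all $m\in\SO{n-1}$. This requires $W_0$ to be $0$ or $\R^{n-1}$, since $\SO{n-1}$ acts transitively on $m$-planes for $n-1\ge2$; here $n\ge3$ is used. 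Both possibilities contradict $1\le m\le n-2$. I expect the delicate point to be the measure-theoretic justification that the Hopf measure class on $\partial(\tilde M_1\times F)$ corresponds to the measure on $\M_{\rho_1}$ used in Moore's theorem. This is the same Furstenberg correspondence invoked after Theorem~\ref{thm:Hopftheorem}, now fibred over $\partial$ instead of $\partial^2$.

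We conclude $e\equiv1$ a.e., so the differential of $\partial\tilde H$ is conformal a.e.\ on each $\partial^y\tilde M_1$ for $\mu_1$-a.e.\ $y$. A $1$-quasiconformal map of $S^{n-1}$ with $n-1\ge2$ is Möbius, by the classical regularity of $1$-quasiconformal maps (Gehring--Reshetnyak). Hence the restriction of $\partial\tilde H$ to such fibres is a conformal diffeomorphism. The dilatation at each point is given by the limit in Definition~\ref{def:quasi-conformal}, and it depends continuously on $y$ in the sense that uniform limits of Möbius maps are Möbius. Since $\mu_1$ has full support and $\partial\tilde H$ is continuous (Proposition~\ref{prop:boundarymap}), this property extends from a dense set of $y$ to every $y\in F$. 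This gives the leafwise conformal diffeomorphism claimed.
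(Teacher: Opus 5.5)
Your proposal is correct. It shares the paper's overall skeleton: Bourdon's idea of applying Moore's theorem on $\M_{\rho_1}$, then \cite[Theorem 1.5]{Bourdon} (equivalently Gehring--Reshetnyak) on $\mu_1$-almost every fibre, then density of a full-measure set of $y$ plus continuity of $\partial\tilde H$. The ergodic step, however, uses a different invariant.

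\textbf{The paper's route.} The paper never builds a plane field. It passes to the leafwise unit tangent bundle of the boundary, $\partial^1(\tilde M_1\times F)=\isomH/\SO{n-2}AU\times F$, and uses the scalar stretch function $f(v)=\|(\partial\tilde H)_{*z}(v)\|/\|(\partial\tilde H)_{*z}\|$. This function is $\Gamma_1$-invariant because both groups act conformally. Ergodicity of $\SO{n-2}AU$ makes $f$ constant a.e. Since $f(z,\cdot)$ is continuous in $v$ and attains the value $1$, the constant is $1$, so the differential is conformal a.e. directly, with no contradiction argument.

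\textbf{Your route.} You instead prove a foliated version of Thurston's claim that there is no $\Gamma_1$-invariant measurable field of $m$-planes with $1\le m\le n-2$. The top singular subspace gives an $AN$-invariant, $\SO{n-1}$-equivariant map $\Phi$ into $\mathrm{Gr}_m(\R^{n-1})$. Ergodicity of $A$, exactly the input already used for Theorem~\ref{thm:Hopftheorem}, makes $\Phi$ constant. Then $\SO{n-1}$ having no fixed point on that Grassmannian gives the contradiction.

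\textbf{Comparison.} Your argument is closer to the frame-field approach of \cite{Lucker} and yields a slightly stronger intermediate statement. The price is extra bookkeeping: invertibility of the differential a.e., measurability and constancy of $m$, and the check that $AN$ acts on $T_{+\infty}S^{n-1}$ by homotheties. The paper's scalar invariant, living on the homogeneous space with stabiliser $\SO{n-2}AU$, encodes the $\SO{n-1}$-equivariance automatically and is shorter.

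\textbf{One small correction.} In your last paragraph, the claim that the dilatation ``depends continuously on $y$'' is neither needed nor justified as stated. What you actually use is this: for $y_k\to y$ in the full-measure set, the restrictions of $\partial\tilde H$ converge uniformly, by continuity of $\partial\tilde H$ and of the leafwise identifications with $S^{n-1}$. A uniform limit of M\"obius maps that is a homeomorphism is M\"obius, which finishes the argument.
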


\begin{proof} By Corollary~\ref{cor:conformal}, we know that $e$ is equal a.e. to a constant $K$. Now, as we announced above, we will prove that $K = 1$.
To do so, we will see that the differential of the boundary map $\partial \tilde H$ is conformal in restriction to a subset of full measure. 
%that project on the whole $F$. 
According to \cite[Theorem 1.5]{Bourdon}, the boundary map $\partial \tilde H$ becomes a
%leafwise 
M\"obius homeomorphism and hence a 
%leafwise 
conformal diffeomorphism 
in restriction to any leaf contained in that set --that is, it is conformal from $\partial^y\tilde M_1$ to $\partial^{k(y)}\tilde M_2$ for a.e. $y\in F$. On the other hand, we know that it is an homeomorphism from $\partial(\tilde M_1\times F)$ to $\partial(\tilde M_2\times F)$. By continuity, it must send spheres in $\partial^y\tilde M_1$ to spheres in $\partial^y\tilde M_2$ for every $y\in F$. Namely, the map $\partial \tilde H$ is globally leafwise conformal.

Indeed, the boundary at infinity $\partial (\tilde M_1 \times F)$ is leafwise diffeomorphic to the homogeneous space $\isomH / P \times F$ where $P = \SO{n-1}AU$ is the minimal parabolic subgroup of $\isomH$ and $U$ is the stable horospherical subgroup of $\isomH$. Its leafwise tangent bundle is diffeomorphic to $\isomH / \SO{n-2}AU \times F$. We will denote $\partial^1 (\tilde M_1 \times F)$ this homogeneous space. By Moore's Ergodicity Theorem \cite[Theorem 2.2.15]{Zimmer}, the right action of the subgroup $\SO{n-2}AU < \isomH$ on $\M_{\rho_1}$ is ergodic. 
Then $\Gamma_1$ acts ergodically on 
$$ 
\partial^1 (\tilde M_1 \times F) = \isomH / \SO{n-2}AU \times F
$$ 
endowed with the product of the Haar measure on the first factor $\isomH / \SO{n-2}AU$ and the invariant probability measure $\mu_1$ on the second one $F$. For each unit tangent vector $v \in \partial^1 (\tilde M_1 \times F)$ based at a generic point $z \in \partial (\tilde M_1 \times F)$, the stretch function 
$$
f(v) = \frac{\norm{(\partial \tilde H)_{*z}(v)} }{\norm{(\partial \tilde H)_{*z}}}
$$
is invariant under the action of $\Gamma_1$ and hence is constant a.e.
This means that the differential of $\partial \tilde H$ is conformal a.e. 
We can fix the points $y \in F$ and restrict the differential to the fibres $\partial^y \tilde M_1$, thus being conformal a.e. As has already been noted, according to 
\cite[Theorem 1.5]{Bourdon}, the map $\partial \tilde H$ is a leafwise M\"obius homeomorphim, and then a leafwise conformal diffeomorphism.
\end{proof}

Finally, to completes the proof of Theorem~\ref{thm:Mostowsuspension}, we need to prove the following proposition: 

\begin{proposition} \label{prop:continuousisometry} 
The boundary homeomorphism 
$$
\partial \tilde H : \partial (\tilde M_1 \times F) \to \partial (\tilde M_2 \times F)
$$ 
extends to an equivariant leafwise isometric homeomorphism
$$
\tilde I : \tilde M_1 \times F \to \tilde M_2 \times F
$$
which is homotopic to 
$$
\tilde H : \tilde M_1 \times F \to \tilde M_2 \times F
$$
by an integral homotopy. Thus $\tilde I$ induces a homeomorphicm
$$I : M_{\rho_1} \to M_{\rho_2}$$ 
which is leafwise isometric and integrable homotopic to $h$.
\end{proposition}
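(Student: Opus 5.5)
We extend the boundary map leafwise, following the final step of Thurston's proof. Fix $y \in F$. By Proposition~\ref{prop:conformal}, the restriction of $\partial \tilde H$ to $\partial^y \tilde M_1$ is a M\"obius homeomorphism onto $\partial^{k(y)} \tilde M_2$. Since $\tilde g_1^y$ and $\tilde g_2^{k(y)}$ are hyperbolic, there are isometries $\psi^y_1 : \tilde M_1 \times \{y\} \to \H^n$ and $\psi^{k(y)}_2 : \tilde M_2 \times \{k(y)\} \to \H^n$. Conjugated by their boundary extensions, $\partial \tilde H$ becomes a M\"obius transformation of $S^{n-1}$. By Poincar\'e extension, it is therefore the boundary value of a unique isometry of $\H^n$. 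We define $\tilde I$ on $\tilde M_1 \times \{y\}$ to be the conjugate of this isometry, with values in $\tilde M_2 \times \{k(y)\}$. Equivalently, $\tilde I(x,y)$ is the unique point fixed by the rule that $\tilde I$ maps the geodesic with endpoints $\xi,\eta$ to the geodesic with endpoints $\partial\tilde H(\xi),\partial\tilde H(\eta)$. This intrinsic description is independent of the choice of $\psi$'s. By construction, $\tilde I$ is a leafwise isometry covering $k$ on the second factor.

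Next we check equivariance and continuity. An isometry is determined by its boundary values. Since $\partial \tilde H$ is equivariant by Proposition~\ref{prop:boundarymap}, both $\tilde I \circ \gamma_1$ and $\bar h_\ast(\gamma_1) \circ \tilde I$ have the same boundary values on each leaf, so they coincide. For transverse continuity, we use the description of $\tilde I(x,y)$ as the point of the geodesic with endpoints $\partial\tilde H(\xi), \partial\tilde H(\eta)$ obtained from a configuration of boundary points. For instance, take $\xi,\eta,\zeta,\omega \in \partial^y \tilde M_1$ such that $x$ is the intersection point of the geodesic $(\xi,\eta)$ with the orthogonal projection of $(\zeta,\omega)$. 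The point $\tilde I(x,y)$ is then the corresponding intersection point for the image quadruple. These geometric constructions depend continuously on the endpoints and on the metric $\tilde g_2^{k(y)}$, which varies continuously with $y$. Since $\partial \tilde H$ is globally continuous, $\tilde I$ is continuous on $\tilde M_1 \times F$. The same applies to the extension of $(\partial \tilde H)^{-1}$, which is the inverse of $\tilde I$, so $\tilde I$ is a homeomorphism. To make this step cleaner, I would work with the global trivialisations of Remark~\ref{rem:tubes}, where the boundary is $S^{n-1}\times F$ with a fixed metric.

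For the homotopy, the key claim is that $\tilde H$ and $\tilde I$ are at uniformly bounded leafwise distance. For each $(x,y)$, take orthogonal geodesics through $x$ in $\tilde M_1 \times\{y\}$. By Lemma~\ref{lemma:geodesics_go_to_quasi_geodesics}, $\tilde H$ sends each of them within bounded distance $D$ of the geodesic with the image endpoints. $\tilde I$ sends them exactly onto those geodesics. The image geodesics are the $\tilde I$-images of orthogonal geodesics through $x$, so they meet orthogonally at $\tilde I(x,y)$. Hence $\tilde H(x,y)$ lies within $D$ of two such geodesics, and by hyperbolic geometry it is within some $D'$ of their intersection point. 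The constant $D'$ depends only on the uniform quasi-isometry constants of Proposition~\ref{prop:quasi-isometry}. We then define the homotopy
$$\tilde H_t(x,y)=\sigma_{(x,y)}(t),$$
where $\sigma_{(x,y)}$ is the unique geodesic segment in $\tilde M_2\times\{k(y)\}$ from $\tilde H(x,y)$ to $\tilde I(x,y)$. This is an integral homotopy, because each path stays in a leaf. It is continuous, and it is equivariant because isometries preserve geodesics. It therefore descends to an integral homotopy between $H$ and the induced map $I$, and $H$ is integrably homotopic to $h$ by Proposition~\ref{prop:lifting}.

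The main obstacle I expect is the transverse continuity of $\tilde I$ when the metrics $\tilde g_i^y$ themselves vary with $y$. The geometric reconstruction from boundary quadruples must be shown to be continuous jointly in the endpoints and in the metric. I would address this with the normal-tube uniformisation of Remark~\ref{rem:tubes}, so that all leaves carry the same Poincar\'e metric. Then $\tilde I$ is given by a map $F\to\isomH$, $y\mapsto$ the M\"obius transformation of $\partial\tilde H|_{S^{n-1}\times\{y\}}$. Its continuity follows from the continuity of $\partial\tilde H$, because a M\"obius transformation depends continuously on the images of three points.
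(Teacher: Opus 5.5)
Your proposal is correct and follows essentially the paper's route. You use the leafwise Poincar\'e extension of the M\"obius boundary maps, and you get transverse continuity from the continuity of $\partial \tilde H$ through the global uniformisation, exactly as in Remark~\ref{rem:othertubes}; the paper's main proof uses tubes of geodesic rays for this step. Beyond the paper, you supply the equivariance argument and the explicit geodesic integral homotopy, which the paper only asserts.

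One small correction: for $n \geq 4$, a M\"obius transformation of $S^{n-1}$ is not determined by the images of three points, since their stabiliser contains a copy of $\SO{n-2}$. Justify the continuity of $y \mapsto \tilde I|_{\H^n\times\{y\}}$ instead by using $n+1$ points in general position, or by the fact that uniform convergence of M\"obius maps on $S^{n-1}$ forces convergence in the isometry group of $\H^n$.
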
 

\begin{proof} 
As the boundary homeomorphism 
$$
\partial \tilde H : \partial (\tilde M_1 \times F) \to \partial (\tilde M_2 \times F)
$$ 
is leafwise conformal by Proposition~\ref{prop:conformal}, it extends to a map 
$$
\tilde I : \tilde M_1 \times F \to \tilde M_2 \times F
$$
which is an isometry in restriction to each horizontal leaf of $\tilde M_1 \times F$. Each of these isometries is also homotopic to the corresponding restriction of the homeomorphism 
$$
\tilde H : \tilde M_1 \times F \to \tilde M_2 \times F
$$
preserving the horizontal lamination (defined by the projection on the second factor $F$). Our aim is to prove that $\tilde I$ is continuous, so \emph{a fortiori} a homeomorphism preserving the projection on the second factor $F$. To do so, we shall consider a \emph{tube of geodesic rays} in $\tilde M_1 \times F$, i.e. a continuous map 
$$
\tau : [0,+\infty) \times F \to \tilde M_1 \times F
$$
such that for each point $y \in F$, the curve $\tau^y : [0,+\infty) \to \tilde M_1 \times \{y\}$ given by $\tau^y(t) = \tau(t,y)$ is a geodesic ray (relatively to the Riemannian metric $\tilde g_1^y$) and its initial tangent vector depends continuously on $y$. Then for each point $y \in F$, the end point $\tau^y(+\infty) \in \partial^y \tilde M_1$ of the ray $\tau^y$ also depends continuously on $y$.
Now our aim reduces to see that, reparametrising $F$ if necessary, the composition $\tilde I \circ \tau$ is again a tube of geodesic rays in $\tilde M_2 \times F$. If we denote by $z$ the image of $\tilde I(0,y)$ by the projection on the second factor $F$, then the restriction of $\tilde I \circ \tau$ to 
$[0,+\infty) \times \{y\}$ is a geodesic ray in $\tilde M_2 \times \{z\}$ that depends continuously on $z$ (since the Riemannian metric $\tilde g_2^z$ depend continuously on $z$). As the homeomorphism induced by $\tilde I$ to the boundary $\partial (\tilde M_1 \times F)$ is equal to $\partial \tilde H$, its end point also depends continuously on $z$, so its initial tangent vector still depends continuously on $z$. Note however that the continuous leafwise isometry 
$\tilde I : \tilde M_1 \times F \to \tilde M_2 \times F$
does not necessarily preserve the projection on the first factor. Thus the homeomorphism 
$$I : M_{\rho_1} \to M_{\rho_2}$$ 
induced by $\tilde I$ is leafwise isometric and integrable homotopic to $h$, but there is no reason to get a bundle isomorphism in general. 
\end{proof} 

\begin{remark} \label{rem:othertubes}
In Proposition~\ref{prop:continuousisometry}, if preferred, tubes of  geodesic rays can be replaced by  tubes of \lq\lq half-planes". Just like in Thurston's notes \cite{ThurstonNewBook} and L\"ucker's thesis \cite{Lucker}, these are regions which are bounded by a hyperplane perpendicular to a geodesic ray. In fact, as proposed in Remark~\ref{rem:tubes}, the use of global uniformisation maps simplifies the proof of Proposition~\ref{prop:continuousisometry}, reducing the foliated context to the classical context. Indeed, in this approach, the boundary map 
$$\partial \tilde H : S^{n-1} \times F \to S^{n-1} \times F$$ 
is given by 
$$\partial \tilde H (x,y) = (\tilde h (x,y), k(y))$$
for all $x \in S^{n-1}$ and all $y \in F$. Here $\tilde h$ is a conformal map in $x$ (depending continuously on $y$) and $k$ is a homeomorphism. Then $\partial \tilde H$ 
extends immediately to a leafwise isometry 
$$\tilde I : \H^n \times F \to \H^n \times F$$ 
given by 
$$\tilde I (x,y) = (\tilde \imath (x,y), k(y))$$
where $\tilde \imath : \H^n \times \{y\} \to \H^n$ is an isometric extension of the conformal map $\tilde h : S^{n-1} \times \{y\} \to S^{n-1}$, both depending continuously on $y$.
\end{remark}

Now the proof of Theorem~\ref{thm:Mostowsuspension} is complete. \qed 

\section{Homogeneous solenoidal manifolds} 
\label{Shomogeneous}

Here we deal with a notion that is apparently more general than the notion of suspension, although both notions are equivalent for solenoidal manifolds and hence Theorem~\ref{thm:Mostowinvlimit} also applies in this context.

\subsection*{Lie-homogeneous hyperbolic solenoidal manifolds} 

Let $\Gamma$ be a cocompact discrete subgroup of the product $\isomH \times G$ of the Lie group $\isomH$ and a Cantor group $G$. The diagonal action of $\Gamma$ on $\H^n \times G$ is free and properly discontinuous and hence the quotient
$$
\X = \Gamma \bs \big( \H^n \times G \big) =  
\Gamma \bs \big(\isomH/\SO{n} \times G \big) 
$$
is a solenoidal manifold of dimension $n$. Since $G$ acts by right translations on $\X$, we deduce $\X$ is topologically homogeneous  (homeomorphic to a McCord solenoid according to \cite{ClarkHurder}). Similarly, the unit tangent bundle
$$
\X^1= \Gamma \bs \big( T^1\H^n \times G \big) =  
\Gamma \bs \big(\isomH/\SO{n-1}  \times G \big) 
$$
is a topologically homogeneous solenoidal manifold of dimension $2n-1$. The homogeneous manifold 
$$
\E =  \Gamma \bs \big( \isomH \times G \big) 
$$ 
admits a double structure of principal $\SO{n}$-bundle over $\X$ and principal $\SO{n-1}$-bundle over $\X^1$. We will say $\X$ is a \emph{Lie-homogeneous hyperbolic solenoidal manifold}. 

\subsection*{Riemann-homogeneous hyperbolic solenoidal manifolds} 
Let $\Gamma$ be a cocompact discrete subgroup of the product $\isomH \times G$ where $G = \Isom (F)$ is the group of isometries of the Cantor set $F$. If $\Gamma$ is torsion-free, then the diagonal action of $\Gamma$ on $\H^n \times F$ is free and properly discontinuous and hence the quotient
$$
\X = \Gamma \bs \big( \H^n \times F \big) =  
\Gamma \bs \big(\isomH/\SO{n} \times F \big) 
$$
is a solenoidal manifold of dimension $n$. The unit tangent bundle
$$
\X^1= \Gamma \bs \big( T^1\H^n \times F \big) =  
\Gamma \bs \big(\isomH/\SO{n-1}  \times F \big) 
$$
is again a solenoidal manifold of dimension $2n-1$. The homogenous manifold 
$$
\E =  \Gamma \bs \big( \isomH \times G \big) 
$$ 
admits a double structure of principal $\SO{n}$-bundle over $E = \E/\SO{n}$ and principal $\SO{n-1}$-bundle over $E^1 = \E/\SO{n-1}$.  Thus, by construction, $\X$ is the fibre bundle with fibre $F$ associated to $E = \E/\SO{n}$ and  $\X^1$ is the fibre bundle with fibre $F$ associated to $E^1 = \E/\SO{n-1}$. We will say $\X$ is a \emph{Riemann-homogeneous hyperbolic solenoidal manifold}. 

\begin{theorem} \label{thm:homogeneous}
Let $\X$ be a Lie-homogeneous (resp. Riemann-homogeneous) hyper\-bolic solenoidal manifold with dense leaves. Then $\X$ is the suspension of a representation into a Cantor group (resp. into the group of isometries of a Cantor set).  
\end{theorem}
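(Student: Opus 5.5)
The plan is to find a torsion-free cocompact lattice $\Gamma_0 < \isomH$ and a homomorphism $\rho : \Gamma_0 \to G$ such that $\Gamma$ is (commensurable with, and in fact equal to) the graph $\Gamma_\rho = \{(\gamma,\rho(\gamma))\}$. The first projection $p_1 : \isomH \times G \to \isomH$ is the natural candidate. Since $G$ is compact, $\ker p_1|_\Gamma = \Gamma \cap (\{1\}\times G)$ is a discrete subgroup of the compact group $\{1\}\times G$, hence finite. In the Lie-homogeneous case this finite subgroup acts freely on $\H^n\times G$ by right-invariant (left) translations in $G$. In the Riemann case it acts freely on $\H^n\times F$ by the torsion-free assumption, so it must be trivial, because a nontrivial finite element would be torsion. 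In the Lie case I would argue as follows: $\Gamma$ acts freely on $\H^n \times G$ and the quotient is a solenoidal manifold whose leaves are quotients of $\H^n$ by stabilisers. A finite group acting through $G$ alone does not destroy freeness, so here I would instead pass to the quotient of $G$ by this finite normal subgroup, or simply note that $\Gamma$ may be replaced by $\Gamma/(\Gamma\cap G)$ acting on $\H^n\times G/(\Gamma\cap G)$. Either way, I reduce to $p_1|_\Gamma$ injective, so $\Gamma = \Gamma_\rho$ with $\rho = p_2\circ (p_1|_\Gamma)^{-1}$ defined on $\Gamma_0 = p_1(\Gamma)$.

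The second step, which I expect to be the main obstacle, is showing that $\Gamma_0 = p_1(\Gamma)$ is discrete (hence a cocompact lattice) in $\isomH$. Projections of lattices in products need not be discrete in general. Here, however, compactness of $G$ should make it work. If $\gamma_k = (a_k,b_k)\in\Gamma$ with $a_k \to 1$, then by compactness a subsequence has $b_k \to b$. Consequently $\gamma_k^{-1}\gamma_{k+1}\to (1,1)$, and discreteness of $\Gamma$ forces these quotients to be eventually trivial. Thus $a_k$ is eventually constant, equal to the limit $1$, and discreteness follows. Cocompactness of $\Gamma_0$ follows because $\isomH\times G \to \isomH$ is proper and equivariant, so a compact fundamental domain for $\Gamma$ projects onto one for $\Gamma_0$. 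Torsion-freeness of $\Gamma_0$ comes from freeness of the action on $\H^n\times G$ (resp. $\H^n\times F$) combined with the injectivity obtained above: a torsion element $a$ of $\Gamma_0$ fixes a point of $\H^n$, and $\rho(a)$ has finite order in a compact group, so I need $\rho(a)$ to have no fixed point. When $F=G$ this holds automatically, since left translation by a nontrivial element has no fixed point. I would handle the Riemann case with the torsion-free hypothesis on $\Gamma$, and if necessary by passing to a finite-index torsion-free subgroup via Selberg's lemma, checking that this corresponds to a finite cover compatible with the suspension structure.

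Finally, once $\Gamma = \Gamma_\rho$ with $\Gamma_0$ a torsion-free cocompact lattice, I identify
$$\X = \Gamma_\rho\bs(\H^n\times G) = M_\rho, \qquad M = \Gamma_0\bs\H^n,$$
exactly as in Definition~\ref{def:suspension} and the description of $\E_\rho$ in Section~\ref{Ssuspension}, and similarly with $F$ in the Riemann case. The density of the leaves then translates into density of $\rho(\Gamma_0)$ in $G$ (resp. minimality of the action on $F$), as in Remark~\ref{rem:homogeneous}. This is not needed for the identification itself, but it gives the hypotheses of Theorem~\ref{thm:Mostowsuspension} through Remark~\ref{rem:ergodic}.
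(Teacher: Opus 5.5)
There is a genuine gap in the Lie-homogeneous case, and it sits exactly where you say the dense-leaves hypothesis is not needed. Write $K=\{c\in G : (1,c)\in\Gamma\}$ for the finite kernel of $p_1|_\Gamma$. To get a suspension of a representation into a \emph{Cantor group}, the quotient $K\bs G$ must be a group, i.e.\ $K$ must be normal in $G$. You call $K$ ``normal'', but a priori it is only normal in $\Gamma$. What comes for free is that $p_2(\Gamma)$ normalizes $K$, since $(a,b)(1,c)(a,b)^{-1}=(1,bcb^{-1})$. This lets $\Gamma/K$ act on the coset space $K\bs G$. However, it only exhibits $\X$ as a suspension with Cantor fibre $K\bs G$, through $N_G(K)/K$ acting on cosets, not through a Cantor group acting on itself by translations. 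The missing step, which is the core of the paper's proof, is this: dense leaves means $p_2(\Gamma)$ is dense in $G$, and the normalizer $N_G(K)$ of a finite subgroup is closed. Hence $N_G(K)=G$, and $G_1=K\bs G$ is a Cantor group receiving $\rho_1:\Gamma/K\to G_1$. Without density, $K$ need not be normal in $G$ (take $\Gamma=\Lambda\times K$ with $\Lambda$ a torsion-free cocompact lattice and $K$ a non-normal finite subgroup), so density is genuinely used in the identification.

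Your torsion-freeness argument in the Lie case is also incorrect. Freeness of the action only excludes torsion elements $a\in p_1(\Gamma)$ with $\rho(a)=1$. The fact that left translation by $\rho(a)\neq 1$ has no fixed point is precisely what allows torsion in $p_1(\Gamma)$ to coexist with a free action: take $\Gamma=\{(a,\iota(a))\}$ for a cocompact lattice $\Lambda$ with torsion and $\iota:\Lambda\to\hat\Lambda$ its profinite completion. So $p_1(\Gamma)\bs\H^n$ may only be an orbifold, and the paper has to ``reduce $\Gamma_1$ if necessary''. Passing to a finite cover of $\X$, as you suggest, is not the target; you must rewrite $\X$ itself, and this again uses density. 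Pick an open normal subgroup $U$ of $G_1$ avoiding the images of representatives of the finitely many conjugacy classes of nontrivial torsion elements; these images are nontrivial by freeness. Then $\Lambda'=\rho_1^{-1}(U)$ is torsion-free of finite index. Since $\rho_1(\Gamma_1)U=G_1$, the inclusion induces $\Lambda'\bs(\H^n\times U)\cong\X$, a suspension over a closed manifold. The remaining steps agree with the paper: finiteness of the kernel, discreteness and cocompactness of $p_1(\Gamma)$ by properness of $p_1$, and the Riemannian case via torsion-freeness of $\Gamma$.
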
 

\begin{proof} Assume first $\X$ is a Lie-homogeneous hyperbolic solenoidal manifold. Then the restriction of the projection
$$p_1 : \isomH \times G \to \isomH$$ 
to $\Gamma$ induces a epimorphism onto the subgroup $\Gamma_1 = p_1(\Gamma)$ of $\isomH$. By compactness of $G$, its kernel $\Gamma_0 = \Gamma \cap G$ is finite and therefore $\Gamma_1$ is a discrete subgroup of $\isomH$ isomorphic to $\Gamma_0 \bs \Gamma$. Moreover, reducing $\Gamma_1$ if necessary, we can assume that $\Gamma_1$ is torsion-free. The restriction of the projection 
$$p_2 : \isomH \times G \to G$$ 
to $\Gamma$ 
define a representation of $\Gamma$ into $G$ such that $\Gamma_2 = p_2(\Gamma)$ is a dense subgroup of $G$. Since the normalizer of $\Gamma_0$ in $G$ is a closed subgroup of $G$ containing $\Gamma_2$, we deduce $\Gamma_0$ is a normal subgroup of $G$. Therefore the restriction of $p_2$ to $\Gamma$ induces a representation $\rho_1$ of  
$\Gamma_1 = \Gamma_0 \bs \Gamma$
into the quotient group $G_1 = \Gamma_0 \bs G$ with dense image. Then 
$$
\X = \Gamma \bs \big( \H^n \times G \big)
$$
is leafwise isometric to the quotient of 
$$
\big( \Gamma_0 \bs \H^n \big)  \times G_1 = \Gamma_0 \bs \big( \H^n  \times G \big)
$$
by the diagonal action of $\Gamma_1 = \Gamma_0 \bs \Gamma$ defined from the representation $\rho_1$. Thus $\X$ is a McCord solenoid obtained by suspension.

The same proof is still valid for a Riemannian-homogeneous hyperbolic solenoidal manifold, although principal $G$-bundles used above must be replaced by associated fibre bundles with fibre $F$. 
\end{proof}

This theorem remains valid for any Lie and Riemannian hyperbolic foliation whose transverse structure is represented by a compact Lie group $G$. 
%This is also true if $G$ is a compact p-adic Lie group (see \cite{Ratner1998} and references therein). 
However, there exist many examples of Lie and Riemannian homogeneous foliations with hyperbolic dense leaves which are not suspensions. Nevertheless, in a forthcoming paper \cite{AMVMostowfoliations}, we shall prove that Mostow rigidity remains valid for all Lie and Riemannian foliations with hyperbolic dense leaves of dimension $\geq 3$.
%as well for the intermediate situation of p-adic Lie groups.

\section{Skew solenoidal manifolds}
 \label{Snonhomogeneous}

Here we introduce solenoidal manifolds $\X$ whose transverse structure is modelled by the Cantor group $G$, so topologically homogeneous, although they are not necessarily Lie-homogeneous. As before, we also consider the Riemannian case where $G$ is a compact group acting isometrically on the Cantor set. This condition involves a restriction (which will be removed in \cite{AMVMostowfoliations}), but once again reduces Molino's theory to the distinction between the cases where $G$ is the Cantor group or $G$ is the isometry group of the Cantor set. In fact, we will focus on a special type of Lie and Riemannian solenoidal manifolds that we will call \emph{skew solenoidal manifolds}. In this section all solenoidal manifolds will be assumed minimal, that is, with dense leaves. 
\medskip

\begin{definition} \label{def:Lie}
Let $\X$ be a solenoidal manifold of dimension $n$. We will say $\X$ is a \emph{Lie solenoidal manifold} (resp. \emph{Riemannian solenoidal manifold}) if $\X$ admits a box decomposition $\B$ into flow boxes 
$$
\varphi_i : B_i \stackrel{\cong}{\longrightarrow} \bar \D^n \times T_i
$$
with vertical boundary 
$$
\partial^v \! B_i = \varphi_i^{-1} \big( \partial \bar \D^n \times T_i \big)
$$ 
such that the change of chart 
$$
\varphi_{i} \circ \varphi_j^{-1} : \varphi_j (B_i \cap B_j) \to  \varphi_i (B_i \cap B_j)
$$
is given by 
\begin{equation} \label{eq:changecharts}
(\varphi_{ij}(x),g_{ij}y),
\end{equation}
where $x \in \partial \bar \D^n$, $y \in T_i \cap T_j \subset G$, and $g_{ij}$ is an element of the Cantor group $G$ translating $y$ (resp. $y \in T_i \cap T_j \subset F$, and $g_{ij}$ is an isometry in $G = \Isom(F)$ moving $y$).
%ES UNA DEFINICION RESTRICTIVA DE VARIEDAD SOLENOIDAL RIEMANNIANA.
\medskip 

\noindent
Observe that, in both cases, the foliated structure of $\X$ is given by a \v{C}ech 1-cocycle associated to $\B$ with values in $G$. As mentioned above, in the Riemannian case, the existence of such a cocycle is restrictive, since, in general, $g_{ij}$ is only a local isometry. 
\end{definition}
 
The inverse limit of any tower of coverings is an example of Riemannian solenoidal manifold, while it becomes an example of Lie solenoidal manifold if all coverings are regular. This includes the homogeneous solenoidal manifolds described the previous section. 

\subsection*{A natural family: skew solenoidal manifolds.}
Let $\Gamma$ be a discrete group and $\rho : \Gamma \to G$ a representation such that $\rho(\Gamma)$ is a dense subgroup of $G$. Let 
$$
\varphi: \Gamma \times G \to \isomH
$$ 
be a \emph{cocycle} satisfying 
$$
\varphi(\gamma_1\gamma_2,y) = \varphi(\gamma_1,\rho(\gamma_2)y)
 \varphi(\gamma_2,y)
$$
for all $\gamma_1\gamma_1 \in \Gamma$ and all $y \in G$ (cf. \cite[Section 4.2]{Zimmer}). Such a cocycle
allows us to define a free action 
$$
\Gamma \times \big( \isomH \times G \big) \to \big( \isomH \times G \big)
$$
given by 
\begin{equation} \label{eq:nondiagonal}
\gamma.(u,y) = (\varphi(\gamma,y)u,\rho(\gamma)y)
\end{equation}
for all $\gamma \in \Gamma$, all $u \in \isomH$, and all $y \in G$. By construction, the natural projection $p_2 : \isomH \times G \to G$ is $\Gamma$-equivariant.
 
\begin{definition} Two cocycles $\varphi, \varphi' : \Gamma \times G \to \isomH$ are \emph{cohomologous} if there is a map $\psi : G \to \isomH$ such that
$$
\varphi' (\gamma,g) = \psi(\rho(\gamma)g)\varphi(\gamma,g)\psi(g)^{-1}
$$
for all $\gamma \in \Gamma$, and all $g \in G$. A cocycle $\varphi$ is said to be 
\medskip 

\noindent
(i) \emph{nul cohomologous} if it is cohomologous to the cocycle with constant value $I \in \isomH$,
\medskip 

\noindent
(ii)  \emph{untwisted} if there is a representation $\tau : \Gamma \to \isomH$ such that $\varphi$ is cohomologous to the cocycle given by $(\gamma,g) \mapsto \tau(\gamma)$ for all $\gamma \in \Gamma$, and all $g \in G$.
\end{definition}

Assume that the action~\eqref{eq:nondiagonal} is properly discontinuous and cocompact. Then 
$$
\X = \Gamma \bs \big( \H^n \times G \big) =  
\Gamma \bs \big(\isomH/\SO{n} \times G \big) 
$$
is a $n$-dimensional Lie solenoidal manifold with dense leaves, which will be called \emph{skew}. Observe that $\H^n \times G$ is a skew-product over the (uniquely) ergodic action of $\Gamma$ on $G$ (see \cite{Glasner} for a precise definition) although the Lebesgue measure on $\H^n$ is infinite. If the cocycle $\varphi$ is not untwisted, the dynamics of $\X$ is neither Lie-homogeneous (in the sense of Section~\ref{Shomogeneous}) nor homogeneous (in the sense of Remark~\ref{rem:homogeneous}). A general discussion about homogeneity of Lie laminations will be detailed in \cite{AMVMostowfoliations}.
\medskip 

Replacing the Cantor group with the isometry group $G=\Isom(F)$, the previous construction provides a skew Lie solenoidal manifold 
$$
E = \Gamma \bs \big( \H^n \times G \big) =  
\Gamma \bs \big(\isomH/\SO{n} \times G \big). 
$$
Once again, the quotient 
$$
\X = \Gamma \bs \big( \H^n \times F \big) =  
\Gamma \bs \big(\isomH/\SO{n} \times F \big) 
$$
is a \emph{skew Riemannian solenoidal manifold} which is naturally associated to the skew Lie solenoidal manifold $E$ (although they no longer admit fibre bundle structures as in the case of suspensions). We will interpret $E$ as the solenoidal version of the \emph{transverse frame bundle} in Molino's theory \cite{Molinobook}. As before, if $\varphi$ is not untwisted, the dynamics of $\X$ is neither Riemann-homogeneous nor homogeneous. 

\subsection*{Mostow rigidity for skew solenoidal manifolds.}
As we have just observed, in both cases Lie and Riemannian, a skew solenoidal manifold $\X$ does not in general admit a fibre bundle structure. However, due to its own construction, $\X$ always admits a global uniformisation such as those appearing in Remark~\ref{rem:tubes}. Therefore, even if the proof of Theorem~\ref{thm:Mostowsuspension} does not apply for skew solenoidal manifolds, the alternative proof described in Remarks~\ref{rem:tubes}~and~\ref{rem:othertubes} allows us to formulate the following rigidity theorem (named Theorem~\ref{thm:intro3} in the introduction): 

\begin{theorem}  \label{thm:Mostowskew}
Let $\Gamma_1$ and $\Gamma_2$ be two discrete groups, and $G$ be a Cantor group or the isometry group of a Cantor set.
Let $\X_1$ and $\X_2$ be two minimal skew hyperbolic solenoidal manifolds of dimension $n \geq 3$ obtained from two  representations 
$$
\rho_1 : \Gamma_1 \to G \text{ and } \rho_2 : \Gamma_2 \to G
$$ 
and two  cocycles
$$
\varphi_1 : \Gamma_1 \times G \to \isomH \text{ and } \varphi_2 : \Gamma_2 \times G \to \isomH.
$$
If $h : \X_1 \to \X_2$ is a homotopy equivalence, then there exists a leafwise isometry $I  : \X_1 \to \X_2$ integrable homotopic to $h$. \qed
\end{theorem}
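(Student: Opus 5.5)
The plan is to run the four-step scheme of the proof of Theorem~\ref{thm:Mostowsuspension}, but in the version based on global uniformisations described in Remarks~\ref{rem:tubes} and~\ref{rem:othertubes}, since $\X_1$ and $\X_2$ need not be fibre bundles. By construction, $\X_i = \Gamma_i \bs (\H^n \times G)$ (or $\Gamma_i\bs(\H^n\times F)$ in the Riemannian case). Here $\Gamma_i$ acts by $\gamma.(x,y) = (\varphi_i(\gamma,y)x,\rho_i(\gamma)y)$, so the quotient map $q_i : \H^n \times G \to \X_i$ is a leafwise isometric normal tube covering all of $\X_i$. The hyperbolic metric is taken to be the one induced by the Poincaré metric on the first factor. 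For a general leafwise hyperbolic metric, one first uses compactness to compare it with this one, so that the quasi-isometry constants below change by a uniform amount. The boundary at infinity is then canonically $S^{n-1}\times G$. The action of $\Gamma_i$ on it is $\gamma.(\xi,y)=(\varphi_i(\gamma,y)\xi,\rho_i(\gamma)y)$, which is leafwise conformal since $\varphi_i$ takes values in $\isomH$.

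\textbf{Lifting and boundary map.} As in Remark~\ref{rem:reducingthm2tothm1}, $h$ is automatically foliated, because leaves are path components. Since $\H^n$ is contractible and $q_i$ is a Galois covering with group $\Gamma_i$, $h$ lifts to a map $\tilde H : \H^n\times G \to \H^n\times G$. This lift is equivariant with respect to an isomorphism $\theta:\Gamma_1\to\Gamma_2$ and has the form $\tilde H(x,y)=(\tilde h(x,y),k(y))$, where $k$ is a homeomorphism conjugating $\rho_1$ to $\rho_2\circ\theta$. This is the argument of Proposition~\ref{prop:lifting} and Remark~\ref{rem:groupoids}, applied to the homotopy groupoids. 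The proof of Proposition~\ref{prop:quasi-isometry} applies verbatim: compactness of $\X_i$ gives uniform Lipschitz and bounded-displacement constants for $h,h'$ and the integral homotopies. Hence $\tilde H$ is a leafwise $(C_1,C_2)$-quasi-isometry with uniform constants. Lemma~\ref{lemma:geodesics_go_to_quasi_geodesics}, Lemma~\ref{lem:keylemma} and the argument of Propositions~\ref{prop:boundarymap} and~\ref{prop:quasi-conformal} then give an equivariant homeomorphism $\partial\tilde H(\xi,y)=(\tilde h(\xi,y),k(y))$ of $S^{n-1}\times G$. This map is uniformly $K$-quasi-conformal in $\xi$, and it is continuous in $(\xi,y)$ because the Morse constants are uniform and the metric on all leaves is literally the same.

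\textbf{Conformality; this is the main obstacle.} Rademacher–Stepanov applies leafwise exactly as in Proposition~\ref{prop:Rademacher}, using the Haar measure $\mu$ on $G$ (or the invariant full-support measure on $F$ from Remark~\ref{rem:ergodic}), which is $\rho_1(\Gamma_1)$-invariant and ergodic by density. The delicate point is that Theorem~\ref{thm:Hopftheorem} and the ergodicity used in Proposition~\ref{prop:conformal} were proved via Furstenberg's correspondence for a diagonal action. What we need here is the ergodicity of the twisted action of $\Gamma_1$ on $\isomH/\SO{n-2}AU\times G$ (and on $\partial^2$) with respect to Haar $\times\,\mu$. I would obtain this from the skew-product version of the induced action. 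Because the action on $\isomH\times G$ is properly discontinuous and cocompact, $\Omega=\Gamma_1\bs(\isomH\times G)$ is a compact space carrying a right $\isomH$-action and a finite invariant measure built from Haar $\times\,\mu$ (the cocycle is measure-preserving on the first factor by unimodularity). Ergodicity of this $\isomH$-action is equivalent to ergodicity of $\Gamma_1$ on $G$ (Zimmer's duality for the induced action, \cite[Section 4.2]{Zimmer}). Moore's theorem then yields ergodicity of the noncompact subgroups $A$ and $\SO{n-2}AU$ on $\Omega$. Dually, the twisted $\Gamma_1$-actions on $\isomH/A\times G$ and $\isomH/\SO{n-2}AU\times G$ are ergodic. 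With this in hand, the eccentricity is constant a.e.\ (Corollary~\ref{cor:conformal}). The stretch function argument of Proposition~\ref{prop:conformal} then shows that the differential is conformal a.e., so by \cite[Theorem 1.5]{Bourdon} $\tilde h(\cdot,y)$ is Möbius for a.e.\ $y$, and by continuity for every $y$.

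\textbf{Extension and descent.} Finally, as in Remark~\ref{rem:othertubes}, set $\tilde I(x,y)=(\tilde\imath(x,y),k(y))$, where $\tilde\imath(\cdot,y)$ is the unique isometric extension of the Möbius map $\tilde h(\cdot,y)$. The extension depends continuously on $y$, since the extension map from the Möbius group to $\isomH$ is continuous. Equivariance of $\tilde I$ follows from equivariance of $\partial\tilde H$ and uniqueness of the extension, so $\tilde I$ descends to a leafwise isometry $I:\X_1\to\X_2$. For each $y$, both $\tilde I$ and $\tilde H$ are within bounded distance of one another on the leaf, because they induce the same boundary map. The straight-line geodesic homotopy between them is therefore continuous and equivariant, and it descends to an integral homotopy between $I$ and $h$.
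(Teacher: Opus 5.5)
Your proposal is correct and takes the same route as the paper. The paper proves Theorem~\ref{thm:Mostowskew} only by pointing to the global-uniformisation version of the argument in Remarks~\ref{rem:tubes} and~\ref{rem:othertubes}, and your sketch carries this out, in particular the ergodicity of the twisted $\Gamma_1$-actions via Moore's theorem on $\Gamma_1\bs(\isomH\times G)$, which the paper leaves implicit.

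Two cautions, neither of which breaks the argument. First, the lift $\tilde H$ is in general only equivariant with respect to a locally constant cocycle $\Gamma_1\times G\to\Gamma_2$, not a group isomorphism $\theta$; every later step (invariance of eccentricity and stretch, equivariance and descent of $\tilde I$ and of the geodesic homotopy) goes through unchanged with that cocycle. Second, a merely continuous $h$ is only coarsely Lipschitz, which still suffices for the quasi-isometry bounds; alternatively, smooth it leafwise first as in Proposition~\ref{prop:lifting}.
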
 

\begin{corollary} \label{cor:Teichmullerskew}
The Teichmüller space of any skew hyperbolic solenoidal manifold of dimension $n\geq 3$ contains only one point. \qed
\end{corollary}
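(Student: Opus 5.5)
The plan is to run the four-step scheme of the proof of Theorem~\ref{thm:Mostowsuspension}, using the global uniformisation of Remarks~\ref{rem:tubes} and~\ref{rem:othertubes} in place of the fibre bundle structure. By construction, $\X_i = \Gamma_i \bs (\H^n \times F)$ with $F=G$ or $F$ the Cantor set, where $\Gamma_i$ acts by $\gamma.(x,y) = (\varphi_i(\gamma,y)x, \rho_i(\gamma)y)$. The projection $\H^n \times F \to \X_i$ is therefore a normal tube covering $\X_i$, which is leafwise isometric for the Poincaré metric on each $\H^n\times\{y\}$.

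\textbf{Lifting and quasi-isometry.} As in Remark~\ref{rem:reducingthm2tothm1}, $h$ is automatically foliated, because leaves are path components. I would first lift $h$ to an equivariant map $\tilde H : \H^n \times F \to \H^n \times F$, where the lifts are taken on each leaf, and which satisfies $\tilde H(\gamma.z) = \theta(\gamma).\tilde H(z)$ for some isomorphism $\theta:\Gamma_1\to\Gamma_2$. Since $\H^n$ is contractible, $\tilde H$ induces a map $k : F \to F$ on transverse coordinates, conjugating $\rho_1$ to $\rho_2\circ\theta$. Unique homotopy lifting, applied to $h'$ as in Proposition~\ref{prop:lifting}, shows that $k$ is a homeomorphism. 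Compactness of $\X_i$ then makes $h$ and $h'$ uniformly leafwise Lipschitz and uniformly close to the identity. This gives uniform quasi-isometry constants for $\tilde H$, as in Proposition~\ref{prop:quasi-isometry}.

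\textbf{Boundary map and quasi-conformality.} The boundary at infinity is now canonically $S^{n-1}\times F$. Lemma~\ref{lemma:geodesics_go_to_quasi_geodesics} and Lemma~\ref{lem:keylemma}, applied leafwise with uniform constants, give $\partial\tilde H(x,y)=(\tilde h(x,y),k(y))$, where each $\tilde h(\cdot,y)$ is $K$-quasi-conformal with $K$ independent of $y$. The map is equivariant for the actions $\gamma.(\xi,y)=(\varphi_i(\gamma,y)\xi,\rho_i(\gamma)y)$, which are conformal in $\xi$. Continuity in $y$ follows from the stability of quasi-geodesics (Morse lemma), as in Proposition~\ref{prop:boundarymap}, since the metric is the fixed Poincaré metric on every slice.

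\textbf{Ergodicity and conformality (main obstacle).} The key point, which I expect to be the main difficulty, is to replace Theorem~\ref{thm:Hopftheorem} and the Moore-type arguments by ergodicity of the skew action of $\Gamma_1$ on $\isomH\times F$, equivalently on $\partial^2 = (S^{n-1}\times S^{n-1}-\Delta)\times F$ and on $\partial^1 = \isomH/\SO{n-2}AU\times F$, with respect to Haar (Lebesgue) times the invariant measure $\mu_1$. This measure has full support and is unique, since $\rho_1(\Gamma_1)$ is dense in a compact group by minimality (Remark~\ref{rem:ergodic}). I would use the Furstenberg/Mackey correspondence for cocycles (cf. \cite[Section 4.2]{Zimmer}). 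The quotient $\E_1 = \Gamma_1\bs(\isomH\times G)$ is compact, because the action is cocompact, and carries the right $\isomH$-action, which commutes with the left skew action. Hence $\Gamma_1$-ergodicity on $\isomH/L \times F$ is equivalent to ergodicity of $L$ on $\E_1$ (or on the associated $F$-bundle) for the finite invariant measure. One checks that $\isomH$ acts ergodically on $\E_1$, by pushing $\isomH$-invariant functions to $\rho_1(\Gamma_1)$-invariant functions on $G$, which are constant by density. Moore's theorem \cite[Theorem 2.2.15]{Zimmer} then gives ergodicity of the noncompact subgroups $A$ and $\SO{n-2}AU$. With this in hand, the eccentricity is $\Gamma_1$-invariant and hence constant a.e., as in Corollary~\ref{cor:conformal}. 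The stretch-function argument of Proposition~\ref{prop:conformal} together with \cite[Theorem 1.5]{Bourdon} shows that $\tilde h(\cdot,y)$ is Möbius for a.e.\ $y$, and hence for all $y$ by continuity.

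\textbf{Extension.} Finally, as in Remark~\ref{rem:othertubes}, I would extend each $\tilde h(\cdot,y)$ to the isometry $\tilde\imath(\cdot,y)$ of $\H^n$, which depends continuously on $y$. By uniqueness of the extension, $\tilde I(x,y)=(\tilde\imath(x,y),k(y))$ is equivariant. It descends to a leafwise isometry $I:\X_1\to\X_2$. The straight-line leafwise geodesic homotopy between $\tilde H$ and $\tilde I$, which stay at uniformly bounded distance from each other, is equivariant and integrable, so $I$ is integrable homotopic to $h$.
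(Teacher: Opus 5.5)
Your proposal is correct and takes essentially the paper's route. The corollary comes from Theorem~\ref{thm:Mostowskew}, which the paper proves by running the global-uniformisation version (Remarks~\ref{rem:tubes} and~\ref{rem:othertubes}) of the proof of Theorem~\ref{thm:Mostowsuspension}; your cocycle version of the Furstenberg--Moore ergodicity argument on $\Gamma_1\bs(\isomH\times G)$ spells out the ergodicity step the paper leaves implicit, and you should still say that you apply the argument to $h=\mathrm{id}_{\X}$ from the canonical metric to an arbitrary leafwise hyperbolic metric $g$, since its lifts to the slices $\H^n\times\{y\}$ can be uniformised continuously in $y$, making $(\X,g)$ skew for a new cocycle with $\theta=\mathrm{id}$ and $k=\mathrm{id}$.
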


\subsection*{Final remark.}
Our results raise two questions about Lie foliated spaces with hyperbolic dense leaves. The aim of the announced second part of this paper \cite{AMVMostowfoliations} is precisely to answer the first question:

%\begin{question} \label{ques:Mostowsolenoidal}
%Is Mostow rigidity valid for all Lie and Riemannian hyperbolic solenoidal manifolds of dimension $n \geq 3$?
%\end{question}

\begin{question} \label{ques:Mostowfoliations}
Is Mostow rigidity valid for all Lie foliations with dense hyperbolic leaves of dimension $n \geq 3$?
\end{question}

\noindent
The two main issues related to this question are the construction of the boundary map and the description of non-homogeneous examples from cocycles. 
%Although the first question can be answered by observing that Lie and Riemannian solenoidal manifolds always admit a global uniformisation, both questions will be derived from a common approach, different from the one used here. The idea is to directly construct Thurston's universal spheres for Lie and Riemannian foliations with dense leaves. 
\medskip 

The second question relates to the appropriate version of the Cartan-Hadarmard theorem for Lie solenoidal manifolds. More precisely, the following question is raised:

\begin{question} \label{ques:CartanHadamard}
Are all Lie solenoidal manifolds virtually skew?  
\end{question}

A forthcoming third part of this paper will focus on this question. A parallel discussion also applies to Riemannian foliations or those laminations whose holonomy pseudogroup is generated by the action of a group of isometries according to Definition~\ref{def:Lie}. 
%{\red Los comentarios previos son ciertos para la versión restrictiva de variedad solenoidal riemanniana (que usa isometrías globales y reduce el pseudogrupo de holonomía al pseudogrupo generado por la acción de un grupo de isometrías), pero no lo son si se usa la definición habitual (con isometrías locales)}


\begin{thebibliography}{10}

\bibitem{ADMV2}
Fernando Alcalde~Cuesta, Fran\c{c}oise Dal'Bo, Matilde Mart\'{\i}nez, and
  Alberto Verjovsky, \emph{Unique ergodicity of the horocycle flow on
  {R}iemannnian foliations}, Ergodic Theory and Dynamical Systems \textbf{40}
  (2020), no.~6, 1459--1479.

\bibitem{ALM2011}
Fernando Alcalde~Cuesta, \'Alvaro Lozano~Rojo, and Marta Macho~Stadler,
  \emph{Transversely {C}antor laminations as inverse limits}, Proc. Amer. Math.
  Soc. \textbf{139} (2011), no.~7, 2615--2630. \MR{2784831}

\bibitem{AMVMostowfoliations}
Fernando Alcalde~Cuesta, Matilde Mart\'{\i}nez, and Alberto Verjovsky,
  \emph{Mostow rigidity for hyperbolic {L}ie foliations}, 2026, Preprint.

\bibitem{AlvarezSmith2022}
S\'ebastien Alvarez and Graham Smith, \emph{Earthquakes and graftings of
  hyperbolic surface laminations}, Int. Math. Res. Not. IMRN (2022), no.~4,
  2824--2860. \MR{4381933}

\bibitem{Besson-Courtois-Gallot}
G\'erard Besson, Gilles Courtois, and Sylvestre Gallot, \emph{Minimal entropy
  and {M}ostow's rigidity theorems}, Ergodic Theory Dynam. Systems \textbf{16}
  (1996), no.~4, 623--649. \MR{1406425}

\bibitem{Bourdon}
Marc Bourdon, \emph{Quasi-conformal geometry and {M}ostow rigidity}, Institut
  Fourier, 2004.

\bibitem{BourdonMostow}
\bysame, \emph{Mostow type rigidity theorems}, Handbook of group actions.
  {V}ol. {IV}, Adv. Lect. Math. (ALM), vol.~41, Int. Press, Somerville, MA,
  2018, pp.~139--188. \MR{3888688}

\bibitem{BurgosVerjovsky2020}
Juan~Manuel Burgos and Alberto Verjovsky, \emph{Teichm\"uller theory of the
  universal hyperbolic lamination}, Ann. Acad. Sci. Fenn. Math. \textbf{45}
  (2020), no.~1, 577--599. \MR{4056553}

\bibitem{ClarkHurder}
Alex Clark and Steven Hurder, \emph{Homogeneous matchbox manifolds}, Trans.
  Amer. Math. Soc. \textbf{365} (2013), no.~6, 3151--3191. \MR{3034462}

\bibitem{Deroin}
Bertrand Deroin, \emph{Nonrigidity of hyperbolic surfaces laminations}, Proc.
  Amer. Math. Soc. \textbf{135} (2007), no.~3, 873--881. \MR{2262885}

\bibitem{Fedida}
Edmond Fedida, \emph{Sur les feuilletages de {L}ie}, C. R. Acad. Sci. Paris
  S\'er. A-B \textbf{272} (1971), A999--A1001. \MR{0285025}

\bibitem{Furstenberg2}
Harry Furstenberg, \emph{The unique ergodicity of the horocycle flow}, Recent
  advances in topological dynamics ({P}roc. {C}onf. {T}opological {D}ynamics,
  {Y}ale {U}niv., {N}ew {H}aven, {C}onn., 1972; in honor of {G}ustav {A}rnold
  {H}edlund), Lecture Notes in Math., vol. Vol. 318, Springer, Berlin-New York,
  1973, pp.~95--115. \MR{393339}

\bibitem{Ghys-delaHarpe}
\'Etienne Ghys and Pierre de~la Harpe, \emph{Sur les groupes hyperboliques
  d'apr\`es {M}ikhael {G}romov}, Progr. Math., vol.~83, Birkh\"auser Boston,
  Boston, MA, 1990. \MR{1086655}

\bibitem{Glasner}
Eli Glasner, \emph{Ergodic theory via joinings}, Mathematical Surveys and
  Monographs, vol. 101, American Mathematical Society, Providence, RI, 2003.
  \MR{1958753}

\bibitem{Godbillon}
Claude Godbillon, \emph{Feuilletages}, Progress in Mathematics, vol.~98,
  Birkh\"auser Verlag, Basel, 1991, {\'E}tudes g{\'e}om{\'e}triques. [Geometric
  studies], With a preface by G. Reeb. \MR{1120547}

\bibitem{Gromov}
Michael Gromov, \emph{Hyperbolic manifolds (according to {T}hurston and {J}\o
  rgensen)}, Bourbaki {S}eminar, {V}ol. 1979/80, Lecture Notes in Math., vol.
  842, Springer, Berlin, 1981, pp.~40--53. \MR{636516}

\bibitem{Haefligerclassifiant}
Andr\'e Haefliger, \emph{Groupo\"ides d'holonomie et classifiants}, no. 116,
  1984, Transversal structure of foliations (Toulouse, 1982), pp.~70--97.
  \MR{755163}

\bibitem{HectorBC}
Gilbert Hector, \emph{Groupo\"ides, feuilletages et {$C^*$}-alg\`ebres
  (quelques aspects de la conjecture de {B}aum-{C}onnes)}, Geometric study of
  foliations ({T}okyo, 1993), World Sci. Publ., River Edge, NJ, 1994,
  pp.~3--34. \MR{1363718}

\bibitem{Hector-HirschA}
Gilbert Hector and Ulrich Hirsch, \emph{Introduction to the geometry of
  foliations. {P}art {A}}, Aspects of Mathematics, vol.~1, Friedr. Vieweg \&
  Sohn, Braunschweig, 1981, Foliations on compact surfaces, fundamentals for
  arbitrary codimension, and holonomy. \MR{639738}

\bibitem{Hopf}
Eberhard Hopf, \emph{Fuchsian groups and ergodic theory}, Trans. Amer. Math.
  Soc. \textbf{39} (1936), no.~2, 299--314. \MR{1501848}

\bibitem{Lucker}
Adrien L{\"u}cker, \emph{Approaches to {M}ostow rigidity in hyperbolic space},
  Master Thesis EPFL, 2010.

\bibitem{Matsumotoequi}
Shigenori Matsumoto, \emph{The unique ergodicity of equicontinuous
  laminations}, Hokkaido Math. J. \textbf{39} (2010), no.~3, 389--403.
  \MR{2743829}

\bibitem{McCord}
M.~C. McCord, \emph{Inverse limit sequences with covering maps}, Trans. Amer.
  Math. Soc. \textbf{114} (1965), 197--209. \MR{173237}

\bibitem{Molinobook}
Pierre Molino, \emph{Riemannian foliations}, Progress in Mathematics, vol.~73,
  Birkh\"auser Boston, Inc., Boston, MA, 1988, Translated from the French by
  Grant Cairns, With appendices by Cairns, Y. Carri\`ere, \'E. Ghys, E. Salem
  and V. Sergiescu. \MR{932463}

\bibitem{Mostow}
G.~D. Mostow, \emph{Quasi-conformal mappings in {$n$}-space and the rigidity of
  hyperbolic space forms}, Inst. Hautes \'Etudes Sci. Publ. Math. (1968),
  no.~34, 53--104. \MR{236383}

\bibitem{MuhlyRenaultWilliams}
Paul~S. Muhly, Jean~N. Renault, and Dana~P. Williams, \emph{Equivalence and
  isomorphism for groupoid {$C^\ast$}-algebras}, J. Operator Theory \textbf{17}
  (1987), no.~1, 3--22. \MR{873460}

\bibitem{Prasad}
Gopal Prasad, \emph{Strong rigidity of {${\bf Q}$}-rank {$1$} lattices},
  Invent. Math. \textbf{21} (1973), 255--286. \MR{385005}

\bibitem{Reinhart}
Bruce~L. Reinhart, \emph{Foliated manifolds with bundle-like metrics}, Ann. of
  Math. (2) \textbf{69} (1959), 119--132. \MR{0107279}

\bibitem{Renault1982}
Jean~N. Renault, \emph{{$C\sp{\ast} $}-algebras of groupoids and foliations},
  Operator algebras and applications, {P}art 1 ({K}ingston, {O}nt., 1980),
  Proc. Sympos. Pure Math., vol.~38, Amer. Math. Soc., Providence, RI, 1982,
  pp.~339--350. \MR{679714}

\bibitem{Sacksteder}
Richard Sacksteder, \emph{Foliations and pseudogroups}, Amer. J. Math.
  \textbf{87} (1965), 79--102. \MR{0174061 (30 \#4268)}

\bibitem{Sullivan1981}
Dennis Sullivan, \emph{On the ergodic theory at infinity of an arbitrary
  discrete group of hyperbolic motions}, Riemann surfaces and related topics:
  {P}roceedings of the 1978 {S}tony {B}rook {C}onference ({S}tate {U}niv. {N}ew
  {Y}ork, {S}tony {B}rook, {N}.{Y}., 1978), Ann. of Math. Stud., vol. No. 97,
  Princeton Univ. Press, Princeton, NJ, 1981, pp.~465--496. \MR{624833}

\bibitem{Thurston2}
William~P. Thurston, \emph{Hyperbolic structures on 3-manifolds, ii: Surface
  groups and 3-manifolds which fiber over the circle},  (1986), \url{
  arXiv:math/9801045}.

\bibitem{ThurstonNewBook}
\bysame, \emph{The geometry and topology of three-manifolds. {V}ol. {IV}},
  American Mathematical Society, Providence, RI, 2022. \MR{4554426}

\bibitem{TukiaQuasiconformal}
Pekka Tukia, \emph{On quasiconformal groups}, J. Analyse Math. \textbf{46}
  (1986), 318--346. \MR{861709}

\bibitem{Vaisala}
Jussi V\"ais\"al\"a, \emph{Lectures on {$n$}-dimensional quasiconformal
  mappings}, Lecture Notes in Mathematics, vol. Vol. 229, Springer-Verlag,
  Berlin-New York, 1971. \MR{454009}

\bibitem{VerjovskySullivan}
Alberto Verjovsky, \emph{Low-dimensional solenoidal manifolds}, EMS Surv. Math.
  Sci. \textbf{10} (2023), no.~1, 131--178. \MR{4667418}

\bibitem{Zimmer}
R.~J. Zimmer, \emph{Ergodic theory and semisimple groups}, Monographs in
  Mathematics, vol.~81, Birkh\"auser Verlag, Basel, 1984. \MR{776417}

\end{thebibliography}
\end{document}